
\documentclass[11pt,leqno]{article}
\usepackage{amsmath,amsfonts,amscd,amsthm}
\oddsidemargin -.1in
\evensidemargin -.1in
\textwidth 6.6in
\topmargin -.50in
\textheight 9.4in

\begin{document}

\newcommand{\se}{\setcounter{equation}{0}}
\def\theequation{\thesection.\arabic{equation}}

\newtheorem{theorem}{Theorem}[section]
\newtheorem{cdf}{Corollary}[section]
\newtheorem{lemma}{Lemma}[section]
\newtheorem{remark}{Remark}[section]

\newcommand{\R}{\mathbb{R}}
\newcommand{\e}{{\bf e}}
\newcommand{\E}{{\bf E}}
\newcommand{\f}{{\bf f}}
\newcommand{\he}{\hat{\e}}
\newcommand{\hf}{\hat{\f}}
\newcommand{\te}{\tilde{\e}}

\newcommand{\bu}{{\bf u}}
\newcommand{\hbu}{\hat{\bu}}
\newcommand{\bv}{{\bf v}}
\newcommand{\bw}{{\bf w}}
\newcommand{\by}{{\bf y}}
\newcommand{\bz}{{\bf z}}
\newcommand{\bH}{{\bf H}}
\newcommand{\bJ}{{\bf J}}
\newcommand{\bL}{{\bf L}}

\newcommand{\bchi}{\mbox{\boldmath $\chi$}}
\newcommand{\bta}{\mbox{\boldmath $\eta$}}
\newcommand{\tbta}{{\tilde{\bta}}}
\newcommand{\bphi}{\mbox{\boldmath $\phi$}}
\newcommand{\bth}{\mbox{\boldmath $\theta$}}
\newcommand{\bxi}{\mbox{\boldmath $\xi$}}
\newcommand{\bzt}{\mbox{\boldmath $\zeta$}}

\newcommand{\hta}{\hat{\bta}}
\newcommand{\hth}{\hat{\bth}}
\newcommand{\hxi}{\hat{\bxi}}
\newcommand{\hzt}{\hat{\bzt}}

\newcommand{\td}{\tilde{\Delta}}

\newcommand{\bbu}{\bar{\bu}}
\newcommand{\bby}{\bar{\by}}
\newcommand{\bbz}{\bar{\bz}}

\title
{ \large\bf A two-level finite element method for time-dependent incompressible Navier-Stokes equations with non-smooth initial data}

\author{Deepjyoti Goswami\footnote{
Department of Mathematics, Universidade Federal do Paran\'a, Centro Polit\'ecnico, Curitiba,
Cx.P: 19081, CEP: 81531-990, PR, Brazil}~
 and Pedro D. Dam\'azio\footnote{
 Department of Mathematics, Universidade Federal do Paran\'a, Centro Polit\'ecnico, Curitiba,
 Cx.P: 19081, CEP: 81531-990, PR, Brazil}
}
\date{}
\maketitle

\begin{abstract}
In this article, we analyze a two-level finite element method for the two dimensional time-dependent incompressible Navier-Stokes equations with non-smooth initial data. It involves solving the non-linear Navier-Stokes problem on a coarse grid of size $H$ and solving a Stokes problem on a fine grid of size $h,~h<<H$. This method gives optimal convergence for velocity in $H^1$-norm and for pressure in $L^2$-norm. The analysis takes in to account the loss of regularity of the solution at $t=0$ of the Navier-Stokes equations.
\end{abstract}

\vspace{0.30cm} 
\noindent
{\bf Key Words}. Incompressible Navier-Stokes, non-smooth initial data, two-level method, error estimates.

\vspace{.2in}
\section{Introduction}
In this article, we consider a two-level semi-discrete finite element approximation to the 
two dimensional time-dependent incompressible Navier-Stokes equations
\begin{eqnarray}\label{nse}
 ~~\frac {\partial \bu}{\partial t}+\bu\cdot\nabla\bu-\nu\Delta\bu+\nabla p=\f(x,t),~~x\in \Omega,~t>0
\end{eqnarray}
with incompressibility condition
\begin{eqnarray}\label{ic}
 \nabla \cdot \bu=0,~~~x\in \Omega,~t>0,
\end{eqnarray}
and initial and boundary conditions
\begin{eqnarray}\label{ibc}
 \bu(x,0)=\bu_0~~\mbox {in}~\Omega,~~~\bu=0,~~\mbox {on}~\partial\Omega,~t\ge 0.
\end{eqnarray}
Here, $\Omega$ is a bounded domain in $\R^2$ with boundary $\partial \Omega$ and 
$\nu >0$ is the viscosity. $\bu$ and $p$ are the velocity field and pressure, respectively. And $\f$ is a given force field.

Two-level or two-grid methods are well-established and efficient methods for solving non-linear partial differential equations. Due to high computational cost of solving a non-linear problem on a fine grid, we solve the original problem on a coarse grid and update the solution by solving a linearized problem on a fine grid. In other words, in the first step, we discretize the non-linear PDE on a coarse mesh, of mesh-size $H$ and compute an approximate solution, say, $\bu_H$. Then, in the second step, we formulate a linearized problem, out of the original one, using $\bu_H$ and discretize it on a fine mesh, of mesh-size $h,~h<<H$, thereby, compute an approximate solution, say, $\bu^h$. With appropriate $h,H$, we obtain same order of convergence of the error $\bu-\bu^h$, as that of the error obtained by semi-discrete finite element Galerkin approximation on the find grid; but with far less computational cost, since, instead of solving a large non-linear system, we solve a small non-linear system and a large linear system.

In this article, we study the following two-level finite element approximation for the problem (\ref{nse})-(\ref{ibc}): First, we compute a semi-discrete Galerkin finite element approximations $(\bu_H,p_H)$, over a coarse mesh of mesh-size $H$. Then, we use the approximation $\bu_H$ to compute a semi-discrete Galerkin finite element approximations $(\bu^h,p^h)$ of the following Stokes problem:
\begin{align}\label{second}
\bu_t-\nu\Delta\bu+\bu_H\cdot\nabla\bu_H+\nabla p =\f,~~\nabla \cdot \bu =0,
~~\mbox{in}~\Omega
\end{align}
over a fine mesh of mesh-size $h<<H$.

The above algorithm is nothing new and in fact, this and similar algorithms have been studied on numerous occasions. But to the best of our knowledge, no study has been done for non-smooth initial data. Our main aim in this work is to do error analysis of the above mentioned two-level method, under non-smooth initial data.

Two-grid method was first introduced by Xu \cite{Xu94,Xu96} for semi-linear elliptic problems and by Layton {\it et.al} \cite{Lay93,LL95,LT98} for steady Navier-Stokes equations. It was carried out for time dependent Navier-Stokes by Girault and Raviart \cite{GL01} for semi-discrete case. The method may vary depending on the algorithm, by formulating different  linearize problem, to solve in the second step; like, in the case of Navier-Stokes, one can chose a Stokes problem or an Oseen problem or a Newton step to solve on the fine mesh. Several works in this direction, involving both semi-discrete and fully discrete analysis, can be found in \cite{AGS09, AS08, He04, HL11, HMR04, LH10, LH10a, LHL12, Ol99} and references therein.

The two-level methods are similar to non-linear Galerkin methods, post-processed and dynamical post-processed methods, in the sense that, all these method try to control the computational cost or efficiency by controlling the non-linear term, implementing similar ideas: solve the non-linear problem on a coarse grid and use that solution to solve a linearized problem on a fine grid. A discussion on this can be found out in \cite{FGN12}.

But in all these methods, including two-level, very few articles can be found to carry out the analysis, taking into account the loss of regularity of the solution of the Navier-Stokes equations at the initial time. Recently, two articles \cite{FGN12, FGN12a} have taken this into account. For example, both these articles assume no more than second-order spatial derivative of the velocity bounded in $\bL^2$, up to initial time $t=0$. Under realistic assumptions on the given data, the solution of the Navier-Stokes equations suffers from lack of regularity at $t=0$. For us to assume regularity at $t=0$, we need that the data satisfy some non-local compatibility conditions at $t=0$ (see \cite{HR82}). These conditions are very hard to verify even for simple models and hardly appear in physical context. We have avoided these conditions in our analysis and more. We have worked here with non-smooth initial data, that is, initial velocity $\bu_0$ belong to $\bH_0^1$. Now, even the second-order spatial derivative of the velocity is not bounded in $\bL^2$, up to initial time $t=0$. Lemma \ref{est.u} tells us that $\|\bu(t)\|_2\sim O(t^{-1/2})$. So, our assumption on the initial data forces us to have only the first-order spatial derivative of the velocity bounded in $\bL^2$ at $t=0$. This proves to be a bottle-neck in our error analysis.

Our work here closely resembles the work of \cite{He04} and we present here similar results that have been obtained in \cite{He04}, but with weaker condition on $\bu_0$. In that sense, this work is devoted to the technicalities required to deal with the non-smooth initial data.

Apart from the error analysis of this two-level method, we have also discussed the error analysis of finite element Galerkin method for (\ref{nse})-(\ref{ibc}). Although, these later results are discussed in \cite{HR82}, the results here vary slightly, due to non-smooth initial data. Also, the pressure estimate has been improved for conforming finite element, in the sense that, it now reads as $\sim O(t^{-1/2})$ rather than $\sim O(t^{-1})$. For smooth initial data, it can be shown to have no singularity at $t=0$ (using similar argument presented here, e.g., using Lemma \ref{E.neg} for pressure error), rather that the estimate 
$\sim O(t^{-1/2})$ in \cite{HR82}. Similar improvement is also realized in the two-level method. Theorem $5.3$ in \cite{He04} states that the error in velocity is $\sim O(H^2)$ and the error in pressure is $\sim O(t^{-1/2}H^2)$, when the initial data is smooth. Note that whereas the velocity error remains bounded at $t=0$, it is not so for the pressure error. In our case, we have shown that
velocity and pressure errors are $\sim O(t^{-1}H^2)$ (see Lemmas \ref{eeu} and \ref{prs}), when initial data is non-smooth. By following similar procedure, for smooth initial data, we can show that the errors, in fact, are $\sim O(H^2)$. This improves the presure error presented in \cite{He04}.

Finally, we observe that it is of practical importance to discretize in time and to implement the fully discretize scheme to verify the usefulness of the present method. In fact, computational results are shown in \cite{HMR04} to support the fact that this two-level method is better that the standard Galerkin finite element method. But the analysis presented there, for a fully discrete two-level method is for smooth initial data, that is, the initial velocity $\bu_0$ is in $\bH_0^1\cap\bH^2$. We will deal with non-smooth initial data for a fully discrete two-level method somewhere else.

In the following sections, we will assume $C$ and $K$ to be generic positive constants, where $K$ generally depends on the given data, that is, $\bu_0,\f$. And for the sake of convenience, we would write $\bv(t)$ as $\bv$ when there arises no confusion.

The article is organized as follows. In section $2$, we introduce some notations and preliminaries, and present {\it a priori} estimates and regularity results for the solution of (\ref{nse})-(\ref{ibc}), when the initial data is non-smooth. Section $3$ deals with finite element Galerkin approximations, whereas Section $4$ deals with the error analysis of the Galerkin approximation. The two-level method is briefly discussed in Section $5$ and its error analysis is carried out in Section $6$.

\section{Preliminaries}
\se

For our subsequent use, we denote by bold face letters the $\R^2$-valued
function space such as
\begin{align*}
 \bH_0^1 = [H_0^1(\Omega)]^2, ~~~ \bL^2 = [L^2(\Omega)]^2.
\end{align*}
Note that $\bH^1_0$ is equipped with a norm
$$ \|\nabla\bv\|= \big(\sum_{i,j=1}^{2}(\partial_j v_i, \partial_j
 v_i)\big)^{1/2}=\big(\sum_{i=1}^{2}(\nabla v_i, \nabla v_i)\big)^{1/2}. $$
Further, we introduce some divergence free function spaces:
\begin{align*}
\bJ_1 &= \{\bphi\in\bH_0^1 : \nabla \cdot \bphi = 0\} \\
\bJ= \{\bphi \in\bL^2 :\nabla \cdot \bphi &= 0~~\mbox{in}~~
 \Omega,\bphi\cdot{\bf n}|_{\partial\Omega}=0~~\mbox {holds weakly}\}, 
\end{align*}
where ${\bf n}$ is the outward normal to the boundary $\partial \Omega$ and $\bphi
\cdot {\bf n} |_{\partial\Omega} = 0$ should be understood in the sense of trace
in $\bH^{-1/2}(\partial\Omega)$, see \cite{Tm84}.
For any Banach space $X$, let $L^p(0, T; X)$ denote the space of measurable $X$
-valued functions $\bphi$ on  $ (0,T) $ such that
$$ \int_0^T \|\bphi (t)\|^p_X~dt <\infty~~~\mbox {if}~~1 \le p < \infty, $$
and for $p=\infty$
$$ {\displaystyle{ess \sup_{0<t<T}}} \|\bphi (t)\|_X <\infty~~~\mbox {if}~~p=\infty. $$
When there arises no confusion, we simply denote these spaces by $L^p(X)$.
Through out this paper, we make the following assumptions:\\
(${\bf A1}$). For ${\bf g} \in \bL^2$, let the unique pair of solutions $\{\bv
\in\bJ_1, q \in L^2 /\R\} $ for the steady state Stokes problem
\begin{align*}
 -\Delta\bv + \nabla q = {\bf g}, \\
 \nabla \cdot\bv = 0~~~\mbox {in}~~~\Omega,~~~~\bv|_{\partial\Omega}=0,
\end{align*}
satisfy the following regularity result
$$  \| \bv \|_2 + \|q\|_{H^1/\R} \le C\|{\bf g}\|. $$
\noindent
(${\bf A2}$). The initial velocity $\bu_0$ and the external force $\f$ satisfy for
positive constant $M_0,$ and for $T$ with $0<T \leq \infty$
$$ \bu_0\in\bJ_1,~\f,\f_t,\f_{tt} \in L^{\infty} (0, T ;\bL^2)~~~\mbox{with} ~~~
\|\bu_0\|_1 \le M_0,~~{\displaystyle{\sup_{0<t<T} }}\big\{\|\f\|,\|\f_t\|,
 \|\f_{tt}\|\big\} \le M_0. $$
With $P: \bL^2-\bJ$ as orthogonal projection and $\td=P(-\Delta):\bJ_1\cap \bH^2$ as Stokes operator, we first note that $({\bf A1})$ implies
$$ \|\bv\|_2 \le C\|\td\bv\|, ~~\bv\in\bJ_1\cap\bH^2. $$
And (see \cite[(2.4)]{HR82})
\begin{equation}\label{lambda1}
\|\bv\| \le \lambda_1^{-1/2}\|\bv\|_1 \le \lambda_1^{-1}\|\bv\|_2,~~~~\bv\in\bJ_1\cap\bH^2,
\end{equation}
where $\lambda_1>0$ to be the least eigenvalue of the Stokes operator.

\noindent Before going into the details, let us introduce the weak formulation of
(\ref{nse})-(\ref{ibc}). Find a pair of functions $\{\bu(t), p(t)\},~t>0,$ such that
\begin{eqnarray}\label{wfh}
 (\bu_t, \bphi) +\nu (\nabla \bu, \nabla \bphi)+ (\bu\cdot\nabla \bu, \bphi)
 &=& ( p, \nabla \cdot \bphi) + (\f,\bphi)~~~\forall \bphi \in \bH_0^1, \\
 (\nabla \cdot \bu, \chi) &=& 0 \;\;\; \forall \chi \in L^2. \nonumber
\end{eqnarray}
Equivalently, find  $\bu(t) \in {\bf J}_1,~t>0 $ such that
\begin{equation}\label{wfj}
 (\bu_t, \bphi) +\nu (\nabla \bu, \nabla \bphi )+( \bu \cdot \nabla \bu, \bphi)
=(\f,\bphi),~\forall\bphi
 \in {\bf J}_1.
\end{equation}
For existence and uniqueness and the regularity of the solution of the problem
(\ref{wfh}) or (\ref{wfj}), we refer to \cite{HR82}. For the sake of completeness, we present below the {\it a priori} estimates and higher-order estimates, which will be used in our error analysis. These vary slightly with the results presented in \cite{HR82} due to non-smooth initial data. And hence, we sketch a proof here.
\begin{lemma}\label{est.u}
Assume that $({\bf A1})$ and $({\bf A2})$ hold and let $0<\alpha < \nu\lambda_1$. Then, for some constant $K>0$, which depends only on the given data, the weak solution pair $(\bu,p)$ of (\ref{nse})-(\ref{ibc}) satisfies the following estimates:
\begin{eqnarray}
\|\bu(t)\|^2+e^{-2\alpha t}\int_0^t e^{2\alpha s} \|\bu(s)\|_1^2 ds \le K, \label{est.u1} \\
\|\bu(t)\|_1^2+e^{-2\alpha t}\int_0^t e^{2\alpha s} \big\{\|\bu(s)\|_2^2+\|\bu_s(s)\|^2\big\} ~ds \le K, \label{est.u2} \\
\tau^*(t)\big\{\|\bu_t(t)\|^2+\|\bu(t)\|_2^2+\|p(t)\|_1^2\big\}+e^{-2\alpha t}\int_0^t \sigma(s)\|\bu_s(s)\|_1^2 ds \le K, \label{est.u3} \\
(\tau^*(t))^2\|\bu_t(t)\|_1^2+e^{-2\alpha t}\int_0^t \sigma_1(s)\big\{\|\bu_s(s)\|_2^2 +\|\bu_{ss}(s)\|^2+\|p_s(s)\|_1^2\big\}~ds \le K, \label{est.u4} \\
(\tau^*(t))^{3/2}\big\{\|\bu_{tt}(t)\|+\|\bu_t(t)\|_2^2+\|p_t(t)\|_1^2\big\} \le K, \label{est.u5}
\end{eqnarray}
where $\tau^*(t)=\min\{1,t\},~\sigma(t)=\tau^*(t)e^{2\alpha t}$ and $\sigma_1(t)= (\tau^*(t))^2 e^{2\alpha t}$.
\end{lemma}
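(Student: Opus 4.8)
The plan is to derive these estimates in the order stated, via energy arguments in the weak formulation (\ref{wfj}) together with the duality/regularity assumption (${\bf A1}$), carefully tracking the exponential weights $e^{2\alpha t}$ and the powers of $\tau^*(t)$ that compensate for the singularity at $t=0$. For (\ref{est.u1}), I would take $\bphi=\bu$ in (\ref{wfj}), use the antisymmetry $(\bu\cdot\nabla\bu,\bu)=0$, and obtain $\frac12\frac{d}{dt}\|\bu\|^2+\nu\|\bu\|_1^2=(\f,\bu)$; bounding the right side by $\frac{\nu\lambda_1}{2}\|\bu\|^2 + C\|\f\|^2$ via Poincar\'e (\ref{lambda1}) and Young, then multiplying by $e^{2\alpha t}$ with $\alpha<\nu\lambda_1$, and integrating, gives (\ref{est.u1}) using $\|\bu_0\|\le\lambda_1^{-1/2}M_0$ and $\sup\|\f\|\le M_0$.

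For (\ref{est.u2}), I would take $\bphi=\td\bu$ (equivalently test the projected equation with $-\Delta\bu$ over $\bJ_1$), giving $\frac12\frac{d}{dt}\|\bu\|_1^2+\nu\|\td\bu\|^2 = -(\bu\cdot\nabla\bu,\td\bu)+(\f,\td\bu)$; the nonlinear term is controlled by the 2D inequality $\|\bu\cdot\nabla\bu\|\le C\|\bu\|^{1/2}\|\bu\|_1\|\td\bu\|^{1/2}$ (Ladyzhenskaya), absorbed into $\nu\|\td\bu\|^2$ after Young's inequality, leaving a term like $C\|\bu\|^2\|\bu\|_1^4$ which is integrable by (\ref{est.u1}); the $\|\bu_s\|^2$ bound then follows by reading $\bu_t=\nu\Delta\bu-\bu\cdot\nabla\bu-\nabla p+\f$ off the equation and using $\|\bu\|_2\le C\|\td\bu\|$ from (${\bf A1}$). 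Crucially, at this stage \emph{no} weight $\tau^*$ is needed because $\bu_0\in\bH_0^1$ gives $\|\bu(0)\|_1\le M_0$ — this is the place where non-smooth data is still admissible. For (\ref{est.u3})--(\ref{est.u5}), I would differentiate (\ref{wfj}) in time, test the differentiated equation with $\bu_t$, and now multiply by $\sigma(t)=\tau^*(t)e^{2\alpha t}$ to kill the $t^{-1}$-type blowup of $\|\bu_t\|$ near $0$; integrating and using $\frac{d}{dt}(\sigma\|\bu_t\|^2)$ produces the $\tau^*\|\bu_t(t)\|^2$ bound, then $\|\bu(t)\|_2$ follows from the elliptic estimate applied to the Stokes problem satisfied by $(\bu,p)$ at fixed $t$ with right-hand side $\f-\bu_t-\bu\cdot\nabla\bu$, and $\|p(t)\|_1$ from (${\bf A1}$)'s pressure bound. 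The pattern iterates: differentiate once more and weight by $\sigma_1=(\tau^*)^2e^{2\alpha t}$ for (\ref{est.u4}), and weight by $(\tau^*)^{3/2}$ and use interpolation/Agmon-type inequalities for the $L^\infty$-in-space-free $\bL^2$ estimate (\ref{est.u5}) on $\bu_{tt}$, $\bu_t$ in $\bH^2$, and $p_t$ in $H^1$.

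The main obstacle will be the bootstrapping near $t=0$ with the minimal regularity $\bu_0\in\bJ_1$: one must choose exactly the right power of $\tau^*(t)$ at each differentiation level so that the boundary term at $t=0$ vanishes while the weighted integrals on the right-hand side stay finite — this requires that each lower-order weighted estimate already proved feeds precisely into the next, and that the nonlinear terms $(\bu\cdot\nabla\bu_t,\bu_t)$, $(\bu_t\cdot\nabla\bu,\bu_t)$, etc., are estimated with the sharp 2D product inequalities so that the singular factors combine to integrable powers of $t$. A secondary subtlety is handling the pressure: since $p$ is recovered a posteriori, its estimates must come entirely from (${\bf A1}$) applied to the instantaneous Stokes system, so the regularity of $p$, $p_t$ is never better than that of $\bu_t$, $\bu_{tt}$ minus one derivative — which is why $\|p(t)\|_1\sim O(t^{-1/2})$ rather than $O(1)$, consistent with Lemma statement. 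Since the excerpt says only a sketch is intended, I would present the chain of weighted energy identities with the key inequalities invoked and relegate the routine Gronwall/Young manipulations to a reference to \cite{HR82}, emphasizing only the modifications forced by dropping the compatibility conditions.
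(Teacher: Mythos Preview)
Your overall strategy matches the paper's: weighted energy identities with test functions $\bu$, $\td\bu$, $\bu_t$, $\td\bu_t$, $\bu_{tt}$, increasing powers of $\tau^*(t)$ at each differentiation level, and the pressure recovered a posteriori from $({\bf A1})$.

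There is, however, a genuine gap in your treatment of (\ref{est.u2}). After testing with $\td\bu$ and applying Young's inequality you are left with
\[
\frac{d}{dt}\|\bu\|_1^2 + \nu\|\td\bu\|^2 \le C\|\f\|^2 + C\|\bu\|^2\|\bu\|_1^4,
\]
and you assert that the last term is ``integrable by (\ref{est.u1})''. It is not: (\ref{est.u1}) gives $\sup_t\|\bu(t)\|^2\le K$ and $e^{-2\alpha t}\int_0^t e^{2\alpha s}\|\bu(s)\|_1^2\,ds\le K$, but neither controls $\int_0^t\|\bu(s)\|_1^4\,ds$ without already knowing $\sup_t\|\bu\|_1<\infty$, which is precisely what you are trying to prove. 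If instead you apply standard Gronwall with $y=\|\bu\|_1^2$ and $g=C\|\bu\|^2\|\bu\|_1^2$, the factor $\exp\!\big(\int_0^t g\big)$ is bounded only by $\exp(CKe^{2\alpha t})$ via (\ref{est.u1}), which is not uniform in $t$ and therefore contradicts the lemma's requirement that $K$ depend only on the given data. The paper closes this gap by first recording the unweighted local estimate $\int_t^{t+t_0}\|\bu(s)\|_1^2\,ds\le K(t_0)$ (obtained by testing with $\bu$ on $[t,t+t_0]$) and then invoking the \emph{uniform} Gronwall lemma of Temam \cite[Lemma~1.1, p.~91]{Tm97} to conclude $\|\bu(t)\|_1\le K$ for all $t>0$; only afterwards is the $e^{2\alpha t}$-weighted integral of $\|\td\bu\|^2$ extracted. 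You need to insert this step explicitly.

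Once (\ref{est.u2}) is repaired in this way, the remaining levels (\ref{est.u3})--(\ref{est.u5}) proceed exactly as you describe (the paper uses $\sigma_2(t)=(\tau^*)^3e^{2\alpha t}$ and the twice-differentiated equation for (\ref{est.u5}), which is consistent with your bootstrapping picture), so the rest of your plan goes through.
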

\begin{proof}
We choose $\bphi=e^{2\alpha t}\bu= e^{\alpha t}\hbu$ in (\ref{wfj}) and use (\ref{lambda1}) to find
\begin{align}\label{est.u01}
\frac{1}{2}\frac{d}{dt}\|\hbu\|^2+\Big(\nu-\frac{\alpha}{\lambda_1}\Big)\|\hbu\|_1^2 \le
\frac{1}{\lambda_1^{1/2}}\|\hf\|\|\hbu\|_1.
\end{align}
Use kickback argument, then integrate and multiply by $e^{-2\alpha t}$ to obtain
\begin{align*}
\|\bu\|^2+\Big(\nu-\frac{\alpha}{\lambda_1}\Big)e^{-2\alpha t}\int_0^t \|\hbu(s)\|_1^2 ds \le
e^{-2\alpha t}\|\bu_0\|^2+\frac{(1-e^{-2\alpha t})}{2\alpha(\nu\lambda_1-\alpha)}
\|\f\|_{\infty}^2 =:K,
\end{align*}
where $\|\f\|_{\infty}=\|\f\|_{L^{\infty}(\bL^2)}$. \\
Now choose $\bphi=\bu$ in (\ref{wfj}) and integrate from $t$ to $t+t_0$ for some fix $t_0>0$.
\begin{align}\label{est.u02}
\|\bu(t+t_0)\|^2+\int_t^{t+t_0} \|\bu(s)\|_1^2 ds \le \|\bu(t)\|^2+\frac{\lambda_1t_0}{\nu}
\|\f\|_{\infty}^2 \le K(t_0).
\end{align}
Next, we choose $\bphi=\td\bu$ in (\ref{wfj}) to find (see \cite[(2.11)]{HR82})
\begin{align}\label{est.u03}
 \frac{d}{dt}\|\bu\|_1^2+\nu\|\td\bu\|^2 \le C\|\f\|^2+C\|\bu\|^2\|\bu\|_1^4.
\end{align}
Keeping in mind (\ref{est.u02}), we apply uniform Gronwall lemma (see \cite[Lemma 1.1, pp.
91]{Tm97}) to observe that for fixed $t_0>0$
$$ \|\bu(t+t_0)\|_1 \le K,~~t>0. $$
Multiply (\ref{est.u03}) by $e^{2\alpha t}$, integrate with respect to time, use the above estimate to conclude the first part of (\ref{est.u2}). Choose $\bphi=e^{2\alpha t}\bu_t$ for the rest of (\ref{est.u2}). \\
For next estimate, we differentiate (\ref{wfj}) with respect to time and put $\bphi=\sigma(t)
\bu_t$ to get
\begin{align*}
\frac{d}{dt}\big\{\sigma(t)\|\bu_t\|^2\big\}+2\nu\sigma(t)\|\bu_t\|_1^2 =\sigma_t(t) \|\bu_t\|^2-\sigma(t)(\bu_t\cdot\nabla \bu,\bu_t)+\sigma(t)(\f_t,\bu_t).
\end{align*}
Observe that
$$ (\bu_t\cdot\nabla \bu,\bu_t) \le \|\bu_t\|_{\bL^4(\Omega)}^2\|\bu\|_1 \le C\|\bu_t\|
\|\bu_t\|_1\|\bu\|_1. $$
And therefore, we have
\begin{align*}
\frac{d}{dt}\big\{\sigma(t)\|\bu_t\|^2\big\}+\nu\sigma(t)\|\bu_t\|_1^2 \le Ce^{2\alpha t} \|\bu_t\|^2(1+\|\bu\|_1^2)+\sigma(t)\|\f_t\|^2.
\end{align*}
Integrate with respect to time and use (\ref{est.u2}) to conclude part of (\ref{est.u3}). The rest of it can be proved by using the equations (\ref{wfj}) and (\ref{wfh}).\\
Put $\bphi=\sigma_1(t)\td\bu_t$ and $\bphi=\sigma_1(t)\bu_{tt}$, respectively, after differentiating (\ref{wfj}) and proceed in similar fashion as above to find
($\sigma_1(t)=(\tau^*(t))^2e^{2\alpha t}$)
\begin{align}\label{est.u04}
(\tau^*(t))^2\|\bu_t(t)\|_1^2+e^{-2\alpha t}\int_0^t \sigma_1(s)\big\{\|\bu_s(s)\|_2^2 +\|\bu_{ss}(s)\|^2\big\}~ds \le K.
\end{align}
Differentiate the equation (\ref{wfh}) and use (\ref{est.u04}) to obtain the pressure  estimate of (\ref{est.u4}). \\
Finally, we differentiate (\ref{wfj}) twice, with respect to time. First, put $\bphi=\sigma_2(t)\bu_{tt},~\sigma_2(t)=(\tau^*(t))^3e^{2\alpha t}$ and use (\ref{est.u4}) to obtain
$$ (\tau^*(t))^3\|\bu_{tt}(t)\|_1^2+e^{-2\alpha t}\int_0^t \sigma_2(s)\|\bu_{ss}(s)\|_1^2 ds \le K. $$
And then use the double differentiated equation and the equation (\ref{wfh}) (after double differentiation) with the above obtained estimate to conclude (\ref{est.u5}) and this completes the rest of the proof.
\end{proof}
\noindent
We present below Gronwall's Lemma, which will be used subsequently.
\begin{lemma} [Gronwall's Lemma]
 Let $g,h,y$ be three locally integrable non-negative functions  on the time
 interval $[0,\infty)$ such that for all $t\ge 0$
 $$  y(t)+G(t)\le C+\int_0^t h(s)~ds+\int_0^t g(s)y(s)~ds, $$
 where $G(t)$ is a non-negative function on $[0,\infty)$ and  $C\ge 0$ is a
 constant. Then,
 $$ y(t)+G(t)\le{\Big(}C+\int_0^t h(s)~ds{\Big)}exp{\Big(}\int_0^t g(s)~ds{\Big)}. $$
\end{lemma}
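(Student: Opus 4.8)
The statement is the standard Gronwall inequality augmented with the extra non-negative term $G(t)$, and the plan is to reduce it to the classical differential form. First I would introduce the right-hand side of the hypothesis as an auxiliary function,
\[
\phi(t) := C + \int_0^t h(s)\,ds + \int_0^t g(s)\,y(s)\,ds ,
\]
so that the assumption reads simply $y(t)+G(t)\le\phi(t)$ for all $t\ge 0$. Since $G\ge 0$ this gives $y(t)\le\phi(t)$, and since $g\ge 0$ we may insert this pointwise bound into the integrand to obtain $g(t)y(t)\le g(t)\phi(t)$ for a.e.\ $t$. (For the integral inequality to be meaningful one implicitly has $gy\in L^1_{loc}$, so that $\phi$, being a constant plus indefinite integrals of $L^1_{loc}$ functions, is locally absolutely continuous with $\phi'(t)=h(t)+g(t)y(t)$ for a.e.\ $t$; hence $\phi'(t)\le h(t)+g(t)\phi(t)$ a.e.)

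Next I would multiply this differential inequality by the integrating factor $\mu(t):=\exp\!\big(-\int_0^t g(s)\,ds\big)$, which is positive, locally absolutely continuous, satisfies $\mu'=-g\mu$ a.e., and obeys $0<\mu\le 1$ because $g\ge 0$. Then, for a.e.\ $t$,
\[
\frac{d}{dt}\big(\mu(t)\phi(t)\big)=\mu(t)\big(\phi'(t)-g(t)\phi(t)\big)\le\mu(t)h(t)\le h(t),
\]
using $h\ge 0$ in the last step. Integrating over $[0,t]$ and using $\mu(0)=1$, $\phi(0)=C$ yields $\mu(t)\phi(t)\le C+\int_0^t h(s)\,ds$, i.e.
\[
\phi(t)\le\Big(C+\int_0^t h(s)\,ds\Big)\exp\!\Big(\int_0^t g(s)\,ds\Big),
\]
and combining this with $y(t)+G(t)\le\phi(t)$ gives precisely the claimed bound.

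I do not expect a genuine difficulty; the only item requiring mild care is the regularity bookkeeping — checking that $\phi$ and $\mu\phi$ are locally absolutely continuous so that the product rule and the fundamental theorem of calculus may be applied with only $L^1_{loc}$ data. If one wishes to sidestep almost-everywhere differentiation, an alternative is a Picard-type iteration: repeatedly substituting $y(s)\le\phi(s)$ back into the definition of $\phi$ produces, after $n$ steps, the bound $\phi(t)\le\big(C+\int_0^t h\big)\sum_{k=0}^{n}\frac{1}{k!}\big(\int_0^t g\big)^{k}$ together with a remainder term that tends to $0$ as $n\to\infty$; letting $n\to\infty$ and summing the exponential series gives the same conclusion.
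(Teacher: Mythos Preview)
Your argument is correct and is the standard integrating-factor proof of Gronwall's inequality; the regularity bookkeeping you flag (local absolute continuity of $\phi$ and of $\mu\phi$, product rule a.e.) is exactly the right caveat, and your iteration alternative is also valid. The paper itself does not supply a proof of this lemma --- it is quoted as a classical tool and left unproved --- so there is nothing to compare against. Your write-up would serve perfectly well as a proof if one were required.
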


\section{Classical Galerkin Method}
\se

From now on, we denote $h$ with $0<h<1$ to be a real positive discretization
parameter tending to zero. Let  $\bH_h$ and $L_h$, $0<h<1$ be two family of
finite dimensional subspaces of $\bH_0^1 $ and $L^2/\R$, respectively,
approximating velocity vector and the pressure. Assume that the following
approximation properties are satisfied for the spaces $\bH_h$ and $L_h$: \\
${\bf (B1)}$ For each $\bw \in\bH_0^1 \cap \bH^2 $ and $ q \in
H^1/\R$ there exist approximations $i_h w \in \bH_h $ and $ j_h q \in
L_h $ such that
$$ \|\bw-i_h\bw\|+ h\|\nabla (\bw-i_h\bw)\| \le Ch^2 \| \bw\|_2,
 ~~~~\|q-j_h q\|\le Ch\|q\|_1. $$
Further, suppose that the following inverse hypothesis holds for $\bw_h\in\bH_h$:
\begin{align}\label{inv.hypo}
\|\nabla \bw_h\| \le Ch^{-1} \|\bw_h\|.
\end{align}
To define the Galerkin approximations, we set for $\bv, \bw, \bphi \in \bH_0^1$,
$$ a(\bv, \bphi) = (\nabla \bv, \nabla \bphi) $$
and
$$ b(\bv, \bw,\bphi)= \frac{1}{2} (\bv \cdot \nabla \bw , \bphi)
   - \frac{1}{2} (\bv \cdot \nabla \bphi, \bw). $$
Note that the operator $b(\cdot, \cdot, \cdot)$ preserves the antisymmetric property of
the original nonlinear term, that is,
$$ b(\bv_h, \bw_h, \bw_h) = 0 \;\;\; \forall \bv_h, \bw_h \in {\bH}_h. $$
The discrete analogue of the weak formulation (\ref{wfh}) now reads as: Find $\bu_h(t)
\in \bH_h$ and $p_h(t) \in L_h$ such that $ \bu_h(0)= \bu_{0h} $ and for $t>0$
\begin{eqnarray}\label{dwfh}
(\bu_{ht}, \bphi_h) +\nu a (\bu_h,\bphi_h) &+& b(\bu_h,\bu_h,\bphi_h) -(p_h, \nabla \cdot \bphi_h) =(\f, \bphi_h), \nonumber \\
&&(\nabla \cdot \bu_h, \chi_h) =0,
\end{eqnarray}
for $\bphi_h\in\bH_h,~\chi_h \in L_h$. Here $\bu_{0h} \in\bH_h $ is a suitable 
approximation of $\bu_0\in\bJ_1$.

\noindent In order to consider a discrete space analogous to $\bJ_1$, we
impose the discrete incompressibility condition on $\bH_h$ and call it as
$\bJ_h$. Thus, we define $\bJ_h,$ as
$$ {\bf J}_h = \{ v_h \in \bH_h : (\chi_h,\nabla\cdot v_h)=0
 ~~~\forall \chi_h \in L_h \}. $$
Note that $\bJ_h$ is not a subspace of $\bJ_1$. With $\bJ_h$ as above, we now introduce
an equivalent Galerkin formulation as: Find $\bu_h(t)\in {\bf J}_h $ such that $\bu_h(0) =
\bu_{0h} $ and for $t>0$
\begin{equation}\label{dwfj}
~~~~ (\bu_{ht},\bphi_h) +\nu a (\bu_h,\bphi_h)= -b( \bu_h, \bu_h, \bphi_h) +(\f,\bphi_h)~~\forall \bphi_h \in \bJ_h.
\end{equation}
Since $\bJ_h$ is finite dimensional, the problem (\ref{dwfj}) leads to a system of
nonlinear  ordinary differential equations. For global existence of a unique solution
of (\ref{dwfj}) (or unique solution pair of (\ref{dwfh})), we again refer to \cite{HR82}.

For continuous dependence of the discrete pressure $p_h (t) \in L_h$ on the
discrete velocity $u_h(t) \in {\bf J}_h$, we assume the following discrete
inf-sup (LBB) condition on the finite dimensional spaces $\bH_h$ and $L_h$:\\
\noindent
${\bf (B2')}$  For every $q_h \in L_h$, there exists a non-trivial function
$\bphi_h \in \bH_h$ and a positive constant $K_0,$ independent of $h,$
such that
$$ |(q_h, \nabla\cdot \bphi_h)| \ge C\|\nabla \bphi_h \|\| q_h\|_{L^2/\R}. $$
Moreover, we also assume that the following approximation property holds true
for ${\bf J}_h $. \\
\noindent
${\bf (B2)}$ For every $\bw \in {\bf J}_1 \cap \bH^2, $ there exists an
approximation $r_h \bw \in {\bf J_h}$ such that
$$ \|\bw-r_h\bw\|+h \| \nabla (\bw - r_h \bw) \| \le Ch^2 \|\bw\|_2 . $$
The $L^2$ projection $P_h:\bL^2\mapsto \bJ_h$ satisfies the following properties
(see \cite{HR82}): for $\bphi\in \bJ_h$,
\begin{equation}\label{ph1}
 \|\bphi- P_h \bphi\|+ h \|\nabla P_h \bphi\| \leq C h\|\nabla \bphi\|,
\end{equation}
and for $\bphi \in \bJ_1 \cap \bH^2,$
\begin{equation}\label{ph2}
 \|\bphi-P_h\bphi\|+h\|\nabla(\bphi-P_h \bphi)\|\le C h^2\|\td\bphi\|.
\end{equation}
We now define the discrete  operator $\Delta_h: \bH_h \mapsto \bH_h$ through the
bilinear form $a (\cdot, \cdot)$ as
\begin{eqnarray}\label{do}
 a(\bv_h, \bphi_h) = (-\Delta_h\bv_h, \bphi)~~~~\forall \bv_h, \bphi_h\in\bH_h.
\end{eqnarray}
Set the discrete analogue of the Stokes operator $\td =P(-\Delta) $ as
$\td_h = P_h(-\Delta_h) $. Note that the $\td_h$ restricted to $\bJ_h$ is invertible and we denote its inverse by $(\td_h)^{-1}$; for details, see \cite{HR82, HR90}. Following \cite{HR90}, we define {\it discrete} Sobolev norm as:
$$ \|\bv_h\|_r = \|(-\td_h)^{r/2}\bv_h\|,~~\mbox{for }\bv_h\in\bJ_h,~r\in\R. $$
Using Sobolev imbedding and Sobolev inequality, it is
easy to prove the following Lemma (similar to \cite[$(3.4)$]{HR90}).
\begin{lemma}\label{nonlin}
Suppose conditions (${\bf A1}$), (${\bf B1}$) and (${\bf  B2}$) are satisfied. Then there 
exists a positive constant $K$ such that for $\bv,\bw,\bphi\in\bH_h$, the following holds:
\begin{equation}\label{nonlin1}
 |(\bv\cdot\nabla\bw,\bphi)| \le K \left\{
\begin{array}{l}
 \|\bv\|^{1/2}\|\nabla\bv\|^{1/2}\|\nabla\bw\|^{1/2}\|\Delta_h\bw\|^{1/2}
 \|\bphi\|, \\
 \|\bv\|^{1/2}\|\Delta_h\bv\|^{1/2}\|\nabla\bw\|\|\bphi\|, \\
 \|\bv\|^{1/2}\|\nabla\bv\|^{1/2}\|\nabla\bw\|\|\bphi\|^{1/2}
 \|\nabla\bphi\|^{1/2}, \\
 \|\bv\|\|\nabla\bw\|\|\bphi\|^{1/2}\|\Delta_h\bphi\|^{1/2}, \\
 \|\bv\|\|\nabla\bw\|^{1/2}\|\Delta_h\bw\|^{1/2}\|\bphi\|^{1/2}
 \|\nabla\bphi\|^{1/2}
\end{array}\right.
\end{equation}
\end{lemma}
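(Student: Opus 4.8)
The plan is to derive each of the five bounds by pairing H\"older's inequality with two families of interpolation estimates. The first is the classical two-dimensional Ladyzhenskaya inequality $\|\bchi\|_{\bL^4(\Omega)}\le C\|\bchi\|^{1/2}\|\nabla\bchi\|^{1/2}$, valid for every $\bchi\in\bH_0^1$, hence in particular on $\bH_h$. The second consists of the two \emph{discrete} interpolation inequalities
\begin{align*}
\|\nabla\bchi_h\|_{\bL^4(\Omega)}&\le C\,\|\nabla\bchi_h\|^{1/2}\,\|\Delta_h\bchi_h\|^{1/2},\\
\|\bchi_h\|_{\bL^\infty(\Omega)}&\le C\,\|\bchi_h\|^{1/2}\,\|\Delta_h\bchi_h\|^{1/2},\qquad \bchi_h\in\bH_h,
\end{align*}
which play, for finite element functions, the role of the two-dimensional Agmon-type inequalities $\|\bz\|_{\bL^\infty}\le C\|\bz\|^{1/2}\|\bz\|_2^{1/2}$ and $\|\nabla\bz\|_{\bL^4}\le C\|\nabla\bz\|^{1/2}\|\bz\|_2^{1/2}$ available for $\bz\in\bH_0^1\cap\bH^2$. (These discrete inequalities may also simply be quoted from \cite{HR90}.)

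I would prove the two discrete inequalities by comparison with an auxiliary continuous problem. Given $\bchi_h\in\bH_h$, let $\bz\in\bH_0^1\cap\bH^2$ solve $-\Delta\bz=-\Delta_h\bchi_h$ with homogeneous Dirichlet data; elliptic regularity gives $\|\bz\|_2\le C\|\Delta_h\bchi_h\|$, and the definition (\ref{do}) of $\Delta_h$ shows that $\bchi_h$ is precisely the Ritz projection of $\bz$ onto $\bH_h$, so (${\bf B1}$) yields $\|\bz-\bchi_h\|+h\|\nabla(\bz-\bchi_h)\|\le Ch^2\|\bz\|_2$. Writing $\bchi_h=(\bchi_h-i_h\bz)+i_h\bz$, one estimates the ``smooth part'' $\bz$ by the two-dimensional Agmon inequalities recalled above, estimates the finite element discrepancy $\bchi_h-i_h\bz\in\bH_h$ via the inverse hypothesis (\ref{inv.hypo}), controls the interpolation error $\bz-i_h\bz$ in $\bL^4$ and $\bL^\infty$ by standard estimates, and finally absorbs the spurious powers of $h$ using the elementary bounds $\|\nabla\bchi_h\|^2\le\|\Delta_h\bchi_h\|\,\|\bchi_h\|$, $h\|\Delta_h\bchi_h\|\le C\|\nabla\bchi_h\|$ and $h^2\|\Delta_h\bchi_h\|\le C\|\bchi_h\|$, all immediate from (\ref{do}) and (\ref{inv.hypo}).

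Granting these, the lemma is pure bookkeeping: one applies H\"older's inequality to $(\bv\cdot\nabla\bw,\bphi)$ with the splittings $\bL^4\times\bL^4\times\bL^2$, $\bL^\infty\times\bL^2\times\bL^2$, $\bL^4\times\bL^2\times\bL^4$, $\bL^2\times\bL^2\times\bL^\infty$ and $\bL^2\times\bL^4\times\bL^4$ acting on $(\bv,\nabla\bw,\bphi)$, and then inserts Ladyzhenskaya (for the $\bL^4$ norms of $\bv$ and of $\bphi$), the first discrete inequality (for the $\bL^4$ norm of $\nabla\bw$), and the second discrete inequality (for the $\bL^\infty$ norms of $\bv$ and of $\bphi$); this produces, respectively, the five bounds in (\ref{nonlin1}). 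The only genuine obstacle is the middle step, i.e.\ establishing the two discrete interpolation inequalities, which hinges entirely on the interplay of elliptic regularity, the approximation property (${\bf B1}$) and the inverse hypothesis (\ref{inv.hypo}) (together with the borderline two-dimensional embeddings $\bH^2\hookrightarrow\bL^\infty$ and the $\bL^4$ bound on $\nabla\bH^2$); once that is in place, nothing beyond H\"older and the stated interpolation inequalities is required.
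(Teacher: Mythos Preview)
Your proposal is correct and follows exactly the route the paper indicates: the paper does not give a detailed proof but simply says the lemma follows from ``Sobolev imbedding and Sobolev inequality'' and refers to \cite[(3.4)]{HR90}, which is precisely the combination of Ladyzhenskaya, the discrete Agmon-type bounds, and H\"older splittings that you spell out. Your sketch of the discrete interpolation inequalities via the auxiliary Poisson problem and inverse estimates is also the standard argument behind \cite{HR90}, so there is nothing to add.
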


\noindent Examples of subspaces $\bH_h$ and $L_h$ satisfying assumptions (${\bf B1}$)
and (${\bf B2}'$) are abundant in literature, for example, see \cite{BF, BP, GR}.

\noindent We present below a couple of lemmas, one dealing with {\it a priori} and regularity estimates of $\bu_h$ and the other, with higher-order regularity results. The proof is similar to that of Lemma \ref{est.u}.
\begin{lemma}\label{est.uh}
Under the assumptions of Lemmas \ref{est.u} and \ref{nonlin}, the semi-discrete Galerkin approximation $\bu_h$ of the velocity $\bu$ satisfies, for $t>0,$
\begin{eqnarray}
\|\bu_h(t)\|^2+e^{-2\alpha t}\int_0^t e^{2\alpha s}\|\bu_h(s)\|_1^2~ds \le K, \label{uh01} \\
\|\bu_h(t)\|_1^2+e^{-2\alpha t}\int_0^t e^{2\alpha s}\big\{\|\bu_h(s)\|_2^2+\|\bu_{h,s}(s)\|^2 \big\}~ds \le K, \label{uh02} \\
(\tau^*(t))^{1/2}\big\{\|\bu_h(t)\|_2+\|p\|_{H^1/\R}\} \le K, \label{uh03}
\end{eqnarray}
where $\tau^*(t)= \min \{1,t\}$ and $K$ depends only on the given data. In particular, $K$ is independent of $h$ and $t$.
\end{lemma}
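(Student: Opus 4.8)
The plan is to mimic the energy-method proof of Lemma \ref{est.u}, replacing the test functions $\bphi$ drawn from $\bJ_1$ by discrete test functions $\bphi_h\in\bJ_h$ in the discrete weak formulation (\ref{dwfj}), and using the antisymmetry $b(\bv_h,\bw_h,\bw_h)=0$ to kill the trilinear term exactly as $(\bu\cdot\nabla\bu,\bu)=0$ was used in the continuous case. First I would choose $\bphi_h=e^{2\alpha t}\bu_h=e^{\alpha t}\hbu_h$ in (\ref{dwfj}); since $b(\bu_h,\bu_h,\hbu_h)=0$, and since the discrete Poincar\'e inequality $\|\bphi_h\|\le\lambda_1^{-1/2}\|\nabla\bphi_h\|$ holds on $\bJ_h$ (the least eigenvalue of $\td_h$ is bounded below by $\lambda_1$, see \cite{HR82}), I obtain the analogue of (\ref{est.u01}), then apply the kickback argument (using $0<\alpha<\nu\lambda_1$), integrate, and multiply by $e^{-2\alpha t}$ to get (\ref{uh01}), provided $\|\bu_{0h}\|\le\|\bu_0\|\le C M_0$ — which holds for the natural choices $\bu_{0h}=P_h\bu_0$ or the Stokes/Ritz projection. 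The local-in-time bound $\int_t^{t+t_0}\|\bu_h\|_1^2\,ds\le K(t_0)$ follows by taking $\bphi_h=\bu_h$ and integrating over $[t,t+t_0]$.

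For (\ref{uh02}) I would take $\bphi_h=e^{2\alpha t}\td_h\bu_h=-e^{2\alpha t}\Delta_h\bu_h$ (legitimate since $\td_h\bu_h\in\bJ_h$), obtaining a discrete version of (\ref{est.u03}),
\begin{align*}
\frac{d}{dt}\|\nabla\bu_h\|^2+\nu\|\td_h\bu_h\|^2\le C\|\f\|^2+C\|\bu_h\|^2\|\nabla\bu_h\|^4,
\end{align*}
where the cubic term is controlled by estimate (\ref{nonlin1}) of Lemma \ref{nonlin} applied to $b(\bu_h,\bu_h,\td_h\bu_h)$ together with the identity $\|\bu_h\|_2\le C\|\td_h\bu_h\|$ on $\bJ_h$. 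Combined with the local bound on $\int\|\nabla\bu_h\|^2$, the uniform Gronwall lemma (\cite[Lemma 1.1]{Tm97}) gives $\|\nabla\bu_h(t+t_0)\|\le K$ for every fixed $t_0>0$; multiplying the differential inequality by $e^{2\alpha t}$, integrating, and invoking the earlier estimates and the bound on $\|\nabla\bu_{0h}\|$ yields the first part of (\ref{uh02}), while $\bphi_h=e^{2\alpha t}\bu_{h,t}$ (again $b(\bu_h,\bu_h,\bu_{h,t})$ is estimated via Lemma \ref{nonlin}) gives the $\|\bu_{h,s}\|^2$ term. Finally, for (\ref{uh03}) I would read off $\|\bu_h(t)\|_2\le C\|\td_h\bu_h(t)\|$ from the differential inequality by multiplying by $\sigma(t)=\tau^*(t)e^{2\alpha t}$ and integrating (this uses the first part of (\ref{uh02})), and obtain the discrete pressure bound $\|p_h\|_{H^1/\R}$ from the discrete inf-sup condition $({\bf B2'})$ applied to (\ref{dwfh}), bounding $\|p_h\|_{L^2/\R}\le C(\|\bu_{h,t}\|+\nu\|\nabla\bu_h\|+\|b(\bu_h,\bu_h,\cdot)\|+\|\f\|)$ and using $\tau^*(t)^{1/2}\|\bu_{h,t}\|\le K$, which comes from the $e^{-2\alpha t}\int e^{2\alpha s}\|\bu_{h,s}\|^2\,ds$ bound essentially as in the $\bphi=\sigma(t)\bu_t$ step of Lemma \ref{est.u}.

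The main obstacle I expect is twofold. First, one must verify that the chosen initial approximation $\bu_{0h}$ satisfies $\|\bu_{0h}\|_1\le C\|\bu_0\|_1\le CM_0$ \emph{uniformly in $h$}, so that all the constants $K$ are genuinely $h$-independent; for $\bu_0\in\bJ_1$ (non-smooth data, only $\bH^1$ regularity) one cannot use (\ref{ph2}) but must rely on the $H^1$-stability of $P_h$ or of the discrete Stokes projection, which is where the hypotheses $({\bf B1})$, $({\bf B2})$ and the inverse hypothesis (\ref{inv.hypo}) enter. Second, and more delicate, the trilinear estimates of Lemma \ref{nonlin} are stated with the $\Delta_h$-based discrete norms rather than the genuine $H^2$ norm, so every place where the continuous proof silently used $\|\bu\|_2$ must be re-done with $\|\Delta_h\bu_h\|=\|\td_h\bu_h\|$ and the norm equivalence on $\bJ_h$; the bookkeeping to ensure the cubic term $\|\bu_h\|^2\|\nabla\bu_h\|^4$ can be absorbed after the uniform Gronwall step, and that the singular weights $\tau^*(t)$ appear with exactly the right powers, is where the real work lies. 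Everything else is a routine transcription of the proof of Lemma \ref{est.u}.
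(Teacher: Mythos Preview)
Your proposal is correct and is exactly what the paper does: the paper gives no explicit proof of Lemma \ref{est.uh}, stating only that ``The proof is similar to that of Lemma \ref{est.u},'' and your outline carries out precisely that transcription, with the right test functions ($e^{2\alpha t}\bu_h$, $\td_h\bu_h$, $e^{2\alpha t}\bu_{h,t}$, then the $\sigma(t)$-weighted choices), the antisymmetry $b(\bu_h,\bu_h,\bu_h)=0$, the discrete trilinear bounds of Lemma \ref{nonlin}, and the uniform Gronwall step. The two caveats you flag (the $H^1$-stability of $\bu_{0h}=P_h\bu_0$ uniformly in $h$, and the systematic replacement of $\|\cdot\|_2$ by $\|\td_h\cdot\|$) are the only points requiring care, and both are handled by the standing assumptions $({\bf B1})$--$({\bf B2})$ together with the discrete-norm machinery of \cite{HR82,HR90}.
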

\begin{lemma}\label{est1.uh}
Under the assumptions of Lemma \ref{est.uh}, the semi-discrete Galerkin approximation $\bu_h$ of the velocity $\bu$ satisfies, for $t>0$ and for $r\in \{0,1\},~i\in \{1,2\},~r+i \le 2$,
\begin{equation}\label{1uh01}
(\tau^*(t))^{r+2i-1}\|D_t^i\bu_h(t)\|_r^2+e^{-2\alpha t}\int_0^t (\tau^*(s))^{r+2i-1}
e^{2\alpha s} \|D_s^i\bu_h(s)\|_{r+1}^2 ~ds \le K,
\end{equation}
where $D_t^i=\frac{\partial^i}{\partial t^i}$. And
\begin{eqnarray}
e^{-2\alpha t}\int_0^t (\tau^*(s))^2 e^{2\alpha s}\{\|\bu_{h,ss}(s)\|^2 +\|p_{h,s}(s)\|_{H^1/\R}^2\}~ds \le K, \label{1uh02} \\
(\tau^*(t))^{3/2}\big\{\|\bu_{ht}\|_2+\|p_{ht}(t)\|_1\big\} \le K. \label{1uh03}
\end{eqnarray}
Here $K$ depends only on the given data. In particular, $K$ is independent of $h$ and $t$.
\end{lemma}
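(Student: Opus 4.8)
\textbf{Proof proposal for Lemma \ref{est1.uh}.}
The plan is to mirror the energy-argument structure of Lemma \ref{est.u}, but carried out on the discrete spaces $\bJ_h$, using the discrete Stokes operator $\td_h$ in place of $\td$ and exploiting the antisymmetry $b(\bv_h,\bw_h,\bw_h)=0$ and the interpolation estimates (\ref{nonlin1}) of Lemma \ref{nonlin}. First I would establish (\ref{1uh01}) by induction on $i$. For $i=1$, differentiate (\ref{dwfj}) once in time and test with $\bphi_h=(\tau^*)^{r}e^{2\alpha t}\,\bu_{ht}$ for $r\in\{0,1\}$; the time-derivative of the nonlinear term splits into $b(\bu_{ht},\bu_h,\bu_{ht})$ (which vanishes by antisymmetry when $r$-weight is pulled out, or is absorbed) and $b(\bu_h,\bu_{ht},\bu_{ht})=0$. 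The surviving $b(\bu_{ht},\bu_h,\bu_{ht})$ is bounded using the third line of (\ref{nonlin1}) by $C\|\bu_{ht}\|\,\|\nabla\bu_{ht}\|\,\|\nabla\bu_h\|$, which after Young's inequality is absorbed into the viscous term $\nu(\tau^*)^r e^{2\alpha t}\|\bu_{ht}\|_1^2$, leaving a lower-order term controlled by (\ref{uh02}). The $\sigma_t$-type term from differentiating the weight, together with $(\f_t,\bu_{ht})$, is handled exactly as in Lemma \ref{est.u}. Integrating in time and multiplying by $e^{-2\alpha t}$ gives the $i=1$ cases $r\in\{0,1\}$. For $i=2$ (forced by $r+i\le2$ to have $r=0$), differentiate (\ref{dwfj}) twice and test with $\bphi_h=(\tau^*)^{3}e^{2\alpha t}\,\bu_{htt}$; here the critical nonlinear contributions are $b(\bu_{htt},\bu_h,\bu_{htt})$ and $2b(\bu_{ht},\bu_{ht},\bu_{htt})$, both again absorbed via (\ref{nonlin1}) into the viscous term at the cost of factors of $\|\nabla\bu_h\|$, $\|\bu_{ht}\|_1$ already controlled by the $i=1$ step; the remaining lower-order term uses the just-proved $i=1$ bound for $r=0$.

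Next, for (\ref{1uh02}) I would return to the twice-differentiated equation (\ref{dwfj}) but now test with $\bphi_h=(\tau^*)^2 e^{2\alpha t}\,\td_h\bu_{htt}$ — or, more directly, read $\|\bu_{htt}\|$ off the equation itself: from the twice-differentiated (\ref{dwfj}), $\|\bu_{htt}\|\le \nu\|\td_h\bu_{ht}\| + (\text{nonlinear terms in }\bu_h,\bu_{ht}) + \|\f_{tt}\|$, and squaring, weighting by $(\tau^*)^2 e^{2\alpha t}$ and integrating reduces everything to $\int_0^t(\tau^*)^2 e^{2\alpha s}\|\bu_{h,s}\|_2^2\,ds$, which is the $i=1,r=1$ case of (\ref{1uh01}), plus nonlinear pieces bounded by Lemma \ref{nonlin} and the already-established estimates. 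For the pressure part $\|p_{h,s}\|_{H^1/\R}$, I would use the discrete inf-sup condition (${\bf B2'}$): differentiating the full system (\ref{dwfh}) once in time, $(p_{ht},\nabla\cdot\bphi_h)=(\bu_{htt},\bphi_h)+\nu a(\bu_{ht},\bphi_h)+\frac{d}{dt}b(\bu_h,\bu_h,\bphi_h)-(\f_t,\bphi_h)$, so ${\bf B2'}$ gives $\|p_{ht}\|_{H^1/\R}\le C(\|\bu_{htt}\| + \|\bu_{ht}\|_1 + \|\nabla\bu_h\|\,\|\bu_{ht}\|_1\text{-type terms})$; squaring, weighting and integrating then closes (\ref{1uh02}) from the velocity estimates just obtained.

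Finally, for the pointwise bound (\ref{1uh03}), I would first obtain $(\tau^*)^{3/2}\|\bu_{htt}(t)\|$ pointwise — this is exactly the content of the twice-differentiated energy test with weight $\sigma_2(t)=(\tau^*)^3 e^{2\alpha t}$ as in the last step of Lemma \ref{est.u}, combined with (\ref{1uh01}) — and then read $\|\bu_{ht}\|_2=\|\td_h\bu_{ht}\|$ off the once-differentiated equation (\ref{dwfj}): $\nu\|\td_h\bu_{ht}\|\le\|\bu_{htt}\| + (\text{nonlinear terms}) + \|\f_t\|$, where the nonlinear terms $b(\bu_{ht},\bu_h,\cdot)$ and $b(\bu_h,\bu_{ht},\cdot)$ are estimated by the fourth line of (\ref{nonlin1}) (paying with $\|\bu_{ht}\|^{1/2}\|\nabla\bu_{ht}\|^{1/2}$ and $\|\nabla\bu_h\|$, both controlled) and a duality/interpolation argument; multiplying by $(\tau^*)^{3/2}$ and using the weighted bounds gives $(\tau^*)^{3/2}\|\bu_{ht}\|_2\le K$. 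The pressure bound $(\tau^*)^{3/2}\|p_{ht}\|_1\le K$ then follows from ${\bf B2'}$ applied to the once-differentiated system exactly as above but now in pointwise (rather than integrated) form.

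\textbf{Main obstacle.} The delicate point throughout is the weighting by powers of $\tau^*(t)=\min\{1,t\}$: because $\bu_0\in\bJ_1$ only, each additional time derivative costs one more power of $\tau^*$ (the estimates degenerate as $t\to0$), and one must be careful that when $\td_h$ or an extra $\nabla$ is traded for a time derivative via the equation, the weight bookkeeping stays consistent and the nonlinear terms from $\frac{d^i}{dt^i}b(\bu_h,\bu_h,\cdot)$ — which involve products like $\bu_{ht}\cdot\nabla\bu_{ht}$ whose natural regularity is borderline in two dimensions — are absorbed into the viscous term with room to spare rather than producing an uncontrolled factor. Handling $i=2$ with only $r=0$ available, so that $\bu_{htt}$ is controlled merely in $\bL^2$ and not in $\bH^1$ up to $t=0$, is what makes (\ref{1uh02}) and the $\|p_{h,s}\|_{H^1/\R}$ part the technically tightest step.
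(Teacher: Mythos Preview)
Your plan is correct and is exactly what the paper indicates: the authors give no detailed proof of Lemma~\ref{est1.uh} but simply state that it ``is similar to that of Lemma~\ref{est.u}'', i.e., repeat the continuous energy arguments with $\td_h$ in place of $\td$ and the estimates of Lemma~\ref{nonlin} replacing the continuous Sobolev bounds. Two small bookkeeping corrections: the test-function weight for the case $(i,r)$ should be $(\tau^*)^{r+2i-1}e^{2\alpha t}$, not $(\tau^*)^r e^{2\alpha t}$ (so $\sigma(t)$ for $(1,0)$, $\sigma_1(t)$ for $(1,1)$, $\sigma_2(t)$ for $(2,0)$, matching Lemma~\ref{est.u}); and for the $r=1$ pointwise bound on $\|\bu_{ht}\|_1$ you must test the once-differentiated equation with $\sigma_1(t)\td_h\bu_{ht}$, not with $\bu_{ht}$ itself---testing with $\bu_{ht}$ only yields the $r=0$ case. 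With those adjustments your outline coincides with the route sketched in the paper.
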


\section{Error Analysis: Galerkin Method}
\se

In this section, we briefly present a convergence analysis for Galerkin approximation. The analysis for smooth initial data, that is, $\bu_0\in\bJ_1\cap\bH^2$ can be found in \cite{HR82}. The analysis below is for non-smooth initial data, that is, $\bu_0\in\bJ_1$.
The proofs follow similar lines as those for smooth data (see \cite{HR82}) apart from a few modifications. We will simply try to highlight these modifications in our proofs.

\begin{theorem}\label{errest}
Let $\Omega$ be a convex polygon and let the conditions (${\bf A1}$)-(${\bf A2}$)
and (${\bf B1}$)-(${\bf B2}$) be satisfied. Further, let the discrete initial velocity $\bu_{0h}\in \bJ_h$ with $\bu_{0h}=P_h\bu_0,$ where $\bu_0\in \bJ_1.$ Then,
there exists a positive constant $C$, that depends only on the given data and the
domain $\Omega$, such that for $0<T<\infty $ with $t\in (0,T]$
$$ \|(\bu-\bu_h)(t)\|+h\|\nabla(\bu-\bu_h)(t)\|\le Ke^{Kt}t^{-1/2}h^2. $$
\end{theorem}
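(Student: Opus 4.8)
The plan is to estimate the error $\bu - \bu_h$ by the standard splitting through an intermediate projection. Since $\bu_0$ is only in $\bJ_1$, all estimates must carry the weight $\tau^*(t) = \min\{1,t\}$ (or powers of it), reflecting the $O(t^{-1/2})$ blow-up of $\|\bu(t)\|_2$ recorded in Lemma~\ref{est.u}. First I would introduce a suitable elliptic/Stokes projection $\bv_h(t)\in\bJ_h$ of $\bu(t)$ — for instance the ``modified Stokes projection'' used in \cite{HR82} — and split $\bu - \bu_h = (\bu - \bv_h) + (\bv_h - \bu_h) =: \bta + \bxi_h$. The projection error $\bta$ is controlled directly by approximation property $({\bf B2})$ and the regularity bounds of Lemma~\ref{est.u}, giving $\|\bta(t)\| + h\|\nabla\bta(t)\| \le C h^2 \|\bu(t)\|_2 \le C K t^{-1/2} h^2$; similarly one needs $\|\bta_t\|$, which brings in $\|\bu_t(t)\|_1 \sim O(t^{-1})$ and hence a factor $t^{-1}$ before integration in time.

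Next I would derive the error equation for $\bxi_h = \bv_h - \bu_h \in \bJ_h$ by subtracting (\ref{dwfj}) from the projected form of (\ref{wfj}), obtaining
\begin{equation*}
(\bxi_{h,t}, \bphi_h) + \nu a(\bxi_h, \bphi_h) = -\big(b(\bu,\bu,\bphi_h) - b(\bu_h,\bu_h,\bphi_h)\big) + (\bta_t, \bphi_h) + \text{(projection-consistency terms)},
\end{equation*}
for all $\bphi_h \in \bJ_h$. The nonlinear difference is rewritten as $b(\bu,\bu,\bphi_h) - b(\bu_h,\bu_h,\bphi_h) = b(\bu - \bu_h, \bu, \bphi_h) + b(\bu_h, \bu - \bu_h, \bphi_h)$ and each piece split again through $\bta$ and $\bxi_h$; the terms involving $\bxi_h$ are absorbed by the viscous term via a kickback argument (using the bounds on $\|\bu\|_1$, $\|\bu_h\|_1$ from Lemmas~\ref{est.u}, \ref{est.uh} and the estimates of Lemma~\ref{nonlin}), while the terms involving $\bta$ are treated as data. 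I would choose $\bphi_h = e^{2\alpha t}\bxi_h$ (using $\alpha < \nu\lambda_1$ exactly as in the proof of Lemma~\ref{est.u}) to get an exponentially-weighted $L^\infty(\bL^2) \cap L^2(\bH^1)$ bound on $\bxi_h$, and then $\bphi_h = \sigma(t)\bxi_{h,t}$ (with $\sigma(t) = \tau^*(t) e^{2\alpha t}$) for the $H^1$-estimate, applying Gronwall's Lemma at each stage. Weighting by $\tau^*(t)$ is essential precisely because the right-hand side data $\bta_t$ and the initial mismatch $\bxi_h(0) = \bv_h(0) - P_h\bu_0$ are not bounded uniformly near $t=0$.

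The main obstacle, as the authors themselves flag, is the lack of regularity at $t=0$: one cannot simply follow \cite{HR82} because there $\|\bu(t)\|_2$ is bounded up to $t=0$, whereas here it is only $O(t^{-1/2})$, and $\|\bu_t\|_1$ is only $O(t^{-1})$. Consequently the integrals $\int_0^t e^{2\alpha s}\|\bta_s\|^2\,ds$ and the like involve $\int_0^t s^{-2}\,ds$-type singularities that must be tamed by carrying the factor $\tau^*(s)$ inside. The delicate point is bookkeeping the exact power of $\tau^*(t)$ through every term — in particular ensuring that after the $\bphi_h = \sigma(t)\bxi_{h,t}$ step one still recovers a clean $t^{-1/2}$ (and not a worse power) in the final $H^1$-bound — and handling the initial layer, where one uses $\|\bxi_h(0)\| \le \|(\bv_h - \bu)(0)\| + \|(\bu_0 - P_h\bu_0)\| \le C h \|\bu_0\|_1$ (here only the first-order bound $({\bf B1})$/(\ref{ph1}) is available, not (\ref{ph2})), contributing an $O(h)$ term that, combined with the $\bL^2$-duality (Aubin–Nitsche / negative-norm) trick alluded to via ``Lemma~\ref{E.neg}'', is upgraded to the stated $O(t^{-1/2}h^2)$ in the $\bL^2$-norm while the $\bH^1$-error naturally carries one power of $h$ less. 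Assembling $\|\bu - \bu_h\| \le \|\bta\| + \|\bxi_h\|$ and the analogous gradient bound, and folding all exponential and data-dependent constants into $Ke^{Kt}$, then yields the theorem.
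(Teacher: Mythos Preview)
Your overall strategy (splitting, weighted energy estimates, tracking the $t^{-1/2}$ blow-up) is on the right track, but there is a concrete gap in the mechanism you propose, and it is precisely the step that distinguishes the nonsmooth case from \cite{HR82}.

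You propose to test first with $\bphi_h=e^{2\alpha t}\bxi_h$ (no $\tau^*$ weight) to obtain an $L^\infty(\bL^2)\cap L^2(\bH^1)$ bound. This does not close. The consistency term $(\bta_t,\bxi_h)$ (your notation) on the right-hand side requires $\int_0^t e^{2\alpha s}\|\bta_s\|^2\,ds$, but for $\bu_0\in\bJ_1$ the available regularity (Lemma~\ref{est.u}) gives only $\int_0^t \sigma_1(s)\|\bu_s\|_2^2\,ds\le K$ with $\sigma_1(s)=(\tau^*(s))^2e^{2\alpha s}$; without the $(\tau^*)^2$ weight the integral diverges. If instead you test with the weighted $\sigma(t)\bxi_h$, differentiating the weight produces $\sigma_t(t)\|\bxi_h\|^2\sim e^{2\alpha t}\|\bxi_h\|^2$, whose time integral requires an $L^2(\bL^2)$ bound on $\bxi_h$ that you do not yet have --- a circularity you cannot break with Gronwall alone. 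Your mention of an Aubin--Nitsche/negative-norm trick only for ``upgrading'' the initial term misses the point: the negative-norm argument is needed \emph{as a preliminary step} to furnish exactly this $L^2(\bL^2)$ control.

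The paper's proof handles this in two ways that you should incorporate. First, the intermediate $\bv_h$ is not a static Stokes projection but the solution of the \emph{linearized evolution} problem (equation (5.3) of \cite{HR82}); this gives $\bta(0)=0$ (in the paper's notation $\bta=\bv_h-\bu_h$, the discrete part), eliminating the $O(h)$ initial mismatch you worry about, and allows a parabolic duality argument to give $\int_0^t e^{2\alpha s}\|(\bu-\bv_h)(s)\|^2\,ds\le Kh^4$ directly. Second --- and this is the key technical device for nonsmooth data --- before the weighted estimate on the discrete part $\bta$, one tests with $e^{2\alpha t}(-\td_h)^{-1}\bta$ to obtain $\|\bta(t)\|_{-1}^2+\int_0^t e^{2\alpha s}\|\bta(s)\|^2\,ds\le Ke^{Kt}h^4$. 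Only after this $L^2(\bL^2)$ bound is in hand does testing with $\sigma(t)\bta$ close, since the $\sigma_t\|\bta\|^2$ term is now controlled. The $\bH^1$ bound then follows by the inverse inequality (\ref{inv.hypo}), not via $\bphi_h=\sigma(t)\bxi_{h,t}$ as you suggest.
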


\begin{proof}
Denoting the Galerkin approximation error as $\E=\bu-\bu_h$, we split the error in two parts.
$$ \E= (\bu-\bv_h)+(\bv_h-\bu_h) =:\bxi+\bta, $$
where $\bv_h$ satisfies the linearized equation $(5.3)$ from \cite{HR82}. We have (see \cite[Lemma 5.1]{HR82}; same proof will go through)
\begin{equation}\label{bxi.l2l2}
e^{-2\alpha t}\int_0^t e^{2\alpha s}\|\bxi(s)\|^2 ds \le Kh^4.
\end{equation}
For $L^{\infty}(\bL^2)$ estimate of $\bxi$, we now split $\bxi$ as follows:
$$ \bxi= (\bu-\bv_h)= (\bu-S_h\bu)+(S_h\bu-\bv_h) =:\bzt+\bth, $$
where $S_h$ is given by $(4.52)$, \cite{HR82}. Lemma $4.7$, \cite{HR82} tells us that
\begin{eqnarray}
\|\bzt\|+h\|\bzt\|_1 \le Ch^2\{(1+\|\bu\|_1)\|\bu\|_2+\|p\|_1\} \label{Stokes1} \\
\|\bzt_t\|+h\|\bzt_t\|_1 \le Ch^2\{(1+\|\bu\|_1)\|\bu_t\|_2+\|p_t\|_1\} \label{Stokes2}
\end{eqnarray}
In order to complete the estimate for $\bxi$, we only need to estimate $\bth$. The equation in $\bth$ reads as
\begin{equation}\label{bth}
(\bth_t,\bphi_h)+\nu a(\bth,\bphi_h)= -(\bzt_t,\bphi_h),~~\bphi_h\in\bJ_h.
\end{equation}
For $\sigma(t)=\tau^*(t)e^{2\alpha t}$, we put $\bphi_h=\sigma(t)\bth$ in (\ref{bth}) to find
\begin{align*}
\frac{d}{dt}\big\{\sigma(t)\|\bth\|^2\big\}-\sigma_t(t)\|\bth\|^2+2\nu\sigma(t)\|\bth\|_1^2
= -2\sigma(t)(\bzt_t,\bth).
\end{align*}
Using Cauchy-Schwarz and Young's inequality, we obtain
\begin{align*}
\frac{d}{dt}\big\{\sigma(t)\|\bth\|^2\big\}+2\nu\sigma(t)\|\bth\|_1^2 & \le Ce^{2\alpha t} \|\bth\|^2+\sigma_1(t)\|\bzt_t\|^2 \\
& \le Ce^{2\alpha t}\big\{\|\bxi\|^2+\|\bzt\|^2\big\}+\sigma_1(t)\|\bzt_t\|^2.
\end{align*}
Integrate with respect to time and use (\ref{bxi.l2l2}), (\ref{Stokes1}) and (\ref{Stokes2}).
\begin{align*}
\sigma(t)\|\bth(t)\|^2+\nu\int_0^t \sigma(s)\|\bth(s)\|_1^2 ds &\le Ke^{2\alpha t}h^4
+Ch^4\int_0^t e^{2\alpha s}\big\{\|\bu(s)\|_2^2+\|p(s)\|_1^2\big\}~ds \\
& +Ch^4\int_0^t \sigma_1(s)\big\{\|\bu_s(s)\|_2^2 +\|p_s(s)\|_1^2\big\}
\end{align*}
Use (\ref{est.u2}) and (\ref{est.u4}) and then multiply by $e^{-2\alpha t}$ to get
\begin{align*}
\tau^*(t)\|\bth(t)\|^2+e^{-2\alpha t}\int_0^t \sigma(s)\|\bth(s)\|_1^2 ds \le Kh^4.
\end{align*}
Now an use of triangle inequality along with inverse hypothesis (\ref{inv.hypo}) results in
\begin{align}\label{bxi.l2}
\|\bxi(t)\|+h\|\bxi(t)\|_1 \le Kt^{-1/2}h^2.
\end{align}
It now remains to estimate $\bta$. The equation in $\bta$ is
\begin{equation}\label{bta1}
(\bta_t,\bphi_h)+\nu a(\bta,\bphi_h)= \Lambda_{1,h}(\bphi_h),~~\bphi_h\in\bJ_h,
\end{equation}
where
\begin{align}\label{lamb.1h}
\Lambda_{1,h}(\bphi_h)= b(\bu_h,\bu_h,\bphi_h)-b(\bu,\bu,\bphi_h)
= -b(\bu_h,\E,\bphi_h)-b(\E,\bu,\bphi_h).
\end{align}
Due to non-smooth initial data, we need an intermediate estimate, before we proceed for $L^{\infty}(\bL^2)$ estimate of $\bta$. First we need $L^2(\bL^2)$ estimate of $\bta$. For that, choose $\bphi_h=e^{2\alpha t}(-\td_h)^{-1}\bta$ in (\ref{bta1}) to obtain
\begin{align}\label{bta1.neg}
\frac{1}{2}\frac{d}{dt}\|\hta\|_{-1}^2-\alpha\|\hta\|_{-1}^2+\nu\|\hta\|^2= e^{2\alpha t}
\Lambda_{1,h}((-\td_h)^{-1}\bta).
\end{align}
Here, $\hta= e^{\alpha t}\bta$.
Using (\ref{lamb.1h}) and the definition of $b(\cdot,\cdot,\cdot)$, we observe that
\begin{align}\label{nonlin01}
\Lambda_{1,h}((-\td_h)^{-1}\bta)=& -b(\bu_h,\bxi+\bta,(-\td_h)^{-1}\bta)
-b(\bxi+\bta,\bu,(-\td_h)^{-1}\bta) \nonumber \\
=& -\frac{1}{2}((\bu_h\cdot\nabla)(\bxi+\bta),(-\td_h)^{-1}\bta)-\frac{1}{2} ((\bu_h\cdot\nabla)(-\td_h)^{-1}\bta,\bxi+\bta) \nonumber \\
& -\frac{1}{2}(((\bxi+\bta)\cdot\nabla)\bu,(-\td_h)^{-1}\bta)-\frac{1}{2} (((\bxi+\bta)\cdot\nabla)(-\td_h)^{-1}\bta,\bu).
\end{align}
Use Lemma \ref{nonlin} to arrive at the following:
\begin{eqnarray}
-\frac{1}{2} ((\bu_h\cdot\nabla)(-\td_h)^{-1}\bta,\bxi+\bta) &\le & C\|\bu_h\|^{1/2}
\|\bu_h\|_1^{1/2}\|\bta\|_{-1}^{1/2}\|\bta\|^{1/2}(\|\bxi\|+\|\bta\|) \label{nonlin01a} \\
-\frac{1}{2}(((\bxi+\bta)\cdot\nabla)\bu,(-\td_h)^{-1}\bta) &\le & C(\|\bxi\|+\|\bta\|)
\|\bu\|_1\|\bta\|_{-2}^{1/2}\|\bta\|^{1/2} \label{nonlin01b} \\
-\frac{1}{2} (((\bxi+\bta)\cdot\nabla)(-\td_h)^{-1}\bta,\bu) &\le & C(\|\bxi\|+\|\bta\|)
\|\bta\|_{-1}^{1/2}\|\bta\|^{1/2}\|\bu\|^{1/2}\|\bu\|_1^{1/2}. \label{nonlin01c}
\end{eqnarray}
For the first term on the right-hand side of (\ref{nonlin01}), we have \\
({\it with the notations $D_i=\frac{\partial}{\partial x_i}$ and $\bv=(v_1,v_2)$})
\begin{align}\label{nonlin02}
((\bu_h\cdot\nabla)(\bxi+&\bta), (-\td_h)^{-1}\bta) =\sum_{i,j=1}^2 \int_{\Omega} u_{h,i}
D_i\{(\xi+\eta)_j\}((-\td_h)^{-1}\bta)_j~d{\bf x} \nonumber \\
&= -\sum_{i,j=1}^2 \int_{\Omega} D_i u_{h,i}(\xi+\eta)_j((-\td_h)^{-1}\bta)_j~d{\bf x} -\sum_{i,j=1}^2 \int_{\Omega} u_{h,i}D_i \{((-\td_h)^{-1}\bta)_j\}(\xi+\eta)_j \nonumber \\
&= -((\nabla\cdot\bu_h)(\bxi+\bta),(-\td_h)^{-1}\bta)-((\bu_h\cdot\nabla)(-\td_h)^{-1}\bta, (\bxi+\bta)) \nonumber \\
&\le C\|\bu\|_1(\|\bxi\|+\|\bta\|)\|\bta\|_{-2}^{1/2}\|\bta\|^{1/2}+C\|\bu_h\|^{1/2}
\|\bu_h\|_1^{1/2}\|\bta\|_{-1}^{1/2}\|\bta\|^{1/2}(\|\bxi\|+\|\bta\|)
\end{align}
in view of Lemma \ref{nonlin}. Incorporate (\ref{nonlin01a})-(\ref{nonlin02}) in (\ref{nonlin01}) and use Lemma \ref{est.u} to find
\begin{align}\label{nonlin03}
\Lambda_{1,h}((-\td_h)^{-1}\bta) &\le C\|\bxi\|\|\bta\|_{-1}^{1/2}\|\bta\|^{1/2} \big\{\|\bu_h\|_1+\|\bu\|_1\big\}+C\|\bta\|_{-1}^{1/2}\|\bta\|^{3/2}\|\bu\|_1 \nonumber \\
&\le \varepsilon\|\bta\|^2+K(\|\bta\|_{-1}^2+\|\bxi\|^2).
\end{align}
Hence, with appropriate $\varepsilon$, we obtain from (\ref{bta1.neg})
\begin{align*}
\frac{d}{dt}\|\hta\|_{-1}^2+\nu\|\hta\|^2= K(\|\hta\|_{-1}^2+\|\hxi\|^2).
\end{align*}
Integrate and use (\ref{bxi.l2l2}). Apply Gronwall's lemma to coclude that
\begin{equation}\label{bta.neg}
\|\bta(t)\|_{-1}^2+e^{-2\alpha t}\int_0^t e^{2\alpha s}\|\bta(s)\|^2 ds \le Ke^{Kt}h^4.
\end{equation}
Now choose $\bphi_h=\sigma(t)\bta$ in (\ref{bta1}) to obtain
\begin{align*}
\frac{d}{dt}\big\{\sigma(t)\|\bta\|^2\big\}+2\nu\sigma(t)\|\bta\|_1^2 = \sigma_t(t)\|\bta\|^2
+2\sigma(t)\Lambda_{1,h}(\bta).
\end{align*}
As in (\ref{nonlin01}), we have
\begin{align}\label{nonlin04}
\Lambda_{1,h}(\bta) &= -b(\bu_h,\bxi,\bta)-b(\bxi+\bta,\bu,\bta) \nonumber \\
=& -\frac{1}{2}((\bu_h\cdot\nabla)\bxi,\bta)-\frac{1}{2}((\bu_h\cdot\nabla)\bta,\bxi)
-\frac{1}{2}(((\bxi+\bta)\cdot\nabla)\bu,\bta)-\frac{1}{2}(((\bxi+\bta)\cdot\nabla)\bta,\bu).
\end{align}
Using Lemma \ref{nonlin}, we find
\begin{eqnarray}
&-&\frac{1}{2}(((\bxi+\bta)\cdot\nabla)\bu,\bta)-\frac{1}{2}(((\bxi+\bta)\cdot\nabla)\bta,\bu) \nonumber \\
&=& -\frac{1}{2}((\bxi\cdot\nabla)\bu,\bta)-\frac{1}{2}((\bxi\cdot\nabla)\bta,\bu)
-\frac{1}{2}((\bta\cdot\nabla)\bu,\bta)-\frac{1}{2}((\bta\cdot\nabla)\bta,\bu) \nonumber \\
&\le & C\|\bxi\|\{\|\bu\|_1^{1/2}\|\bu\|_2^{1/2}\|\bta\|^{1/2}\|\bta\|_1^{1/2}+\|\bta\|_1
\|\bu\|^{1/2}\|\bu\|_2^{1/2}\} \nonumber \\
&&+C\|\bta\|\|\bta\|_1\|\bu\|_1+C\|\bta\|^{1/2}\|\bta\|_1^{3/2}\|\bu\|^{1/2}\|\bu\|_1^{1/2}. \label{nonlin04a}\\
&-& \frac{1}{2}((\bu_h\cdot\nabla)\bta,\bxi) \le C\|\bu_h\|^{1/2}\|\bu_h\|_2^{1/2}\|\bta\|_1
\|\bxi\|. \label{nonlin04b}
\end{eqnarray}
And as for the first term on the right-hand side of (\ref{nonlin04}), we observe, as in (\ref{nonlin02}),
\begin{align}\label{nonlin05}
-\frac{1}{2}((\bu_h\cdot\nabla)\bxi,\bta)& = -\frac{1}{2}((\nabla\cdot\bu_h)\bxi,\bta)
-\frac{1}{2}((\bu_h\cdot\nabla)\bta,\bxi) \nonumber \\
&\le C\|\bu_h\|_1^{1/2}\|\bu_h\|_2^{1/2}\|\bxi\|\|\bta\|^{1/2}\|\bta\|_1^{1/2}+
C\|\bu_h\|^{1/2}\|\bu_h\|_2^{1/2}\|\bta\|_1\|\bxi\|.
\end{align}
Incorporating (\ref{nonlin04a})-(\ref{nonlin05}) in (\ref{nonlin04}) and using Lemmas \ref{est.u} and \ref{est.uh}, we obtain
\begin{align}
\Lambda_{1,h}(\bta) &\le K\|\bxi\|\|\bta\|_1(\tau^*(t))^{-1/4}+K(\|\bta\|\|\bta\|_1+
\|\bta\|^{1/2}\|\bta\|_1^{3/2}) \nonumber \\
&\le K\|\bxi\|\|\bta\|_1(\tau^*(t))^{-1/4}+\varepsilon\|\bta\|_1^2+K\|\bta\|^2,
\end{align}
for some $\varepsilon>0$. Therefore,
\begin{align*}
2\sigma(t)\Lambda_{1,h}(\bta) \le 4\varepsilon\sigma(t)\|\bta\|_1^2+K\sigma(t)\|\bta\|^2
+Ke^{2\alpha t}\|\bxi\|^2.
\end{align*}
With appropriate $\varepsilon$, we now obtain
\begin{align*}
\frac{d}{dt}\big\{\sigma(t)\|\bta\|^2\big\}+\nu\sigma(t)\|\bta\|_1^2 \le
K\big\{\|\hxi\|^2+\|\hta\|^2\big\}.
\end{align*}
Integrate and use (\ref{bxi.l2l2}) and (\ref{bta.neg}) to find that
\begin{equation}\label{bta.l2a}
\tau^*(t)\|\bta(t)\|^2+e^{-2\alpha t}\int_0^t \sigma(s)\|\bta(s)\|_1^2 ds \le Ke^{Kt}h^4.
\end{equation}
With inverse hypothesis (\ref{inv.hypo}), we conclude that
\begin{equation}\label{bta.l2}
\|\bta(t)\|+h\|\bta(t)\|_1 \le Ke^{Kt}t^{-1/2}h^2.
\end{equation}
This along with (\ref{bxi.l2}) completes the proof.
\end{proof}

\begin{theorem}\label{errest.p}
Under the assumptions of Theorem \ref{errest} and with an additional assumption of $({\bf B2}')$, we have
$$ \|(p-p_h)(t)\| \le Ke^{Kt}t^{-1/2}h. $$
\end{theorem}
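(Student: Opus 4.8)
The plan is to control the pressure error through the discrete inf-sup condition $({\bf B2}')$, which reduces the estimate to the momentum residual, and the residual is in turn governed by the velocity error already established in Theorem \ref{errest}. Since $p(t)\notin L_h$, write $(p-p_h)(t)=(p-j_hp)+(j_hp-p_h)$; by $({\bf B1})$ and the pressure regularity $\|p(t)\|_1\le Kt^{-1/2}$ of (\ref{est.u3}) one has $\|p-j_hp\|\le Ch\|p\|_1\le Kt^{-1/2}h$, so only $\|j_hp-p_h\|_{L^2/\R}$ remains. As $j_hp-p_h\in L_h$, $({\bf B2}')$ produces $\bphi_h\in\bH_h$, $\bphi_h\neq0$, with $\|j_hp-p_h\|_{L^2/\R}\le C\,|(j_hp-p_h,\nabla\cdot\bphi_h)|/\|\nabla\bphi_h\|$, and I would split $(j_hp-p_h,\nabla\cdot\bphi_h)=(j_hp-p,\nabla\cdot\bphi_h)+(p-p_h,\nabla\cdot\bphi_h)$, the first term being $\le Kt^{-1/2}h\|\nabla\bphi_h\|$ as above. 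Subtracting (\ref{dwfh}) from (\ref{wfh}) with test function $\bphi_h$, and using that $(\bu\cdot\nabla\bu,\bphi_h)=b(\bu,\bu,\bphi_h)$ since $\bu(t)\in\bJ_1$, gives the residual identity
\[
(p-p_h,\nabla\cdot\bphi_h)=(\E_t,\bphi_h)+\nu a(\E,\bphi_h)+\big(b(\bu,\bu,\bphi_h)-b(\bu_h,\bu_h,\bphi_h)\big).
\]

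The elliptic term is bounded by $\nu\|\nabla\E\|\,\|\nabla\bphi_h\|$; writing $b(\bu,\bu,\bphi_h)-b(\bu_h,\bu_h,\bphi_h)=b(\bu,\E,\bphi_h)+b(\E,\bu_h,\bphi_h)$ and invoking Lemma \ref{nonlin} together with the uniform bounds $\|\bu\|_1,\|\bu_h\|_1\le K$ of Lemmas \ref{est.u} and \ref{est.uh}, the nonlinear part is bounded by $K\|\nabla\E\|\,\|\nabla\bphi_h\|$. Since Theorem \ref{errest} gives $\|\nabla\E(t)\|\le Ke^{Kt}t^{-1/2}h$, both contributions are $\le Ke^{Kt}t^{-1/2}h\|\nabla\bphi_h\|$. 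The only delicate term is $(\E_t,\bphi_h)$, and this is where the non-smooth data enters: $\|\bu_t(t)\|$ is merely $O(t^{-1/2})$, so bounding $\|\E_t\|$ head-on would cost the factor $h$. Here I would split $\bphi_h=P_h\bphi_h+(\bphi_h-P_h\bphi_h)$ with $P_h$ the $\bL^2$-projection onto $\bJ_h$. Since $\bu_{ht}\in\bJ_h$ while $\bphi_h-P_h\bphi_h$ is $\bL^2$-orthogonal to $\bJ_h$, $(\E_t,\bphi_h-P_h\bphi_h)=(\bu_t,\bphi_h-P_h\bphi_h)\le\|\bu_t\|\,\|\bphi_h-P_h\bphi_h\|\le Kt^{-1/2}h\|\nabla\bphi_h\|$ by (\ref{est.u3}) and (\ref{ph1}). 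For the remaining piece, since $P_h\bphi_h\in\bJ_h$ the error equation restricted to discretely divergence free test functions applies: subtracting (\ref{dwfj}) from (\ref{wfh}) with $\bw_h\in\bJ_h$ gives
\[
(\E_t,\bw_h)+\nu a(\E,\bw_h)+b(\bu,\bu,\bw_h)-b(\bu_h,\bu_h,\bw_h)=(p,\nabla\cdot\bw_h)=(p-j_hp,\nabla\cdot\bw_h),
\]
the last equality because $(j_hp,\nabla\cdot\bw_h)=0$ for $\bw_h\in\bJ_h$; bounding the right-hand side by $Kt^{-1/2}h\|\nabla\bw_h\|$ and the elliptic and nonlinear terms as above, $|(\E_t,\bw_h)|\le Ke^{Kt}t^{-1/2}h\|\nabla\bw_h\|$, and taking $\bw_h=P_h\bphi_h$ and using (\ref{ph1}) once more, $|(\E_t,P_h\bphi_h)|\le Ke^{Kt}t^{-1/2}h\|\nabla\bphi_h\|$.

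Collecting the three contributions, $|(j_hp-p_h,\nabla\cdot\bphi_h)|\le Ke^{Kt}t^{-1/2}h\|\nabla\bphi_h\|$, hence $\|j_hp-p_h\|_{L^2/\R}\le Ke^{Kt}t^{-1/2}h$, and together with $\|p-j_hp\|\le Kt^{-1/2}h$ this yields $\|(p-p_h)(t)\|\le Ke^{Kt}t^{-1/2}h$. The step I expect to be the main obstacle is the treatment of $(\E_t,\bphi_h)$: one must avoid any direct bound on $\|\E_t\|$ and instead exploit that $\E_t$ only needs to be tested against discretely divergence free functions — where it is controlled through the already-established velocity error — up to a complementary term that is small by approximation, so that the unavoidable $t^{-1/2}$ singularities of $\bu_t$ and $p$, forced by taking $\bu_0$ merely in $\bJ_1$, get multiplied by the full power $h$ and not by a worse negative power of $t$.
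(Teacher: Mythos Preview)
Your proof is correct and follows essentially the same approach as the paper. The paper packages the key step as a separate lemma bounding the dual norm $\|\E_t\|_{-1}$ (noting $\|\E_t\|_{-1,h}\le\|\E_t\|_{-1}$ for conforming elements and then, for $\bphi\in\bH_0^1$, splitting $\bphi=P_h\bphi+(\bphi-P_h\bphi)$), whereas you work directly with $\bphi_h\in\bH_h$ and split $\bphi_h=P_h\bphi_h+(\bphi_h-P_h\bphi_h)$; the splitting idea, the use of the $\bJ_h$-error equation on $P_h\bphi_h$ with $(p,\nabla\cdot P_h\bphi_h)=(p-j_hp,\nabla\cdot P_h\bphi_h)$, and the treatment of the orthogonal piece via $(\E_t,\bphi_h-P_h\bphi_h)=(\bu_t,\bphi_h-P_h\bphi_h)$ are identical in both.
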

\begin{proof}
Following Lemma $6.1$, \cite{HR82}, and using $({\bf B1})$, (\ref{est.u3}) and $({\bf B2}')$ along with the observation that
$$ \Lambda_{1,h}(\bphi_h) \le K\|\E\|_1\|\bphi_h\|_1 $$
we obtain
\begin{align}\label{errest.p01}
\|(p-p_h)(t)\|_{L^2/\R} \le Kt^{-1/2}h+K\|\E(t)\|_1+C\|\E_t(t)\|_{-1,h},
\end{align}
where
\begin{align*}
\|\E_t\|_{-1,h} =\sup_{0\neq \bphi_h\in\bH_h}\frac{<\E_t,\bphi_h>}{\|\bphi_h\|_1}.
\end{align*}
Taking supremum over a bigger set, we find that
\begin{align}\label{neg.h}
\|\E_t\|_{-1,h} \le \|\E_t\|_{-1} =\sup_{0\neq\bphi\in\bH_0^1} \frac{<\E_t,\bphi>}{\|\bphi\|_1}.
\end{align}
This is possible due to conforming finite elements. We now have from (\ref{errest.p01}), using Theorem \ref{errest},
$$ \|(p-p_h)(t)\|_{L^2/\R} \le Ke^{Kt}t^{-1/2}h+C\|\E_t(t)\|_{-1}. $$
We complete the proof by proving the following Lemma.
\end{proof}

\begin{lemma}\label{E.neg}
The error $\E=\bu-\bu_h$ due to Galerkin approximation satisfies, for $t>0$
$$ \|\E_t(t)\|_{-1} \le Ke^{Kt}t^{-1/2}h. $$
\end{lemma}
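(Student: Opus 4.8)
The plan is to estimate $\|\E_t(t)\|_{-1}$ directly from the error equation, exploiting that testing against $\bphi\in\bH_0^1$ (not just $\bphi_h\in\bH_h$) is available thanks to conforming elements. First I would write the evolution equation satisfied by the error. Subtracting the discrete formulation (\ref{dwfj}) from the continuous one (\ref{wfj}) gives, for $\bphi_h\in\bJ_h$,
\begin{align*}
(\E_t,\bphi_h)+\nu a(\E,\bphi_h) = (p_h,\nabla\cdot\bphi_h) - b(\bu,\bu,\bphi_h)+b(\bu_h,\bu_h,\bphi_h),
\end{align*}
and more usefully, for a general $\bphi\in\bH_0^1$ one splits $\bphi = (\bphi - P_h\bphi) + P_h\bphi$ and uses the continuous equation on $\bphi-P_h\bphi$. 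This reduces the task to controlling each resulting term in the $\|\cdot\|_{-1}$ dual pairing by $\|\bphi\|_1$ times something of size $Ke^{Kt}t^{-1/2}h$.

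The key steps, in order: (i) decompose $\E = \bxi + \bta$ as in the proof of Theorem \ref{errest} and note $\E_t = \bxi_t + \bta_t$; (ii) for $\bta_t$, use the equation (\ref{bta1}): $(\bta_t,\bphi_h) = -\nu a(\bta,\bphi_h)+\Lambda_{1,h}(\bphi_h)$, so that $|(\bta_t,\bphi_h)| \le \nu\|\bta\|_1\|\bphi_h\|_1 + K\|\E\|_1\|\bphi_h\|_1$, and invoke (\ref{bta.l2a}), the $H^1$-bound on $\bta$ from Theorem~\ref{errest}, and (\ref{bxi.l2}), getting the $t^{-1/2}h$ order (noting $\|\bphi_h\|_1 = \|P_h\bphi\|_1 \le C\|\bphi\|_1$); (iii) for the part of $\bta_t$ tested against $\bphi - P_h\bphi$, combine with the $\bxi_t$ contribution. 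Here one writes $(\E_t, \bphi - P_h\bphi)$ using the continuous momentum equation: $(\bu_t,\bphi-P_h\bphi) = -\nu a(\bu,\bphi-P_h\bphi) - b(\bu,\bu,\bphi-P_h\bphi) + (p,\nabla\cdot(\bphi-P_h\bphi))$. Each term is bounded using $\|\bphi - P_h\bphi\| \le Ch\|\bphi\|_1$ from (\ref{ph1}), so $|(\E_t,\bphi-P_h\bphi)| \le Ch\|\bphi\|_1(\|\bu\|_2 + \|\bu\|_1^2 + \|p\|_1)$, which by (\ref{est.u3}) is $\le Ch(\tau^*(t))^{-1/2}\|\bphi\|_1$. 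For $\bu_{h,t}$ one has the analogous discrete identity; alternatively, since $\bxi_t = \bzt_t + \bth_t$, one bounds $\|\bzt_t\|$ via (\ref{Stokes2}) and $(\tau^*(t))^{1/2}\|\bu_t\|_2 \le K$ (which follows from (\ref{est.u4})), and $\bth_t$ via the equation (\ref{bth}): $|(\bth_t,\bphi_h)| \le \nu\|\bth\|_1\|\bphi_h\|_1 + \|\bzt_t\|\|\bphi_h\|$, using the $\sigma$-weighted $H^1$-bound on $\bth$ established in the proof of Theorem~\ref{errest}. Collecting, $\|\E_t(t)\|_{-1} \le Ke^{Kt}t^{-1/2}h$.

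The main obstacle I expect is the nonsmooth-data singularity bookkeeping: every higher-order norm of $\bu$ carries a negative power of $\tau^*(t)$, so one must verify that testing in negative norm gains exactly enough to keep the blowup at the advertised $t^{-1/2}$ and no worse. Concretely, the $\bzt_t$ term wants $\|\bu_t\|_2$, which is only $O((\tau^*(t))^{-1/2})$, and the pressure/convection terms in the continuous equation want $\|p\|_1 + \|\bu\|_2 \sim (\tau^*(t))^{-1/2}$ by (\ref{est.u3}); one must confirm these combine (after multiplication by the $h$ gained from $P_h$-approximation) to $t^{-1/2}h$ rather than a higher power. A secondary technical point is handling $\Lambda_{1,h}$ at the $H^1\times H^1$ level cleanly — but the bound $\Lambda_{1,h}(\bphi_h)\le K\|\E\|_1\|\bphi_h\|_1$ is already recorded in the proof of Theorem~\ref{errest.p}, so this reduces to inserting the known $\|\E\|_1 = O(t^{-1/2}h)$ estimate. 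No Gronwall argument should be needed here; the estimate is purely algebraic once the component bounds from Theorem~\ref{errest} are in hand.
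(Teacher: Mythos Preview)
Your core strategy---split $\bphi = P_h\bphi + (\bphi - P_h\bphi)$, use the error equation on $P_h\bphi$, and exploit $P_h$-approximation on the remainder---is exactly what the paper does. The difference is that the paper carries this out directly on $\E$ without ever invoking the $\bxi/\bta$ (or $\bzt/\bth$) splitting. Concretely, the paper uses the single error equation $(\E_t,\bphi_h)+\nu a(\E,\bphi_h)=\Lambda_{1,h}(\bphi_h)+(p,\nabla\cdot\bphi_h)$ at $\bphi_h=P_h\bphi$, together with the orthogonality $(\bu_{h,t},\bphi-P_h\bphi)=0$ so that $(\E_t,\bphi-P_h\bphi)=(\bu_t,\bphi-P_h\bphi)\le Ch\|\bu_t\|\|\bphi\|_1$. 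This gives immediately $\|\E_t\|_{-1}\le C\|\E\|_1+Ch\|p\|_1+K\|\E\|_1+Ch\|\bu_t\|$, and each piece is $O(t^{-1/2}h)$ by Theorem~\ref{errest} and (\ref{est.u3}). No decomposition, no use of the continuous momentum equation on $\bphi-P_h\bphi$, and only $\|\bu_t\|$ (not $\|\bu_t\|_2$) is needed.

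Your detour through $\bxi_t=\bzt_t+\bth_t$ introduces a small but genuine gap: the claim $(\tau^*(t))^{1/2}\|\bu_t\|_2\le K$ does \emph{not} follow from (\ref{est.u4}), which only gives a time-integral bound. The pointwise estimate available is (\ref{est.u5}), namely $(\tau^*(t))^{3/2}\|\bu_t\|_2^2\le K$, so $\|\bzt_t\|\le Kh^2(\tau^*(t))^{-3/4}$; this is \emph{not} uniformly dominated by $Kh(\tau^*(t))^{-1/2}$ as $t\to 0$. The fix is simply to drop the $\bzt/\bth$ layer and use the $\bxi$-equation (or, better, the full $\E$-equation as the paper does) at $\bphi_h=P_h\bphi$: then only $\|\bxi\|_1$ and $h\|p\|_1$ appear, both of order $t^{-1/2}h$, and the problematic $\|\bu_t\|_2$ never enters.
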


\begin{proof}
The equation in $\E$ reads as
\begin{align}\label{Err}
(\E_t,\bphi_h)+\nu a(\E,\bphi_h)= \Lambda_{1,h}(\bphi_h)+(p,\nabla\cdot\bphi_h),
~~\bphi_h\in\bJ_h.
\end{align}
Now, for any $\bphi\in\bH_0^1$, we use (\ref{Err}) to find
\begin{align}\label{E.neg01}
(\E_t,\bphi) &= (\E_t,P_h\bphi)+(\E_t,\bphi-P_h\bphi) \nonumber \\
&= -\nu a(\E,P_h\bphi)+\Lambda_{1,h}(P_h\bphi)+(p,\nabla\cdot P_h\bphi)+(\E_t,\bphi-P_h\bphi).
\end{align}
Using discrete incompressibility condition, $H^1$-stability of $P_h$ and $({\bf B1})$, we obtain
\begin{equation}\left\{\begin{array}{rcl}\label{E.neg01a}
-\nu a(\E,P_h\bphi) &\le & C\|\E\|_1\|\bphi\|_1 \\
(p,\nabla\cdot P_h\bphi) &=& (p-j_hp,\nabla\cdot P_h\bphi) \le Ch\|p\|_1\|\bphi\|_1 \\
\Lambda_{1,h}(P_h\bphi) &\le & C\|\E\|_1\|\bphi\|_1(\|\bu\|_1+\|\bu_h\|_1) \\
(\E_t,\bphi-P_h\bphi) &=& (\bu_t,\bphi-P_h\bphi) \le Ch\|\bu_t\|\|\bphi\|_1.
\end{array}\right.
\end{equation}
From (\ref{E.neg01}), we now have, using the definition of negative norm
\begin{align}\label{E.neg02}
\|\E_t\|_{-1} \le C\|\E\|_1+Ch\|p\|_1+K\|\E\|_1+Ch\|\bu_t\|.
\end{align}
Use Lemma \ref{est.u} and Theorem \ref{errest} to complete the rest of the proof.
\end{proof}
\begin{remark}
Instead of using (\ref{neg.h}), if we have used a crude estimate, like $\|\E_t\|_{-1,h} \le C\|\E_t\|$ (as in \cite[$(6.2)$]{HR82}), we have obtained
$$ \|(p-p_h)(t)\| \le Ke^{Kt}t^{-1}h. $$
In that sense, we have improved the pressure error estimate for conforming finite elements.
\end{remark}

\begin{remark}
In the process, we observe the following estimate, combining (\ref{bxi.l2l2}) and (\ref{bta.neg}):
\begin{equation}\label{err.l2l2}
e^{-2\alpha t}\int_0^t e^{2\alpha s}\|\E(s)\|^2 ds \le Ke^{Kt}h^4.
\end{equation}
\end{remark}
\noindent
Below we present an estimate for $\E_t$, which will be needed in our error analysis for two-level method.
\begin{theorem}\label{errest.t}
Under the assumptions of Theorem \ref{errest}, we have
\begin{equation}\label{errest.t1}
e^{-2\alpha t}\int_0^t \sigma_1(s)\|\E_s(s)\|^2 ds \le Ke^{Kt}h^4.
\end{equation}
\end{theorem}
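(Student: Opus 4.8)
The plan is to obtain a weighted $L^2(0,t;\bL^2)$ estimate of $\E_s$ by reusing the splitting $\E=\bxi+\bta$ from the proof of Theorem~\ref{errest}, differentiating the $\bta$-equation in time, and testing against an appropriately time-weighted multiple of $\bta_t$. The difficulty compared with the Galerkin-error analysis for smooth data is the singular behaviour near $t=0$: since $\bu_0\in\bJ_1$ only, the quantities $\|\bu(t)\|_2$, $\|\bu_t(t)\|$, $\|p(t)\|_1$ blow up like $t^{-1/2}$ (Lemma~\ref{est.u}), and $\|\bu_h(t)\|_2$ likewise (Lemma~\ref{est.uh}), so every test must carry the weight $\sigma_1(s)=(\tau^*(s))^2e^{2\alpha s}$ to absorb these singularities. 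First I would record, from the $\bxi$-analysis already carried out, that $e^{-2\alpha t}\int_0^t\sigma_1(s)\|\bzt_s(s)\|^2\,ds\le Kh^4$ (this follows from (\ref{Stokes2}) together with (\ref{est.u4})) and, by the argument producing (\ref{bta.l2a}), that the refined $\bth$-part also obeys a $\sigma_1$-weighted estimate; combined with the inverse hypothesis (\ref{inv.hypo}) this already yields $e^{-2\alpha t}\int_0^t\sigma_1(s)\|\bxi_s(s)\|^2\,ds\le Ke^{Kt}h^4$, so it suffices to estimate $\bta_s$.

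Next I would differentiate (\ref{bta1}) in time, giving
\begin{equation*}
(\bta_{tt},\bphi_h)+\nu a(\bta_t,\bphi_h)=\frac{d}{dt}\Lambda_{1,h}(\bphi_h),\qquad\bphi_h\in\bJ_h,
\end{equation*}
and test with $\bphi_h=\sigma_1(t)\bta_t$. The left-hand side produces, after the usual manipulation, $\frac{d}{dt}\{\sigma_1(t)\|\bta_t\|^2\}+2\nu\sigma_1(t)\|\bta_t\|_1^2-\sigma_{1,t}(t)\|\bta_t\|^2$, and $\sigma_{1,t}(t)\le C\,e^{2\alpha t}\tau^*(t)\le C\,\sigma(t)^{1/2}\sigma_1(t)^{1/2}/\dots$, i.e. $\sigma_{1,t}(t)\|\bta_t\|^2$ is controlled by $Ce^{2\alpha t}\tau^*(t)\|\bta_t\|^2$, which on integration is bounded using the already-established $\sigma$-weighted bound $e^{-2\alpha t}\int_0^t\sigma(s)\|\bta_s(s)\|_1^2\,ds$ of the type in (\ref{bta.l2a}) (differentiate that estimate one notch, or alternatively obtain the $\|\bta_t\|$-in-$L^2$ bound directly by testing the undifferentiated equation with $\sigma(t)\bta_t$). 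The term $\frac{d}{dt}\Lambda_{1,h}(\sigma_1(t)\bta_t)$ I would expand using (\ref{lamb.1h}): $\frac{d}{dt}\Lambda_{1,h}(\bphi_h)=-b(\bu_{h,t},\E,\bphi_h)-b(\bu_h,\E_t,\bphi_h)-b(\E_t,\bu,\bphi_h)-b(\E,\bu_t,\bphi_h)$. Each of the four pieces is bounded by Lemma~\ref{nonlin}, using $\E=\bxi+\bta$, $\E_t=\bxi_t+\bta_t$, and the regularity bounds of Lemmas~\ref{est.u}, \ref{est.uh}, \ref{est1.uh}; the typical outcome is a bound of the form $\varepsilon\sigma_1(t)\|\bta_t\|_1^2+K\sigma_1(t)\|\bta_t\|^2+K(\tau^*(t))^{\rho}e^{2\alpha t}(\|\bxi\|^2+\|\bxi_t\|^2+\|\bta\|^2)$ for suitable nonnegative powers $\rho$, where all the data-dependent factors such as $\|\bu_t\|$, $\|\bu_h\|_2$ are paired with enough powers of $\tau^*$ from $\sigma_1$ to keep the coefficients integrable near $s=0$.

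After a kickback on the $\varepsilon\sigma_1\|\bta_t\|_1^2$ term and integration from $0$ to $t$ (noting $\sigma_1(0)=0$ so there is no initial contribution), Gronwall's Lemma applied to $y(t)=\sigma_1(t)\|\bta_t(t)\|^2$ with the forcing terms $\int_0^t\sigma_1(s)\|\bzt_s(s)\|^2\,ds$, the $\sigma$-weighted $\bta$-bounds, and (\ref{err.l2l2})/(\ref{bta.l2a}) — all of which are $O(e^{Kt}h^4)$ — yields $e^{-2\alpha t}\int_0^t\sigma_1(s)\|\bta_s(s)\|^2\,ds\le Ke^{Kt}h^4$. Adding the $\bxi_s$ contribution gives (\ref{errest.t1}). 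I expect the main obstacle to be bookkeeping the weights in the nonlinear terms of $\frac{d}{dt}\Lambda_{1,h}$: the worst actors are $b(\bxi_t,\bu,\cdot)$ and $b(\E,\bu_t,\cdot)$, where one must check that the $h$-order of $\bxi_t$ (which itself degrades like $t^{-1/2}$, from (\ref{Stokes2}) and (\ref{est.u5})) together with the $t^{-1/2}$ from $\|\bu_t\|$ is still absorbed by the two powers of $\tau^*$ in $\sigma_1$, so that the resulting time integral converges at $s=0$ and contributes only $O(h^4)$ — this is precisely the ``bottleneck'' alluded to in the introduction, and it forces the choice of the weight $\sigma_1$ rather than a lighter one.
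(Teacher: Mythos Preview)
Your treatment of the $\bxi_t$--piece via the splitting $\bxi_t=\bzt_t+\bth_t$ is workable (and is in fact a legitimate alternative to the parabolic-duality argument the paper uses for the bound $e^{-2\alpha t}\int_0^t\sigma_1(s)\|\bxi_s(s)\|^2\,ds\le Kh^4$): for $\bzt_t$ you combine (\ref{Stokes2}) with (\ref{est.u4}) exactly as you say, and for $\bth_t$ one can test the undifferentiated equation (\ref{bth}) with $\sigma_1(t)\bth_t$ to get $\int_0^t\sigma_1(s)\|\bth_s\|^2\,ds\le C\int_0^t\sigma(s)\|\bth\|_1^2\,ds+C\int_0^t\sigma_1(s)\|\bzt_s\|^2\,ds\le Kh^4$, using the $\sigma$-weighted $\|\bth\|_1$-bound already proved in Theorem~\ref{errest}. (The inverse hypothesis (\ref{inv.hypo}) plays no role here; that remark is spurious.)

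The genuine gap is in the $\bta_t$--part. Testing the differentiated $\bta$--equation with $\bphi_h=\sigma_1(t)\bta_t$ does \emph{not} close at order $h^4$. Consider the term $b(\bu_h,\bxi_t,\bta_t)$ coming from $\frac{d}{dt}\Lambda_{1,h}$. To extract $\|\bxi_t\|$ (the only norm of $\bxi_t$ you control at order $h^2$) you must, via Lemma~\ref{nonlin} and the integration-by-parts trick (\ref{nonlin05}), put a $\|\bu_h\|_2^{1/2}$ on the coarse factor; this introduces $(\tau^*)^{-1/4}$, and after Young's inequality the remainder is $K\sigma_1(\tau^*)^{-1/2}\|\bxi_t\|^2=K(\tau^*)^{3/2}e^{2\alpha t}\|\bxi_t\|^2$. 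But you only have $\int_0^t\sigma_1(s)\|\bxi_s\|^2\,ds\le Kh^4$, i.e.\ with weight $(\tau^*)^2$, not $(\tau^*)^{3/2}$, and the missing half-power of $\tau^*$ is not available from Lemma~\ref{est.u}. If instead you avoid $\|\bu_h\|_2$ and use only $\|\bu_h\|_1$, the price is $\|\bxi_t\|_1$, which is controlled only at order $h$ (cf.\ (\ref{errest.t05})), so you lose a full factor $h^2$. The same obstruction arises in $b(\bu_{h,t},\E,\bta_t)$ and $b(\E,\bu_t,\bta_t)$. In short, with test function $\bta_t$ you are forced to trade either integrability at $t=0$ or an $h^2$ factor; neither is acceptable.

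The paper resolves this by testing the differentiated $\bta$--equation with $\bphi_h=\sigma_1(t)(-\td_h)^{-1}\bta_t$. The extra smoothing shifts one derivative off the test function, so the nonlinear terms can all be bounded using only $\|\bu_h\|_1$ and $\|\bu\|_1$ (uniformly bounded in $t$) together with $\|\bxi_t\|$ and $\|\E\|$ at order $h^2$; see (\ref{errest.t09})--(\ref{errest.t10}). The price is that one must separately control $\int_0^t\sigma(s)\|\bta_s\|_{-1}^2\,ds$, which is done by testing the \emph{undifferentiated} $\bta$--equation with $\sigma(t)(-\td_h)^{-1}\bta_t$ and invoking (\ref{bta.neg}) and (\ref{err.l2l2}). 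This negative-norm detour is the missing idea in your plan; without it the $h^4$ rate cannot be reached under the non-smooth-data regularity of Lemma~\ref{est.u}.
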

\begin{proof}
The proof is similar to that of Theorem \ref{errest} and so, we will only sketch a proof here.
First we split the error:
$$ \E_t=\bxi_t+\bta_t. $$
The equation in $\bxi_t$ is given by
\begin{align}\label{bxit}
(\bxi_{tt},\bphi_h)+\nu a(\bxi_t,\bphi_h)= (p_t,\nabla\cdot\bphi_h).
\end{align}
With $P_h:\bL^2(\Omega)\to \bJ_h$ as $L^2$-projection, we choose $\bphi_h= \sigma_1(t) P_h\bxi_t=\sigma_1(t)(\bxi_t-(\bu-P_h\bu)_t)$ in (\ref{bxit}) to find (recall $\sigma_1(t)= (\tau^*(t))^2 e^{2\alpha t},~\tau^*(t)=\min \{1,t\}$)
\begin{align*}
\frac{1}{2}\frac{d}{dt}\big\{\sigma_1(t)\|\bxi_t\|^2\}+\nu\sigma_1(t)\|\bxi_t\|_1^2=
\frac{1}{2}\sigma_{1,t}(t)\|\bxi_t\|^2+\sigma_1(t)(\bxi_{tt},(\bu-P_h\bu)_t) \\
+\nu\sigma_1(t) a(\bxi_t,(\bu-P_h\bu)_t)+\sigma_1(t)(p_t-j_hp_t,\nabla\cdot P_h\bxi_t).
\end{align*}
Use projection properties $({\bf B1})$ and Cauchy-Schwarz inequality to obtain
\begin{align}\label{errest.t01}
\frac{1}{2}\frac{d}{dt}\big\{\sigma_1(t)\|\bxi_t\|^2\}+\nu\sigma_1(t)\|\bxi_t\|_1^2 \le
C\sigma(t)\|\bxi_t\|^2+\frac{\sigma_1(t)}{2}\frac{d}{dt}\|(\bu-P_h\bu)_t\|^2 \nonumber \\
+Ch.\sigma_1(t)\|\bxi_t\|_1\big\{\|\bu_t\|_2+\|p_t\|_1\big\}
\end{align}
(recall that $\sigma(t)=\tau^*(t) e^{2\alpha t},~\tau^*(t)=\min \{1,t\}$) \\
Use Young's inequality and kickback argument. Integrate the resulting inequality to find
\begin{align*}
\sigma_1(t)\|\bxi_t\|^2+\nu\int_0^t \sigma_1(s)\|\bxi_s(s)\|_1^2 ds \le
C\int_0^t \sigma(s)\|\bxi_s(s)\|^2 ds+Ch.\sigma_1(t)\|\bu_t\|_1^2 \\
+Ch\int_0^t \sigma(s) \|\bu_s(s)\|_1^2 ds+Ch^2\int_0^t \sigma_1(s) \big\{\|\bu_t\|_2^2
+\|p_t\|_1^2\big\}~ds.
\end{align*}
Use Lemma \ref{est.u} to observe that
\begin{equation}\label{errest.t02}
(\tau^*(t))^2\|\bxi_t\|^2+\nu e^{-2\alpha t}\int_0^t \sigma_1(s)\|\bxi_s(s)\|_1^2 ds \le
Ce^{-2\alpha t}\int_0^t \sigma(s)\|\bxi_s(s)\|^2 ds+Kh^2.
\end{equation}
To estimate the first term on the right hand-side of (\ref{errest.t02}), we first recall the equation in $\bxi$:
\begin{align*}
(\bxi_t,\bphi_h)+\nu a(\bxi,\bphi_h)=(p,\nabla\cdot\bphi_h),~~\forall\bphi_h\in \bJ_h.
\end{align*}
Choose $\bphi_h=\sigma(t)P_h\bxi_t$ above.
\begin{align*}
\sigma(t)\|\bxi_t\|^2+\frac{\nu}{2}\frac{d}{dt}\big\{\sigma(t)\|\bxi\|_1^2\} \le
Ce^{2\alpha t}\|\bxi\|_1^2+Ch.\sigma(t)\big\{\|\bxi_t\|\|\bu_t\|_1
+\|\bxi\|_1\|\bu_t\|_2+\|p\|_1\|\bxi_t\|_1\big\}.
\end{align*}
Use kickback argument and then integrate with respect to time. Use Lemma \ref{est.u}
to obtain
\begin{align*}
\int_0^t \sigma(s)\|\bxi_s(s)\|^2 ds+\sigma(t)\|\bxi(t)\|_1^2 \le C\int_0^t e^{2\alpha s}
\|\bxi(s)\|_1^2 ds+Ke^{2\alpha t}h^2+\frac{\nu}{2C}\int_0^t \sigma_1(s)\|\bxi_s(s)\|_1^2 ds.
\end{align*}
Use ($5.7$) from \cite{HR82}, that is,
\begin{equation}\label{errest.t03}
e^{-2\alpha t}\int_0^t e^{2\alpha s}\|\bxi(s)\|_1^2 ds \le Kh^2
\end{equation}
to find
\begin{equation}\label{errest.t04}
e^{-2\alpha t}\int_0^t \sigma(s)\|\bxi_s(s)\|^2 ds+\tau^*(t)\|\bxi\|_1^2 \le Kh^2
+\frac{\nu}{2C}e^{-2\alpha t}\int_0^t \sigma_1(s)\|\bxi_s(s)\|_1^2 ds.
\end{equation}
Using (\ref{errest.t04}) in (\ref{errest.t02}) leads us to
\begin{equation}\label{errest.t05}
(\tau^*(t))^2\|\bxi_t\|^2+ e^{-2\alpha t}\int_0^t \sigma_1(s)\|\bxi_s(s)\|_1^2 ds \le Kh^2.
\end{equation}
Next, we use parabolic duality argument to establish (as in Lemma $5.1$ from \cite{HR82}, except that the right hand-side of (5.8), \cite{HR82} should now read $\sigma_1(t)\bxi_t$)
\begin{equation}\label{errest.t06}
e^{-2\alpha t}\int_0^t \sigma_1(s)\|\bxi_s(s)\|^2 ds \le Kh^4.
\end{equation}
An estimate of $\bta_t$ would now complete the proof. By definition, we can easily deduce the equation satisfied by $\bta_t$.
\begin{align}\label{btat}
(\bta_{tt},\bphi_h)+\nu a(\bta_t,\bphi_h)= \Lambda_{1,h,t}(\bphi_h),
\end{align}
where $\bphi_{1,h,t}$ is given by
\begin{align}\label{errest.t07}
\Lambda_{1,h,t}(\bphi_h)= -b(\bu_{h,t},\E,\bphi_h)-b(\bu_h,\E_t,\bphi_h) -b(\E_t,\bu,\bphi_h)-b(\E,\bu_t,\bphi_h).
\end{align}
Choose $\bphi_h=\sigma_1(t)(-\td_h)^{-1}\bta_t$ to find
\begin{align}\label{errest.t08}
\frac{1}{2}\frac{d}{dt}\big\{\sigma_1(t)\|\bta_t\|_{-1}^2\}+\nu\sigma_1(t)\|\bta_t\|^2 \le
C\sigma(t)\|\bta_t\|_{-1}^2+\sigma_1(t)\Lambda_{1,h,t}((-\td_h)^{-1}\bta_t).
\end{align}
Using similar proof technique of (\ref{nonlin03}), we estimate the non-linear term $\Lambda_{1,h,t}$ as follows:
\begin{align}\label{errest.t09}
& \sigma_1(t)b(\bu_{h,t},\E,(-\td_h)^{-1}\bta_t)+\sigma_1(t)b(\E,\bu_t,(-\td_h)^{-1}\bta_t) \nonumber \\
\le & C\sigma_1(t)\|\E\|\|\bta_t\|_{-1}^{1/2}\|\bta_t\|^{1/2}(\|\bu_{h,t}\|_1
+\|\bu_t\|_1) \nonumber \\
\le & \varepsilon \sigma_1(t)\|\bta_t\|^2+C\sigma(t)\|\bta_t\|_{-1}^2+Ke^{2\alpha t}\|\E\|^2
\end{align}
and
\begin{align}\label{errest.t10}
& \sigma_1(t)b(\bu_h,\E_t,(-\td_h)^{-1}\bta_t)+\sigma_1(t)b(\E_t,\bu,(-\td_h)^{-1}\bta_t)
 \nonumber \\
\le & C\sigma_1(t)(\|\bxi_t\|+\|\bta_t\|)\|\bta_t\|_{-1}^{1/2}\|\bta_t\|^{1/2}(\|\bu_h\|_1
+\|\bu\|_1) \nonumber \\
\le & K\sigma_1(t)\|\bxi_t\|\|\bta_t\|_{-1}^{1/2}\|\bta_t\|^{1/2}+K\sigma_1(t)
\|\bta_t\|_{-1}^{1/2}\|\bta_t\|^{3/2} \nonumber \\
\le & \varepsilon \sigma_1(t)\|\bta_t\|^2+K\sigma(t)\|\bta_t\|_{-1}^2+K\sigma_1(t)
\|\bxi_t\|^2.
\end{align}
Incorporate (\ref{errest.t09}) and (\ref{errest.t10}) in (\ref{errest.t08}). Use Young's inequality and appropriate $\varepsilon>0$ and finally, integrate with respect to time to find
\begin{align*}
\sigma_1(t)\|\bta_t(t)\|_{-1}^2+\nu\int_0^t \sigma_1(s)\|\bta_s(s)\|^2 ds \le
K\int_0^t \sigma(s)\|\bta_s(s)\|_{-1}^2 ds+K\int_0^t e^{2\alpha s}\|\E(s)\|^2 ds \\
+K\int_0^t \sigma_1(s)\|\bxi_s(s)\|^2 ds.
\end{align*}
Use (\ref{err.l2l2}) and (\ref{errest.t06}) to obtain
\begin{align}\label{errest.t11}
\sigma_1(t)\|\bta_t(t)\|_{-1}^2+\nu\int_0^t \sigma_1(s)\|\bta_s(s)\|^2 ds \le K(t)e^{Kt}h^4
+K\int_0^t \sigma(s)\|\bta_s(s)\|_{-1}^2 ds.
\end{align}
To estimate the last term of (\ref{errest.t11}), we recall the equation in $\bta$ (\ref{bta1}):
$$ (\bta_t,\bphi_h)+\nu a(\bta,\bphi_h)= \Lambda_{1,h}(\bphi_h). $$
Choose $\bphi_h=\sigma(t)(-\td_h)^{-1}\bta_t$ to find
\begin{align}\label{errest.t12}
\sigma(t)\|\bta_t\|_{-1}^2+\frac{\nu}{2}\frac{d}{dt}\{\sigma(t)\|\bta\|^2\} \le 
\frac{\nu}{2}\sigma_t(t)\|\bta\|^2+\sigma(t)\Lambda_{1,h}((-\td_h)^{-1}\bta_t).
\end{align}
Similar to (\ref{nonlin03}), we estimate the non-linear term as follows:
\begin{align*}
\sigma(t)\Lambda_{1,h}((-\td_h)^{-1}\bta_t) &= -\sigma(t) b(\E,\bu_h,(-\td_h)^{-1}\bta_t)
-\sigma(t) b(\bu, \E,(-\td_h)^{-1}\bta_t) \\
&\le C\sigma(t)\|\E\|\|\bta_t\|_{-1}\{\|\bu_h\|_2+\|\bu\|_2\}
\end{align*}
Incorporate this in (\ref{errest.t12}), use kickback argument and then integrate with respect to time. Next, use Lemmas \ref{est.u} and \ref{est.uh}.
\begin{align}\label{err.et14}
\int_0^t \sigma(s)\|\bta_s(s)\|_{-1}^2 ds+\sigma(t)\|\bta(t)\|^2 \le 
C\int_0^t e^{2\alpha s}\|\bta(s)\|^2 ds+K\int_0^t e^{2\alpha s}\|\E(s)\|^2 ds.
\end{align}
Apply (\ref{bta.neg}) and (\ref{err.l2l2}) in (\ref{err.et14}). Use the resulting estimate in (\ref{errest.t11}) to obtain
\begin{align}\label{errest.t13}
\sigma_1(t)\|\bta_t(t)\|_{-1}^2+\nu\int_0^t \sigma_1(s)\|\bta_s(s)\|^2 ds \le K(t)e^{Kt}h^4.
\end{align}
This along with (\ref{errest.t06}) now completes the rest of the proof.
\end{proof}

\noindent The pressure error estimate for the two-level method requires another estimate.

\begin{lemma}\label{low.est}
Under the assumptions of Theorem \ref{errest}, we have
\begin{equation}\label{low.est1}
\int_0^t \sigma_1(s)\|\E_s(s)\|_1^2 ds \le Ke^{Kt}h^2.
\end{equation}
\end{lemma}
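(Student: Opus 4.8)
The plan is to mimic the splitting $\E_t = \bxi_t + \bta_t$ used in Theorem \ref{errest.t}, but now to control the $\bL^2(\bH^1)$-type weighted norm $\int_0^t \sigma_1(s)\|\E_s\|_1^2\,ds$ rather than the $\bL^2(\bL^2)$ one. For the $\bxi_t$ part, estimate \eqref{errest.t05} already gives $e^{-2\alpha t}\int_0^t \sigma_1(s)\|\bxi_s\|_1^2\,ds \le Kh^2$, which (after absorbing $e^{2\alpha t}\le e^{Kt}$) is exactly of the required form. So the work is entirely in the $\bta_t$ term, and the target is $\int_0^t \sigma_1(s)\|\bta_s\|_1^2\,ds \le Ke^{Kt}h^2$.

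To get the $\bH^1$-norm of $\bta_t$ in the dissipation, I would go back to the differentiated equation \eqref{btat}, $(\bta_{tt},\bphi_h)+\nu a(\bta_t,\bphi_h)=\Lambda_{1,h,t}(\bphi_h)$, and this time test with $\bphi_h = \sigma_1(t)\bta_t$ (rather than $\sigma_1(t)(-\td_h)^{-1}\bta_t$). This produces
\begin{align*}
\frac{1}{2}\frac{d}{dt}\big\{\sigma_1(t)\|\bta_t\|^2\big\}+\nu\sigma_1(t)\|\bta_t\|_1^2 \le C\sigma(t)\|\bta_t\|^2+\sigma_1(t)\Lambda_{1,h,t}(\bta_t).
\end{align*}
Each of the four terms in $\Lambda_{1,h,t}(\bta_t)$ from \eqref{errest.t07} is then bounded using Lemma \ref{nonlin}, peeling off one factor of $\|\bta_t\|_1$ to be absorbed by the dissipation via Young's inequality, and using Lemmas \ref{est.u}, \ref{est.uh}, \ref{est1.uh} to bound the norms of $\bu,\bu_h,\bu_t,\bu_{h,t}$ (the $\bu_{h,t}$ factor carries a weight $(\tau^*)^{-1/4}$ or so from \eqref{1uh01}, which is why the weight $\sigma_1(t)=(\tau^*(t))^2e^{2\alpha t}$ is strong enough to close things). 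Integrating in time, the right-hand side reduces to controllable pieces: $\int_0^t\sigma(s)\|\bta_s\|^2\,ds$ and $\int_0^t e^{2\alpha s}\|\E\|^2\,ds$ and $\int_0^t\sigma_1(s)\|\bxi_s\|^2\,ds$, which are handled by \eqref{errest.t13} (or \eqref{err.et14}), \eqref{err.l2l2}, and \eqref{errest.t06} respectively, plus $\int_0^t\sigma_1(s)\|\bxi_s\|_1^2\|\bu\|_?^2\,ds$-type terms from the splitting $\E_t=\bxi_t+\bta_t$ inside $\Lambda_{1,h,t}$, which reduce to \eqref{errest.t05} together with the $L^\infty$-in-time bounds on the coefficients. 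Every such term is $O(e^{Kt}h^2)$ or better, giving $\sigma_1(t)\|\bta_t(t)\|^2+\nu\int_0^t\sigma_1(s)\|\bta_s\|_1^2\,ds \le Ke^{Kt}h^2$, and combining with the $\bxi_t$ bound finishes the proof.

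The main obstacle I expect is the nonlinear term $b(\bu,\bu_t,\bta_t)$ (equivalently $b(\E,\bu_t,\bta_t)$ after splitting): differentiating the Navier--Stokes nonlinearity brings in $\bu_t$, and by Lemma \ref{est.u} one only controls $\|\bu_t\|$ up to a factor $(\tau^*(t))^{-1/2}$, while $\|\bu_t\|_1$ and $\|\bu_t\|_2$ are even more singular at $t=0$. One must therefore be careful to distribute the derivative (integration by parts in space as in \eqref{nonlin02}) so that the worst $\bu_t$-norm that appears is $\|\bu_t\|$ or $\|\bu_t\|_1$ with a singularity weak enough to be beaten by $\sigma_1(t)=(\tau^*(t))^2e^{2\alpha t}$; this is precisely the non-smooth-data difficulty that does not arise in \cite{He04}. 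A secondary point is that $\|\E\|$ itself is only $O(t^{-1/2}h^2)$ pointwise (Theorem \ref{errest}), so in any term where $\|\E\|$ multiplies a singular coefficient one should instead keep $\|\E\|^2$ under the integral and invoke the smoothing estimate \eqref{err.l2l2}, rather than pulling $\|\E\|$ out in $L^\infty$.
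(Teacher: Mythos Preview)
Your overall strategy matches the paper's exactly: split $\E_t=\bxi_t+\bta_t$, quote \eqref{errest.t05} for the $\bxi_t$-part, and test \eqref{btat} with $\bphi_h=\sigma_1(t)\bta_t$ to generate $\nu\sigma_1(t)\|\bta_t\|_1^2$ as dissipation. The paper does precisely this and arrives at
\[
\sigma_1(t)\|\bta_t(t)\|^2+\nu\int_0^t \sigma_1(s)\|\bta_s(s)\|_1^2\,ds \le K\int_0^t \sigma(s)\|\bta_s(s)\|^2\,ds+Ke^{Kt}h^2,
\]
which is your displayed inequality after integrating. So the skeleton is correct.

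There is, however, a genuine gap in how you dispose of the residual term $\int_0^t \sigma(s)\|\bta_s\|^2\,ds$. You claim it is ``handled by \eqref{errest.t13} (or \eqref{err.et14})'', but neither does the job: \eqref{errest.t13} controls $\int_0^t \sigma_1(s)\|\bta_s\|^2\,ds$ (weight $\sigma_1=(\tau^*)^2e^{2\alpha t}$, which is \emph{smaller} than $\sigma=\tau^* e^{2\alpha t}$, so the implication goes the wrong way), and \eqref{err.et14} controls $\int_0^t \sigma(s)\|\bta_s\|_{-1}^2\,ds$, the wrong norm. This term therefore needs a fresh estimate. The paper obtains it by a two-step cascade: first test the \emph{undifferentiated} $\bta$-equation \eqref{bta1} with $\bphi_h=\sigma(t)\bta_t$, which yields
\[
\int_0^t \sigma(s)\|\bta_s\|^2\,ds+\sigma(t)\|\bta(t)\|_1^2 \le Ke^{Kt}h^2+C\int_0^t e^{2\alpha s}\|\bta(s)\|_1^2\,ds,
\]
and then test \eqref{bta1} with $\bphi_h=e^{2\alpha t}\bta$ and apply Gronwall to get $\int_0^t e^{2\alpha s}\|\bta(s)\|_1^2\,ds\le Ke^{Kt}h^2$ (using \eqref{rem.03}). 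Both steps are routine energy arguments, but they are not available from what you cited and must be inserted.

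A smaller point: in bounding $\Lambda_{1,h,t}(\bta_t)$ the paper keeps $\|\E\|_1$ (pointwise, via Theorem~\ref{errest}) against the coefficient $\|\bu_{h,t}\|_1+\|\bu_t\|_1$, rather than your proposed $\|\E\|$ against \eqref{err.l2l2}. Their choice makes the bookkeeping with the $\tau^*$ weights cleaner, since $\sigma_1\cdot\|\E\|_1^2\cdot(\|\bu_{h,t}\|_1+\|\bu_t\|_1)$ produces directly an $h^2$ times an integrable-in-time factor; your route via $\|\E\|$ would force higher norms of $\bu_t,\bu_{h,t}$ (up to $\|\cdot\|_2$) and more delicate balancing of singularities at $t=0$.
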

\begin{proof}
Keeping in mind (\ref{errest.t05}), we only need the estimate in $\bta$.
Choosing $\bphi_h=\sigma_1(t)\bta_t$ in (\ref{errest.t07}) and estimating non-linear as
\begin{align*}
\Lambda_{1,h,t}(\bta_t) \le \varepsilon\|\bta_t\|_1^2+C\|\E\|_1^2(\|\bu_{h,t}\|_1 +\|\bu_t\|_1)+C\|\bxi_t\|_1^2(\|\bu_h\|_1^2+\|\bu\|_1^2)+K\|\bta_t\|^2,
\end{align*}
we find
\begin{align*}
\sigma_1(t)\|\bta_t(t)\|^2+\nu\int_0^t \sigma_1(s)\|\bta_s(s)\|_1^2 ds \le K\int_0^t \sigma(s)\|\bta_s(s)\|^2 ds+K\int_0^t \sigma_1(s)\|\bxi_s(s)\|_1^2 ds \nonumber \\
+Ke^{Kt}h^2\int_0^t \sigma(s)\big\{\|\bu_{h,s}(s)\|_1^2+\|\bu_s(s)\|_1^2\} ds
\end{align*}
Using (\ref{errest.t05}) and Lemmas \ref{est.u} and \ref{est1.uh}, we have
\begin{align}\label{rem.01}
\sigma_1(t)\|\bta_t(t)\|^2+\nu\int_0^t \sigma_1(s)\|\bta_s(s)\|_1^2 ds \le K\int_0^t
\sigma(s)\|\bta_s(s)\|^2 ds+Ke^{Kt}h^2.
\end{align}
Now, for the first term on the right-hand side, we take $\bphi_h=\sigma(t)\bta_t$ in the $\bta$ equation, to obtain, as in (\ref{errest.t12}):
\begin{align*}
\sigma(t)\|\bta_t\|^2+\frac{\nu}{2}\frac{d}{dt}\{\sigma(t)\|\bta\|_1^2\} \le 
\frac{\nu}{2}\sigma_t(t)\|\bta\|_1^2+\sigma(t)\Lambda_{1,h}(\bta_t).
\end{align*}
We estimate the non-linear term as:
\begin{align*}
\Lambda_{1,h}(\bta_t) \le K\|\bta_t\|\|\E\|_1(\|\bu_h\|_2^{1/2}+\|\bu\|_2^{1/2})
\end{align*}
to find
\begin{align}\label{rem.02}
\sigma(t)\|\bta(t)\|_1^2+\int_0^t \sigma(s)\|\bta_s(s)\|^2 ds \le Ke^{Kt}h^2+
C\int_0^t e^{2\alpha s}\|\bta(s)\|_1^2 ds.
\end{align}
To estimate the last term, choose $\bphi_h=e^{2\alpha t}\bta$ in the $\bta$ equation and estimate the non-linear term as:
\begin{align*}
\Lambda_{1,h}(\bta) \le \varepsilon\|\bta\|_1+K(\|\bxi\|_1^2+\|\bta\|^2)
\end{align*}
to obtain
\begin{align*}
\frac{d}{dt}\|\hta\|^2+\nu\|\hta\|_1^2 \le K(\|\hxi\|_1^2+\|\hta\|^2)
\end{align*}
Integrate with respect to time and the use $(5.7)$, \cite{HR82}, that is
\begin{equation}\label{rem.03}
\int_0^t e^{2\alpha s}\|\bxi(s)\|_1^2 ds \le Ke^{2\alpha t}h^2,
\end{equation}
to observe
$$ \|\hta(t)\|^2+\int_0^t e^{2\alpha s}\|\bta(s)\|_1^2 ds \le Ke^{2\alpha t}h^2
+\int_0^t e^{2\alpha s}\|\bta(S)\|^2 ds. $$
Use Gronwall's lemma.
\begin{equation}\label{rem.04}
\|\hta(t)\|^2+\int_0^t e^{2\alpha s}\|\bta(s)\|_1^2 ds \le Ke^{K t}h^2.
\end{equation}
Combining (\ref{rem.03}) and (\ref{rem.04}), we conclude
\begin{align}\label{rem.05}
\int_0^t e^{2\alpha s}\|\E(s)\|_1^2 ds \le Ke^{Kt}h^2.
\end{align}
Now from (\ref{rem.02}), we have
\begin{align}\label{rem.06}
\sigma(t)\|\bta(t)\|_1^2+\int_0^t \sigma(s)\|\bta_s(s)\|^2 ds \le Ke^{Kt}h^2.
\end{align}
And from (\ref{rem.01}), we have
\begin{align}\label{rem.07}
\sigma_1(t)\|\bta_t(t)\|^2+\nu\int_0^t \sigma_1(s)\|\bta_s(s)\|_1^2 ds \le Ke^{Kt}h^2.
\end{align}
Combining with (\ref{errest.t05}), we complete the rest of the proof.
\end{proof}

\section{Two-Level FE Galerkin Method}
\se

In this section, we work with an additional space discretizing parameter $H$, that corresponds to a coarse mesh. In other words, $0<h<H$ and both $h$ and $H$ tend to $0$. We introduce associated conforming finite element spaces $(\bH_H,L_H)$ and $(\bH_h,L_h)$ such that $(\bH_H,L_H) \subset (\bH_h,L_h)$. And this two-level finite element is to find a pair $(\bu^h,p^h)$ as follows:

{\bf First Level}: We compute the mixed finite element approximation $(\bu_H,p_H)\in (\bH_H,p_H)$ of $(\bu,p)$ of (\ref{wfh}). In other words, we solve the nonlinear problem in a coarse mesh. Find $(\bu_H,p_H)\in (H_H,p_H)$ satisfying
\begin{align}\label{2lvlH1}
(\bu_{Ht}, \bphi_H) +\nu a (\bu_H,\bphi_H)+& b(\bu_H,\bu_H,\bphi_H)-(p_H, \nabla \cdot \bphi_H) =(\f, \bphi_H), \nonumber \\
&(\nabla \cdot \bu_H, \chi_H) =0,
\end{align}
for $(\bphi_H, \chi_H)\in (\bH_H, L_H)$.

{\bf Second Level}: We solve a linearized problem on a fine mesh. In other words, we solve a Stokes problem. Find $(\bu^h,p^h) \in (\bH_h,L_h)$ satisfying
\begin{align}\label{2lvlH2}
(\bu^h_t, \bphi_h) +\nu a(\bu^h,\bphi_h)& -(p^h, \nabla \cdot \bphi_h) =(\f, \bphi_h)
- b(\bu_H,\bu_H,\bphi_h), \nonumber \\
&(\nabla \cdot \bu^h, \chi_h) =0,
\end{align}
for $(\bphi_h, \chi_h)\in (\bH_h, L_h)$.

\noindent An equivalent way is to look for solution in a weekly divergent free space.

{\bf First Level}: Find $\bu_H \in \bJ_H$ satisfying
\begin{align}\label{2lvlJ1}
(\bu_{Ht}, \bphi_H) +\nu a (\bu_H,\bphi_H)+ b(\bu_H,\bu_H,\bphi_H) =(\f, \bphi_H),
\end{align}
for $\bphi_H \in \bJ_H$.

{\bf Second Level}: With $\bu_H$ as the solution of (\ref{2lvlJ1}), find $\bu^h \in \bJ_h$ satisfying
\begin{align}\label{2lvlJ2}
(\bu^h_t, \bphi_h) +\nu a (\bu^h,\bphi_h) =(\f, \bphi_h)- b(\bu_H,\bu_H,\bphi_h),
\end{align}
for $\bphi_h \in \bJ_h$.

\begin{remark}
The well-posedness of the above systems can be seen from the facts that (\ref{2lvlH1}) or (\ref{2lvlJ1}) is classical Galerkin approximation and hence is well-posed as is stated in Section $3$. And (\ref{2lvlH2}) or (\ref{2lvlJ2}) represent linearized version. Given $\bu_H$ and with suitable $\bu^h(0)$, it is therefore an easy task to show the existence of an unique solution pair $(\bu^h,p^h)$ (or an unique solution $\bu^h$) for the linearized problem following the foot-steps of the non-linear problem.
\end{remark}
\noindent
Following Lemma \ref{est.uh}, we can easily obtain the {\it a priori} estimates of $\bu_H$.
\begin{lemma}\label{est.uH}
Under the assumptions of Lemma \ref{est.uh} and for $\bu_H(0)=P_H\bu_0$, the solution 
$\bu_H$ of (\ref{2lvlJ1}) satisfies, for $t>0,$
\begin{eqnarray}
\|\bu_H(t)\|+e^{-2\alpha t}\int_0^t e^{2\alpha s}\|\bu_H(t)\|_1^2~ds \le K, \label{uH01} \\
\|\bu_H(t)\|_1+e^{-2\alpha t}\int_0^t e^{2\alpha s}\|\bu_H(t)\|_2^2~ds \le K, \label{uH02} \\
(\tau^*(t))^{1/2}\|\bu_H(t)\|_2 \le K, \label{uH03}
\end{eqnarray}
where $\tau^*(t)= \min \{1,t\}$ and $K$ depends only on the given data. In particular, $K$ is independent of $H$ and $t$.
\end{lemma}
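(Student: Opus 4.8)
The plan is to follow verbatim the derivation of Lemma \ref{est.uh} (which is itself patterned after Lemma \ref{est.u}), simply replacing the fine-mesh space $\bJ_h$ by the coarse-mesh space $\bJ_H$ and the projection $P_h$ by $P_H$. Since \eqref{2lvlJ1} is a genuine classical Galerkin approximation of \eqref{wfj}, every argument used for $\bu_h$ applies unchanged; in fact Lemma \ref{est.uH} only asks for the first few estimates of Lemma \ref{est.uh}, so nothing new is needed. The one point worth mentioning is that the constant $K$ must be shown independent of $H$, which is automatic because in each step the constants come only from $\nu$, $\lambda_1$, the domain, and the data bounds $M_0$ in $({\bf A2})$, together with the $H$-independent properties of $P_H$ and of the trilinear form $b(\cdot,\cdot,\cdot)$ on conforming spaces (Lemma \ref{nonlin} with $h$ replaced by $H$, noting that the inverse hypothesis \eqref{inv.hypo} is not used for these low-order estimates).

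First I would take $\bphi_H=e^{2\alpha t}\bu_H=e^{\alpha t}\hbu_H$ in \eqref{2lvlJ1}; since $b(\bu_H,\bu_H,\bu_H)=0$, the nonlinear term drops and, exactly as in \eqref{est.u01}, a kickback on $(\nu-\alpha/\lambda_1)\|\hbu_H\|_1^2$ followed by integration and multiplication by $e^{-2\alpha t}$ gives \eqref{uH01}, with $K$ depending only on $\|\bu_0\|\le\|\bu_0\|_1\le M_0$ (via $\bu_H(0)=P_H\bu_0$ and $L^2$-stability of $P_H$) and $\|\f\|_\infty\le M_0$. Next, for \eqref{uH02} I would test with $\bphi_H=\td_H\bu_H$ (equivalently $-\Delta_H\bu_H$) to produce, as in \eqref{est.u03}, a differential inequality of the form $\frac{d}{dt}\|\bu_H\|_1^2+\nu\|\td_H\bu_H\|^2\le C\|\f\|^2+C\|\bu_H\|^2\|\bu_H\|_1^4$, where the nonlinear term is handled by Lemma \ref{nonlin}; combined with the $L^2$-in-time control of $\|\bu_H\|_1$ from \eqref{uH01} over a unit window, the uniform Gronwall lemma gives $\|\bu_H(t+t_0)\|_1\le K$, and then multiplying by $e^{2\alpha t}$ and integrating yields \eqref{uH02}.

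Finally, for \eqref{uH03} I would differentiate \eqref{2lvlJ1} in time, test with $\bphi_H=\sigma(t)\bu_{Ht}$ (with $\sigma(t)=\tau^*(t)e^{2\alpha t}$), estimate $(\bu_{Ht}\cdot\nabla\bu_H,\bu_{Ht})\le C\|\bu_{Ht}\|\,\|\bu_{Ht}\|_1\|\bu_H\|_1$ as in Lemma \ref{est.u}, and integrate using \eqref{uH02} to get $\tau^*(t)\|\bu_{Ht}(t)\|^2+e^{-2\alpha t}\int_0^t\sigma(s)\|\bu_{Hs}\|_1^2\,ds\le K$; then reading \eqref{2lvlJ1} as a discrete Stokes problem, $\|\bu_H\|_2\le C\|\td_H\bu_H\|\le C(\|\bu_{Ht}\|+\|\bu_H\|_1^2+\|\f\|)$, so $(\tau^*(t))^{1/2}\|\bu_H(t)\|_2\le K$. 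The only mild obstacle is bookkeeping the behaviour near $t=0$: because $\bu_0$ is only in $\bJ_1$, one cannot bound $\|\bu_{Ht}(0)\|$ or $\|\bu_H\|_2$ uniformly at $t=0$, which is precisely why the weights $\tau^*(t)$ and $\sigma(t)$ are inserted — but this is handled identically to Lemma \ref{est.u}, so no genuine difficulty arises, and the proof is complete.
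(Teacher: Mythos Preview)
Your proposal is correct and matches the paper's own treatment: the paper provides no detailed proof for Lemma~\ref{est.uH}, stating only ``Following Lemma~\ref{est.uh}, we can easily obtain the {\it a priori} estimates of $\bu_H$,'' and Lemma~\ref{est.uh} in turn defers to the proof of Lemma~\ref{est.u}. Your outline reproduces exactly those steps (test with $e^{2\alpha t}\bu_H$, then $\td_H\bu_H$ with uniform Gronwall, then differentiate and weight by $\sigma(t)$), and your remarks on $H$-independence of the constants and on the role of the $\tau^*(t)$ weight near $t=0$ are the right points to flag.
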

\noindent The following higher-order estimate of $\bu_H$ is required for error analysis. The proof of the same is similar to that of Lemma \ref{est1.uh}.
\begin{lemma}\label{est1.uH}
Under the assumptions of Lemma \ref{est.uH}, the solution $\bu_H$ of (\ref{2lvlJ1}) satisfies, for $t>0,$
\begin{equation}\label{1uH01}
(\tau^*)^2(t)\|\bu_{H,t}(t)\|_1^2+(\tau^*)^3(t)\|\bu_{H,t}(t)\|_2^2+e^{-2\alpha t}\int_0^t
\tau^*(s)e^{2\alpha s}\|\bu_{H,s}(s)\|^2~ds \le K.
\end{equation}
\end{lemma}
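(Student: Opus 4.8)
The plan is to follow the proof of Lemma~\ref{est1.uh} (equivalently, to establish the two-level analogue of the higher-order estimates (\ref{est.u3})--(\ref{est.u5})), working from the weakly divergence-free form (\ref{2lvlJ1}) and the first-order bounds of Lemma~\ref{est.uH}. Throughout I write $\sigma(t)=\tau^*(t)e^{2\alpha t}$, $\sigma_1(t)=(\tau^*(t))^2e^{2\alpha t}$, $\sigma_2(t)=(\tau^*(t))^3e^{2\alpha t}$ (so $\sigma_1\le\sigma$ and $\sigma_{1,t}\le C\sigma$), and I use $\td_H=P_H(-\Delta_H)$ with the discrete norms $\|\cdot\|_r$; I also use $e^{-2\alpha t}\int_0^t e^{2\alpha s}\|\bu_{H,s}(s)\|^2\,ds\le K$, which is part of the content (``following Lemma~\ref{est.uh}'') underlying Lemma~\ref{est.uH}. \emph{Step 1 (the integral term).} Differentiate (\ref{2lvlJ1}) in $t$ and take $\bphi_H=\sigma(t)\bu_{H,t}$; antisymmetry of $b$ kills $b(\bu_H,\bu_{H,t},\bu_{H,t})$, while $|b(\bu_{H,t},\bu_H,\bu_{H,t})|\le C\|\bu_{H,t}\|\,\|\bu_{H,t}\|_1\,\|\bu_H\|_1$ as in the proof of Lemma~\ref{est.u}. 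After Young's inequality and a kickback, integrating over $(0,t)$ (boundary term $0$ since $\sigma(0)=0$), using $\|\bu_H\|_1\le K$ and $\int_0^t e^{2\alpha s}\|\bu_{H,s}\|^2\,ds\le Ke^{2\alpha t}$, and multiplying by $e^{-2\alpha t}$, one gets $\tau^*(t)\|\bu_{H,t}(t)\|^2+e^{-2\alpha t}\int_0^t\sigma(s)\|\bu_{H,s}(s)\|_1^2\,ds\le K$; since $\|\cdot\|\le\lambda_1^{-1/2}\|\cdot\|_1$ this already gives the last term of (\ref{1uH01}) and the auxiliary bounds $\tau^*\|\bu_{H,t}\|^2\le K$, $e^{-2\alpha t}\int_0^t\sigma\|\bu_{H,s}\|_1^2\,ds\le K$ used below.

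\emph{Step 2 ($(\tau^*)^2\|\bu_{H,t}\|_1^2\le K$).} Differentiate (\ref{2lvlJ1}) in $t$ and take $\bphi_H=\sigma_1(t)\td_H\bu_{H,t}$, using $(\bu_{H,tt},\td_H\bu_{H,t})=\tfrac12\frac{d}{dt}\|\bu_{H,t}\|_1^2$ and $\nu a(\bu_{H,t},\td_H\bu_{H,t})=\nu\|\bu_{H,t}\|_2^2$. The differentiated trilinear terms $b(\bu_{H,t},\bu_H,\td_H\bu_{H,t})+b(\bu_H,\bu_{H,t},\td_H\bu_{H,t})$ are treated by moving the derivative off $\td_H\bu_{H,t}$ exactly as in (\ref{nonlin02}) (integration by parts, producing a harmless factor $\nabla\cdot\bu_{H,t}$ bounded by $\|\bu_{H,t}\|_1$) and then applying Lemma~\ref{nonlin}; each resulting term is of the form $\varepsilon\|\bu_{H,t}\|_2^2$ (to be absorbed) plus $C\|\bu_H\|_2\big(\|\bu_{H,t}\|_1^2+\|\bu_{H,t}\|^2\big)+C\|\bu_{H,t}\|_1^2+C\|\f_t\|^2$. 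Multiplying by $\sigma_1$ one obtains $\frac{d}{dt}\{\sigma_1\|\bu_{H,t}\|_1^2\}+\nu\sigma_1\|\bu_{H,t}\|_2^2\le C\sigma\|\bu_{H,t}\|_1^2+C\sigma_1\|\bu_H\|_2\big(\|\bu_{H,t}\|_1^2+\|\bu_{H,t}\|^2\big)+\sigma_1\|\f_t\|^2$. The point where the non-smooth data matters: since $\tau^*\|\bu_H\|_2^2\le K$ by (\ref{uH03}) one has $\sigma_1\|\bu_H\|_2\le K\sigma$, so the dangerous factors collapse to $\sigma\|\bu_{H,t}\|_1^2$ and $\sigma\|\bu_{H,t}\|^2$, both integrable by Step~1 (with $\int_0^t\sigma\|\bu_{H,s}\|^2\le\int_0^t\sigma\|\bu_{H,s}\|_1^2\le Ke^{2\alpha t}$). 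Integrating from $0$ and multiplying by $e^{-2\alpha t}$ yields $(\tau^*)^2(t)\|\bu_{H,t}(t)\|_1^2+e^{-2\alpha t}\int_0^t\sigma_1(s)\|\bu_{H,s}(s)\|_2^2\,ds\le K$. No Gronwall argument is used, which is essential because $\|\bu_H(s)\|_2^2\sim s^{-1}$ is not integrable near $s=0$.

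\emph{Step 3 ($(\tau^*)^3\|\bu_{H,t}\|_2^2\le K$).} First mimic the last part of the proof of Lemma~\ref{est.u}: differentiate (\ref{2lvlJ1}) twice in $t$, test with $\sigma_2(t)\bu_{H,tt}$, use $b(\bu_H,\bu_{H,tt},\bu_{H,tt})=0$, estimate $b(\bu_{H,tt},\bu_H,\bu_{H,tt})$ and $b(\bu_{H,t},\bu_{H,t},\bu_{H,tt})$ by Lemma~\ref{nonlin}, and, using $\tau^*\|\bu_{H,t}\|^2\le K$ and $(\tau^*)^2\|\bu_{H,t}\|_1^2\le K$ from Steps~1--2 to absorb the $\tau^*$-singular factors against $\sigma_2$, integrate to get $(\tau^*)^3\|\bu_{H,tt}(t)\|^2+e^{-2\alpha t}\int_0^t\sigma_2(s)\|\bu_{H,ss}(s)\|_1^2\,ds\le K$. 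Then rewrite the once-differentiated equation on $\bJ_H$ in operator form, $\nu\td_H\bu_{H,t}=P_H\f_t-\bu_{H,tt}-G_H(t)$, where $G_H(t)\in\bJ_H$ is defined by $(G_H(t),\bphi_H)=b(\bu_{H,t},\bu_H,\bphi_H)+b(\bu_H,\bu_{H,t},\bphi_H)$ and hence, by Lemma~\ref{nonlin}, $\|G_H(t)\|\le C\|\bu_{H,t}\|_1\|\bu_H\|_2$; thus $\nu\|\bu_{H,t}\|_2\le\|\f_t\|+\|\bu_{H,tt}\|+C\|\bu_{H,t}\|_1\|\bu_H\|_2$. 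Multiplying by $(\tau^*)^{3/2}$ and using $(\tau^*)^{3/2}\|\bu_{H,tt}\|\le K$, $(\tau^*)^{3/2}\|\f_t\|\le M_0$ and $(\tau^*)^{3/2}\|\bu_{H,t}\|_1\|\bu_H\|_2=(\tau^*\|\bu_{H,t}\|_1)\big((\tau^*)^{1/2}\|\bu_H\|_2\big)\le K$ (Step~2 and (\ref{uH03})) gives $(\tau^*)^3\|\bu_{H,t}(t)\|_2^2\le K$. Combining Steps~1--3 yields (\ref{1uH01}).

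I expect the main obstacle to be Step~2: keeping the time-differentiated nonlinear terms under control when $\td_H\bu_{H,t}$ sits in the slot of the trilinear form that would normally carry the derivative, which forces the integration-by-parts rearrangement of (\ref{nonlin02}) with careful bookkeeping of which of the five estimates of Lemma~\ref{nonlin} to apply, and, relatedly, recognizing that the weight $\sigma_1$ must be used to \emph{absorb} (rather than to Gronwall-iterate) the non-integrable singularities $\|\bu_H(s)\|_2^2\sim s^{-1}$ and $\|\bu_{H,s}(s)\|^2\sim s^{-1}$ inherited from the non-smooth initial data. The remaining steps are routine modifications of the corresponding arguments in Lemmas~\ref{est.u} and \ref{est1.uh}.
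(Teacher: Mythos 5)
Your proposal is correct and follows essentially the route the paper intends: Lemma \ref{est1.uH} is given only the one-line proof ``similar to Lemma \ref{est1.uh}'', which itself reduces to the weighted-in-time testing scheme of Lemma \ref{est.u} (test the once- and twice-differentiated equation with $\sigma(t)\bu_{H,t}$, $\sigma_1(t)\td_H\bu_{H,t}$, $\sigma_2(t)\bu_{H,tt}$, then recover the $\|\bu_{H,t}\|_2$ bound from the differentiated equation via the discrete Stokes operator), exactly as you do, including the key point that the singular weights must be absorbed using (\ref{uH03}) rather than Gronwall-iterated. The only bookkeeping omission is in your Step 3: the term $\sigma_{2,t}(t)\|\bu_{H,tt}\|^2$ requires the intermediate bound $\int_0^t\sigma_1(s)\|\bu_{H,ss}(s)\|^2\,ds\le Ke^{2\alpha t}$ (the paper's template obtains it by also testing with $\sigma_1(t)\bu_{tt}$), but this follows from your Steps 1--2 together with the once-differentiated equation, so it is easily filled and not a genuine gap.
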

\noindent For the {\it a priori} estimates of $\bu^h$, we present another lemma.
\begin{lemma}\label{est.2uh}
Under the assumptions of Lemma \ref{est.uH}, the solution $\bu^h$ of (\ref{2lvlJ2}) satisfies, for $t>0,$
\begin{eqnarray}
\|\bu^h(t)\|+e^{-2\alpha t}\int_0^t e^{2\alpha s}\|\bu^h(t)\|_1^2~ds \le K, \label{2uh01} \\
\|\bu^h(t)\|_1+e^{-2\alpha t}\int_0^t e^{2\alpha s}\|\bu^h(t)\|_2^2~ds \le K, \label{2uh02}
\end{eqnarray}
\end{lemma}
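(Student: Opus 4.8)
The plan is to mimic the energy-method proofs of Lemmas \ref{est.uh} and \ref{est.uH}, noting that (\ref{2uh01})--(\ref{2uh02}) is in fact \emph{easier} than those lemmas because (\ref{2lvlJ2}) is \emph{linear} in $\bu^h$: the troublesome nonlinearity $b(\bu^h,\bu^h,\cdot)$ has been replaced by the known forcing term $-b(\bu_H,\bu_H,\bphi_h)$, which by Lemma \ref{est.uH} is already under control. First I would absorb this term into an effective right-hand side: writing $\tilde{\f}:=\f$ and treating $b(\bu_H,\bu_H,\bphi_h)$ as an additional load, I would choose $\bphi_h=e^{2\alpha t}\bu^h=e^{\alpha t}\hbu^h$ in (\ref{2lvlJ2}) (with $\hbu^h:=e^{\alpha t}\bu^h$), use the antisymmetry $b(\bu^h_h,\bw_h,\bw_h)=0$ so no nonlinear self-interaction appears, and estimate $|b(\bu_H,\bu_H,\hbu^h)|\le K\|\bu_H\|^{1/2}\|\bu_H\|_1^{1/2}\|\bu_H\|_1\|\hbu^h\|_1^{1/2}\|\hbu^h\|^{1/2}$ (or any of the variants in Lemma \ref{nonlin}), followed by Young's inequality to kick back a fraction of $\nu\|\hbu^h\|_1^2$. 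After a kickback argument, integration in time, and multiplication by $e^{-2\alpha t}$, the remaining integral $e^{-2\alpha t}\int_0^t e^{2\alpha s}\|\bu_H(s)\|_1^2\,\|\bu_H(s)\|\,\|\bu_H(s)\|_1\,ds$ is bounded by $K$ using (\ref{uH01})--(\ref{uH02}) of Lemma \ref{est.uH}; together with the assumption $\bu^h(0)=P_h\bu_0$ and $\|P_h\bu_0\|\le\|\bu_0\|\le M_0$ this yields (\ref{2uh01}).

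For (\ref{2uh02}) I would take $\bphi_h=e^{2\alpha t}(-\td_h)\bu^h$ (equivalently test with the discrete Stokes operator applied to $\bu^h$, as in (\ref{est.u03})), giving
\begin{align*}
\frac{d}{dt}\|\hbu^h\|_1^2+\nu\|\td_h\hbu^h\|^2 \le C\|\hbu^h\|_1^2+C\|\f\|^2+C\,|b(\bu_H,\bu_H,(-\td_h)\hbu^h)|,
\end{align*}
and I would bound the last term by $K\|\bu_H\|_1\|\bu_H\|_2\|\td_h\hbu^h\|$ (using Lemma \ref{nonlin}), again absorbing $\tfrac{\nu}{2}\|\td_h\hbu^h\|^2$ by Young's inequality. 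Integrating, multiplying by $e^{-2\alpha t}$, and invoking $e^{-2\alpha t}\int_0^t e^{2\alpha s}\|\bu_H(s)\|_2^2\,ds\le K$ from (\ref{uH02}) together with the boundedness $\|\bu_H(s)\|_1\le K$ produces the bound on $\|\bu^h(t)\|_1$ and on $e^{-2\alpha t}\int_0^t e^{2\alpha s}\|\bu^h(s)\|_2^2\,ds$, once the first part (\ref{2uh01}) is used to control the $\|\hbu^h\|_1^2$ integral via Gronwall's Lemma. As in Lemmas \ref{est.u}--\ref{est.uH}, one should handle the time-origin carefully: since $\bu_0\in\bJ_1$ only, the estimate in the $\|\cdot\|_1$-norm is uniform down to $t=0$, but any $\|\cdot\|_2$-type quantity of $\bu^h$ pointwise in $t$ would carry the weight $(\tau^*(t))^{1/2}$; the stated lemma asks only for the \emph{integrated} $\bL^2(\bH^2)$-type bound, which needs no such weight.

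The main obstacle is purely bookkeeping rather than conceptual: one must verify that the nonlinear forcing $b(\bu_H,\bu_H,\cdot)$ is handled \emph{without} ever differentiating $\bu_H$ in time and without needing more regularity of $\bu_H$ than Lemma \ref{est.uH} provides — in particular the second estimate must not require $\|\bu_H(t)\|_2$ pointwise near $t=0$ (which is only $O(t^{-1/2})$), so the $\|\bu_H\|_2$ factor must appear \emph{inside} a time integral against $e^{2\alpha s}$, exactly where (\ref{uH02}) delivers it. Choosing the right inequality from Lemma \ref{nonlin} at each step (so that the uncontrolled high-norm factor always lands on $\bu^h$ and gets absorbed, while $\bu_H$ appears only in norms the a priori estimates bound) is the one place where a little care is needed; everything else is a routine kickback-plus-Gronwall argument identical in structure to the proofs already sketched for Lemmas \ref{est.u}, \ref{est.uh} and \ref{est.uH}.
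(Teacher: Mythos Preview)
Your proposal is correct and follows essentially the same route as the paper: test (\ref{2lvlJ2}) with $e^{2\alpha t}\bu^h$ for (\ref{2uh01}) and with $e^{2\alpha t}(-\td_h)\bu^h$ for (\ref{2uh02}), bound $b(\bu_H,\bu_H,\cdot)$ via Lemma~\ref{nonlin}, absorb the high norm of $\bu^h$ by Young's inequality, and close using the a~priori bounds of Lemma~\ref{est.uH}. The only cosmetic difference is that the paper handles the $\alpha\|\hbu^h\|_1^2$ term in the second estimate by Poincar\'e (absorbing it into $(\nu-\alpha/\lambda_1)\|\hbu^h\|_2^2$) rather than via (\ref{2uh01}) and Gronwall, and uses the variant $|b(\bu_H,\bu_H,\td_h\bu^h)|\le C\|\bu_H\|^{1/2}\|\bu_H\|_2^{1/2}\|\bu_H\|_1\|\bu^h\|_2$ from Lemma~\ref{nonlin}; both choices lead to the same conclusion.
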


\begin{proof}
Given $\bu_H$ along with the estimates of Lemma (\ref{est.uH}), we choose $\bphi_h= e^{2\alpha t}\bu^h(t)= e^{\alpha t} \hbu^h(t)$ in (\ref{2lvlJ2}) to obtain
\begin{align}\label{2uh001}
\frac{1}{2}\frac{d}{dt}\|\hbu^h\|^2-\alpha\|\hbu^h\|^2+\nu\|\hbu^h\|_1^2
=(\hf,\hbu^h)-e^{2\alpha t} b(\bu_H,\bu_H,\bu^h).
\end{align}
Use Cauchy-Schwarz inequality, Poincar\'e inequality with first eigenvalue of Stokes operator as the constant and Young's inequality, we have
\begin{align}\label{2uh002}
(\hf,\hbu^h) \le \|\hf\|\|\hbu^h\| \le \frac{1}{\lambda_1^{1/2}}\|\hf\|\|\hbu^h\|_1 
\le \frac{\nu}{4}\|\hbu^h\|_1^2+\frac{1}{\nu\lambda_1}\|\hf\|^2.
\end{align}
From Lemmas \ref{nonlin} and \ref{est.uH}, we find that
\begin{align*}
b(\bu_H,\bu_H,\bu^h) & \le \|\bu_H\|^{1/2}\|\bu_H\|_1^{3/2}\|\bu^h\|^{1/2}\|\bu^h\|_1^{1/2}
+\|\bu_H\|\|\bu_H\|_1\|\bu^h\|_1 \nonumber \\
& \le K+\frac{\nu}{4}\|\bu^h\|_1^2
\end{align*}
Therefore,
\begin{equation}\label{2uh003}
e^{2\alpha t} b(\bu_H,\bu_H,\bu^h) \le Ke^{2\alpha t}+\frac{\nu}{4}\|\hbu^h\|_1^2.
\end{equation}
Putting the estimates (\ref{2uh002})-(\ref{2uh003}) in (\ref{2uh001}) gives us
\begin{align*}
\frac{d}{dt}\|\hbu^h\|^2-2\alpha\|\hbu^h\|^2+\nu\|\hbu^h\|_1^2 \le Ke^{2\alpha t} +\frac{1}{\nu\lambda_1} \|\hf\|^2.
\end{align*}
Use Poincar\'e inequality to get
\begin{align}\label{2uh004}
\frac{d}{dt}\|\hbu^h\|^2+\big(\nu-\frac{2\alpha}{\lambda_1}\big)\|\hbu^h\|_1^2 \le Ke^{2\alpha t} +\frac{1}{\nu\lambda_1} \|\hf\|^2.
\end{align}
Since $0<\alpha< \nu\lambda_1/2$, we have that $\nu-2\alpha/\lambda_1>0$. \\
Integrate (\ref{2uh004}) with respect to time.
\begin{align*}
\|\hbu^h(t)\|^2+\big(\nu-\frac{2\alpha}{\lambda_1}\big)\int_0^t \|\hbu^h(s)\|_1^2 ds
\le Ke^{2\alpha t}.
\end{align*}
Multiply by $e^{-2\alpha t}$ to conclude (\ref{2uh01}). \\
For the next estimate, we choose $\bphi_h= e^{2\alpha t}\td_h\bu^h(t)$ in (\ref{2lvlJ2}) and proceed as above. For the non-linear term, we again follow similar proof technique of (\ref{nonlin03}) to obtain
\begin{align}\label{2uh005}
b(\bu_H,\bu_H,\td_h\bu^h) \le C\|\bu_H\|^{1/2}\|\bu_H\|_2^{1/2}
\|\bu_H\|_1\|\bu^h\|_2.
\end{align}
These estimates let us have
\begin{align}\label{2uh007}
\frac{1}{2}\frac{d}{dt}\|\hbu^h\|_1^2-\alpha\|\hbu^h\|_1^2+\nu\|\hbu^h\|_2^2 \le \|\hf\|\|\hbu^h\|_2+Ce^{\alpha t} \|\bu_H\|^{1/2}\|\bu_H\|_2^{1/2}\|\bu_H\|_1\|\hbu^h\|_2.
\end{align}
Using Cauchy-Schwarz inequality and Poincar\'e inequality as earlier, we find that
\begin{align}\label{2uh008}
\frac{d}{dt}\|\hbu^h\|_1^2+\big(\nu-\frac{\alpha}{2\lambda_1}\big)\|\hbu^h\|_2^2
\le C\|\hf\|^2+Ce^{2\alpha t}\big\{\|\bu_H\|^2\|\bu_H\|_1^4+\|\bu_H\|_2^2\big\}.
\end{align}
Integrate (\ref{2uh008}) with respect to time, use Lemma \ref{est.uH} and finally multiply by $e^{-2\alpha t}$ to obtain (\ref{2uh02}).
\end{proof}

\section{Error Estimate}
\se

In this section, we present the error estimate for the spatial approximation, that is, two-level finite element approximation. We achieve the desired results through a series of Lemmas. For the sake of convenience, henceforward, we will write $Ke^{Kt}$ simply as $K$.

\noindent
We denote the error, due to two-level method, as $\e=\bu_h-\bu^h$.

\noindent
From the equations (\ref{dwfj}) and (\ref{2lvlJ2}), we have the following error equation:
\begin{align}\label{err}
(\e_t,\bphi_h) +\nu a (\e,\bphi_h)= \Lambda_h(\bphi_h)~~\forall \bphi_h \in \bJ_h,
\end{align}
where
\begin{align}\label{lamb}
\Lambda_h(\bphi_h) & = b(\bu_H,\bu_H,\bphi_h)-b( \bu_h, \bu_h, \bphi_h) \nonumber \\
&= b(\bu_H,\bu_H-\bu_h,\bphi_h)+b(\bu_H-\bu_h,\bu_h,\bphi_h).
\end{align}

\begin{lemma}\label{neg} 
Under the assumptions of Theorem \ref{errest} and with the additional assumption that $\bu^h(0)=\bu_h(0)$ and for $0<\alpha< \nu\lambda_1/2$, the following estimate
\begin{equation}\label{neg1}
\|\e(t)\|_{-1}^2+e^{-2\alpha t}\int_0^t e^{2\alpha s}\|\e(s)\|^2 ds \le KH^4
\end{equation}
holds, for $t>0$.
\end{lemma}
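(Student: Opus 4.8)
The plan is to test the error equation (\ref{err}) with $\bphi_h = e^{2\alpha t}(-\td_h)^{-1}\e$, which produces the differential inequality
\begin{align*}
\frac{1}{2}\frac{d}{dt}\|\hat{\e}\|_{-1}^2 - \alpha\|\hat{\e}\|_{-1}^2 + \nu\|\hat{\e}\|^2 = e^{2\alpha t}\Lambda_h((-\td_h)^{-1}\e),
\end{align*}
where $\hat{\e} = e^{\alpha t}\e$. The right-hand side is the crux. Writing $\bu_H - \bu_h = (\bu_H - \bu) + (\bu - \bu_h) = -(\bu - \bu_H) + \E$ and using the antisymmetric form of $b(\cdot,\cdot,\cdot)$, I would split $\Lambda_h((-\td_h)^{-1}\e)$ exactly as in (\ref{nonlin01})--(\ref{nonlin02}): integrate by parts the term where the derivative falls on $\e$ to move it onto $(-\td_h)^{-1}\e$, and then apply Lemma \ref{nonlin} termwise. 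The key difference from Theorem \ref{errest} is that the ``source'' quantity here is $\bu - \bu_H$ (plus lower-order $\E=\bu_h-\bu^h$ terms that get absorbed), so I would use the already-established $L^2(\bL^2)$-type bound on the Galerkin error on the \emph{coarse} grid, namely (\ref{err.l2l2}) with $h$ replaced by $H$: $e^{-2\alpha t}\int_0^t e^{2\alpha s}\|(\bu-\bu_H)(s)\|^2\,ds \le KH^4$, together with the a priori bounds $\|\bu_H\|_1, \|\bu_h\|_1, \|\bu\|_1 \le K$ from Lemmas \ref{est.u}, \ref{est.uh}, \ref{est.uH}.

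The termwise estimates should yield, after Young's inequality with a small parameter $\varepsilon$ on the $\|\e\|^2$-norms,
\begin{align*}
e^{2\alpha t}\Lambda_h((-\td_h)^{-1}\e) \le \varepsilon\|\hat{\e}\|^2 + K\|\hat{\e}\|_{-1}^2 + Ke^{2\alpha t}\|(\bu-\bu_H)\|^2,
\end{align*}
using the interpolation $\|\e\|_{-1} \le \|\hat{\e}\|_{-1}$ type manipulations and the fact that $\|\e\| \le \|\e\|_1$ can be controlled by the a priori estimates when needed (the crude terms $\|\bu_H-\bu_h\|\le \|\bu_H\|+\|\bu_h\|\le K$ only appear multiplying $\|\e\|_{-1}^{1/2}\|\e\|^{1/2}$-type factors and so feed into the $\varepsilon\|\hat\e\|^2 + K\|\hat\e\|_{-1}^2$ bucket). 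Choosing $\varepsilon$ small to kick back against $\nu\|\hat{\e}\|^2$, and absorbing the $-\alpha\|\hat\e\|_{-1}^2$ term into the $K\|\hat\e\|_{-1}^2$ term, I get
\begin{align*}
\frac{d}{dt}\|\hat{\e}\|_{-1}^2 + \nu\|\hat{\e}\|^2 \le K\|\hat{\e}\|_{-1}^2 + Ke^{2\alpha t}\|(\bu-\bu_H)(t)\|^2.
\end{align*}
Integrating from $0$ to $t$, using $\e(0)=0$ (since $\bu^h(0)=\bu_h(0)$) so $\|\hat\e(0)\|_{-1}=0$, and applying (\ref{err.l2l2}) (coarse-grid version) to the forcing integral gives $\|\hat\e(t)\|_{-1}^2 + \nu\int_0^t\|\hat\e(s)\|^2\,ds \le K\int_0^t\|\hat\e(s)\|_{-1}^2\,ds + KH^4$. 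Gronwall's Lemma then closes the argument and, after multiplying through by $e^{-2\alpha t}$, yields (\ref{neg1}) with the constant of the form $Ke^{Kt}$ (absorbed into $K$ per the section's convention).

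The main obstacle I anticipate is bookkeeping in the nonlinear splitting: ensuring that every one of the four (or more) pieces arising from $\Lambda_h((-\td_h)^{-1}\e)$ — in particular the integration-by-parts piece analogous to (\ref{nonlin02}) involving $\nabla\cdot\bu_H$ and the transferred derivative — is bounded either by $\varepsilon\|\hat\e\|^2 + K\|\hat\e\|_{-1}^2$ or by a clean multiple of $e^{2\alpha t}\|(\bu-\bu_H)\|^2$, without generating any term that requires an $L^2(\bL^2)$ bound on $\|(\bu-\bu_H)\|_1$ (which would only give $H^2$, not $H^4$). The reason this works is that $\e$ always enters these terms through the factor $(-\td_h)^{-1}\e$ or through low regularity norms, so one extra power of $h$-independent smallness is available; the coarse Galerkin error $\bu-\bu_H$ carries the full $H^2$ in $\bL^2$, and squaring it inside the time integral delivers $H^4$. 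One should also double-check that the lower-order terms in $\bu_H-\bu_h$ that involve $\E$ itself (rather than $\bu-\bu_H$) are genuinely absorbable; since $\|\E\|$ enters only linearly multiplying $\|\e\|_{-1}^{1/2}\|\e\|^{1/2}$, Young's inequality with the split $\varepsilon\|\e\|^2 + K\|\e\|_{-1}^2 + K\|\E\|^2$ reduces them to the same coarse-grid forcing via (\ref{err.l2l2}).
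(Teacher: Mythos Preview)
Your proposal is correct and follows essentially the same route as the paper: test (\ref{err}) with $e^{2\alpha t}(-\td_h)^{-1}\e$, bound $\Lambda_h((-\td_h)^{-1}\e)$ via the machinery of (\ref{nonlin01})--(\ref{nonlin03}) by $K\|\bu_H-\bu_h\|\,\|\e\|$, absorb with Young, integrate, feed in (\ref{err.l2l2}) on both the coarse and fine grids, and close with Gronwall. The paper keeps $\bu_H-\bu_h$ intact and only splits it as $(\bu-\bu_h)-(\bu-\bu_H)$ at the time-integral stage (see (\ref{est.hH})), which is slightly tidier than your split at the differential-inequality level; also, your parenthetical ``$\E=\bu_h-\bu^h$'' is a slip --- in the paper's notation $\E=\bu-\bu_h$, and $\bu_h-\bu^h=\e$.
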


\begin{proof}
Choose $\bphi_h=e^{2\alpha t}(-\td_h)^{-1}\e(t)= e^{\alpha t}(-\td_h)^{-1}\he(t)$ in (\ref{err}) to obtain
\begin{equation}\label{neg01}
\frac{1}{2}\frac{d}{dt}\|\he\|_{-1}^2-\alpha\|\he\|_{-1}^2+\nu\|\he\|^2= e^{2\alpha t}\Lambda_h((-\td_h)^{-1}\e).
\end{equation}
Following (\ref{nonlin03}) and using Lemmas \ref{est.uh} and \ref{est.uH}, we find
\begin{align*}
\Lambda((-\td_h)^{-1}\e) &= b(\bu_H,\bu_H-\bu_h,(-\td_h)^{-1}\e)+b(\bu_H-\bu_h,\bu_h,(-\td_h)^{-1}\e) \\
& \le \|\bu_H-\bu_h\|\|\e\|\big\{\|\bu_H\|_1+\|\bu_h\|_1\big\}
\le K\|\bu_H-\bu_h\|\|\e\|.
\end{align*}
Incorporate this in (\ref{neg01}).
\begin{equation}\label{neg02}
\frac{d}{dt}\|\he\|_{-1}^2+\nu\|\he\|^2 \le 2\alpha\|\he\|_{-1}^2+K\|\hbu_H-\hbu_h\|^2.
\end{equation}
Integrate (\ref{neg02}) with respect to time. Using (\ref{err.l2l2}), we first observe that
\begin{align}\label{est.hH}
\int_0^t \|(\hbu_H-\hbu_h)(s)\|^2 ds \le \int_0^t \|(\hbu-\hbu_H)(s)\|^2 ds
+\int_0^t \|(\hbu-\hbu_h)(s)\|^2 ds \le Ke^{2\alpha t}H^4.
\end{align}
And hence
\begin{equation}\label{neg03}
\|\he(t)\|_{-1}^2+\nu\int_0^t \|\he(s)\|^2 ds \le 2\alpha\int_0^t \|\he(s)\|_{-1}^2 ds +Ke^{2\alpha t}H^4.
\end{equation}
Multiply by $e^{-2\alpha t}$ and use Gronwall's Lemma to complete the rest of the proof.
\end{proof}

\begin{lemma}\label{pee} 
Under the assumptions of Lemma \ref{neg}, the following estimate
\begin{equation}\label{pee1}
\tau^*(t)\|\e(t)\|^2+e^{-2\alpha t}\int_0^t \sigma(s)\|\e(s)\|_1^2 ds \le K(t)H^4
\end{equation}
holds, for $t>0$, where $\sigma(t)=\tau^*(t)e^{2\alpha t},~\tau^*(t)=\min\{1,t\}$.
\end{lemma}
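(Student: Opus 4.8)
The plan is to test the error equation~(\ref{err}) with $\bphi_h=\sigma(t)\e(t)$, mimicking the $L^\infty(\bL^2)$-estimate for $\bta$ in Theorem~\ref{errest}, and to absorb the arising singularity through the already-established negative-norm estimate of Lemma~\ref{neg}. Inserting $\bphi_h=\sigma(t)\e$ and using $\sigma_t(t)\le Ce^{2\alpha t}$ gives
\begin{align*}
\frac{d}{dt}\big\{\sigma(t)\|\e\|^2\big\}+2\nu\sigma(t)\|\e\|_1^2 = \sigma_t(t)\|\e\|^2+2\sigma(t)\Lambda_h(\e).
\end{align*}
The first term on the right is $\le Ce^{2\alpha t}\|\e\|^2=C\|\he\|^2$, which after integration is controlled by Lemma~\ref{neg} (the $e^{-2\alpha t}\int_0^t e^{2\alpha s}\|\e\|^2\,ds\le KH^4$ bound). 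So the whole game is the nonlinear term $\Lambda_h(\e)$.

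For that, I would write $\bu_H-\bu_h=(\bu_H-\bu)+(\bu-\bu_h)=-\bE_H+\E$, where $\bE_H=\bu-\bu_H$ is the coarse Galerkin error (so $\bu_H-\bu_h$ satisfies, via Theorem~\ref{errest} on the coarse grid, $\|\bu_H-\bu_h\|+\|\bu-\bu_h\|\le K\,\|(\bu_H-\bu_h)\|\le K(t^{-1/2}H^2)$, and in the integrated $L^2(\bL^2)$-sense $\int_0^t e^{2\alpha s}\|(\hbu_H-\hbu_h)(s)\|^2\,ds\le Ke^{2\alpha t}H^4$ as in~(\ref{est.hH})). Then, expanding $\Lambda_h(\e)=b(\bu_H,\bu_H-\bu_h,\e)+b(\bu_H-\bu_h,\bu_h,\e)$ and using the antisymmetry $b(\bu_H,\e,\e)=0$ only after further splitting, I would estimate with Lemma~\ref{nonlin}:
\begin{align*}
b(\bu_H,\bu_H-\bu_h,\e) &\le K\|\bu_H-\bu_h\|^{1/2}\|\bu_H-\bu_h\|_1^{1/2}\|\bu_H\|_1\|\e\|^{1/2}\|\e\|_1^{1/2}, \\
b(\bu_H-\bu_h,\bu_h,\e) &\le K\|\bu_H-\bu_h\|\|\bu_h\|_2^{1/2}\|\e\|^{1/2}\|\e\|_1^{1/2},
\end{align*}
or, preferably, a variant that keeps one full factor of $\|\e\|_1$ so it can be kicked back: e.g.\ $|b(\bu_H-\bu_h,\bu_h,\e)|\le K\|\bu_H-\bu_h\|\,\|\bu_h\|_2^{1/2}\,\|\bu_h\|_1^{1/2}\,\|\e\|_1$ wait — more simply $|b(\bu_H-\bu_h,\bu_h,\e)|\le K\|\bu_H-\bu_h\|\,\|\nabla\bu_h\|\,\|\e\|^{1/2}\|\e\|_1^{1/2}$ is too weak near $t=0$; the clean route is to bound $\|\bu_h\|_2$ by $(\tau^*)^{-1/2}$ from Lemma~\ref{est.uh}, yielding after Young's inequality
\begin{align*}
2\sigma(t)\Lambda_h(\e) \le \varepsilon\,\sigma(t)\|\e\|_1^2 + K\sigma(t)\|\e\|^2 + K\,\frac{\sigma(t)}{\tau^*(t)}\,\|\bu_H-\bu_h\|^2 \le \varepsilon\,\sigma(t)\|\e\|_1^2 + K e^{2\alpha t}\|\e\|^2 + K e^{2\alpha t}\|\bu_H-\bu_h\|^2,
\end{align*}
where the last step uses $\sigma(t)/\tau^*(t)=e^{2\alpha t}$ and $\sigma(t)\le e^{2\alpha t}$. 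Choosing $\varepsilon$ small and kicking back $\varepsilon\sigma\|\e\|_1^2$ into the left side leaves
\begin{align*}
\frac{d}{dt}\big\{\sigma(t)\|\e\|^2\big\}+\nu\sigma(t)\|\e\|_1^2 \le K\big(\|\he\|^2+\|\hbu_H-\hbu_h\|^2\big).
\end{align*}

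Integrating from $0$ to $t$ (the boundary term at $0$ vanishes since $\sigma(0)=0$, which is exactly why no initial-data mismatch appears — recall $\bu^h(0)=\bu_h(0)$ so even $\|\e(0)\|=0$), and applying Lemma~\ref{neg} to $\int_0^t\|\he\|^2\,ds$ together with~(\ref{est.hH}) to $\int_0^t\|\hbu_H-\hbu_h\|^2\,ds$, both of which are $\le Ke^{2\alpha t}H^4=K(t)H^4$, gives
\begin{align*}
\sigma(t)\|\e(t)\|^2+\nu\int_0^t\sigma(s)\|\e(s)\|_1^2\,ds \le K(t)H^4.
\end{align*}
Dividing by $e^{2\alpha t}$ and recalling $\sigma(t)=\tau^*(t)e^{2\alpha t}$ yields precisely~(\ref{pee1}). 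The main obstacle is handling the nonlinear term $\Lambda_h(\e)$ near $t=0$: because of the non-smooth data we only control $\|\bu_h(t)\|_2$ and $\|\bu_H(t)\|_2$ like $(\tau^*(t))^{-1/2}$, so the term involving two spatial derivatives of $\bu_h$ produces a factor $(\tau^*)^{-1/2}$; the weight $\sigma(t)=\tau^*(t)e^{2\alpha t}$ is chosen just heavy enough that $\sigma(t)\cdot(\tau^*(t))^{-1/2}\cdot\|\bu_H-\bu_h\|\cdot\|\e\|^{1/2}\|\e\|_1^{1/2}$, after Young's inequality splitting the $\|\e\|_1^{1/2}$ off, leaves an integrable power of $\tau^*$ on the data side while only costing $\varepsilon\sigma\|\e\|_1^2$ on the energy side. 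Getting these exponents to balance — and verifying that every $\Lambda_h$ sub-term admits a form of Lemma~\ref{nonlin} with at most one half-power of $\|\e\|_1$ and no worse than $(\tau^*)^{-1/2}$ from the $\bu_h,\bu_H$ factors — is the delicate bookkeeping step, entirely parallel to the estimate~(\ref{nonlin04})–(\ref{bta.l2a}) in Theorem~\ref{errest}.
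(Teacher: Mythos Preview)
Your approach is correct and essentially identical to the paper's: test (\ref{err}) with $\bphi_h=\sigma(t)\e$, bound $\Lambda_h(\e)$ via Lemma~\ref{nonlin} together with $\|\bu_H\|_2,\|\bu_h\|_2\le K(\tau^*)^{-1/2}$ so that the singularity is absorbed by the weight $\sigma(t)$, kick back $\|\e\|_1^2$, integrate, and close with Lemma~\ref{neg} and (\ref{est.hH}). The only cosmetic difference is that the paper, following (\ref{nonlin05}), obtains the cleaner bound $\Lambda_h(\e)\le K\|\bu_H-\bu_h\|\,\|\e\|_1\{\|\bu_H\|_2^{1/2}+\|\bu_h\|_2^{1/2}\}$ with a single full factor of $\|\e\|_1$ and no residual $\|\e\|^2$ term, but your extra $K\sigma(t)\|\e\|^2\le K\|\he\|^2$ is harmless since it is already controlled by Lemma~\ref{neg}.
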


\begin{proof}
Choose $\bphi_h=\sigma(t)\e(t)$ in (\ref{err}) to obtain
\begin{align}\label{pee01}
\frac{d}{dt}\big\{\sigma(t)\|\e\|^2\big\}+2\nu~\sigma(t)\|\e\|_1^2= \sigma_t(t)\|\e\|^2
+2\sigma(t)\Lambda_h(\e).
\end{align}
To estimate the non-linear term, we follow (\ref{nonlin05}) and use Lemmas \ref{est.uH} and \ref{est.uh}.
\begin{align}\label{pee.nonlin}
\Lambda_h(\e) &\le C\|\bu_H-\bu_h\|\|\e\|_1\big\{\|\bu_H\|_1^{1/2}\|\bu_H\|_2^{1/2} +\|\bu_h\|_1^{1/2}\|\bu_h\|_2^{1/2}\big\} \nonumber \\
&\le K\|\bu_H-\bu_h\|\|\e\|_1\big\{\|\bu_H\|_2^{1/2}+\|\bu_h\|_2^{1/2}\big\}.
\end{align}
And hence, we find from (\ref{pee01})
\begin{align*}
\frac{d}{dt}\big\{\sigma(t)\|\e\|^2\big\}+2\nu~\sigma(t)\|\e\|_1^2 \le C\|\he\|^2
+K\sigma(t)\|\e\|_1\|\bu_H-\bu_h\|\big\{\|\bu_H\|_2^{1/2} +\|\bu_h\|_2^{1/2}\big\}.
\end{align*}
Use Cauchy-Schwarz inequality and then kickback argument to yield
\begin{align}\label{pee02}
\frac{d}{dt}\big\{\sigma(t)\|\e\|^2\big\}+\nu~\sigma(t)\|\e\|_1^2 \le C\|\he\|^2
+K\sigma(t)\|\bu_H-\bu_h\|^2\big\{\|\bu_H\|_2 +\|\bu_h\|_2\big\}.
\end{align}
Use Lemmas \ref{est.uH} and \ref{est.uh} to estimate the last term of (\ref{pee02}) as follows:
$$ K\sigma(t)\|\bu_H-\bu_h\|^2\big\{\|\bu_H\|_2 +\|\bu_h\|_2\big\}
 \le K\|\hbu_H-\hbu_h\|^2. $$
We have used the fact that $\tau^*(t) \le 1$. Incorporate this in (\ref{pee02}) and integrate in time. Use (\ref{est.hH}) to estimate the last term.
\begin{align}\label{pee03}
\sigma(t)\|\e(t)\|^2+\nu\int_0^t \sigma(s)\|\e(s)\|_1^2 ds \le C\int_0^t \|\he(s)\|^2 ds +Ke^{2\alpha t}H^4.
\end{align}
Now use Lemma \ref{neg} and then multiply the resulting inequality by $e^{-2\alpha t}$ to complete the rest of the proof.
\end{proof}

\begin{lemma}\label{eeu} 
Under the assumptions of Lemma \ref{neg}, the following estimate
\begin{equation}\label{eeu1}
(\tau^*(t))^2\|\e(t)\|_1^2+e^{-2\alpha t}\int_0^t \sigma_1(s)\|\e_s(s)\|^2 ds \le K(t)H^4
\end{equation}
holds, for $t>0$, where $\sigma_1(t)=(\tau^*(t))^2 e^{2\alpha t}$.
\end{lemma}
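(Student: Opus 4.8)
\textbf{Proof proposal for Lemma \ref{eeu}.}
The plan is to follow the same bootstrap strategy that worked for the classical Galerkin error in Theorem \ref{errest} and for Lemma \ref{pee}, namely to test the error equation (\ref{err}) with a carefully weighted multiple of $\e_t$, pick up a $\frac{d}{dt}\{\sigma_1(t)\|\e\|_1^2\}$ term, integrate in time, and close the resulting inequality using the lower-order estimates already established in Lemmas \ref{neg} and \ref{pee}. Concretely, I would choose $\bphi_h = \sigma_1(t)\e_t(t)$ in (\ref{err}), which yields
\begin{align*}
\sigma_1(t)\|\e_t\|^2+\frac{\nu}{2}\frac{d}{dt}\big\{\sigma_1(t)\|\e\|_1^2\big\} = \frac{\nu}{2}\sigma_{1,t}(t)\|\e\|_1^2+\sigma_1(t)\Lambda_h(\e_t).
\end{align*}
The first term on the right is harmless since $\sigma_{1,t}(t) \le C\sigma(t)$, so it is controlled by $\int_0^t\sigma(s)\|\e(s)\|_1^2\,ds$, which is $\le K(t)H^4$ by Lemma \ref{pee} after multiplying through by $e^{-2\alpha t}$.

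The real work is the nonlinear term $\sigma_1(t)\Lambda_h(\e_t)$. Writing $\Lambda_h(\e_t)=b(\bu_H,\bu_H-\bu_h,\e_t)+b(\bu_H-\bu_h,\bu_h,\e_t)$ and using the skew-symmetric splitting of $b(\cdot,\cdot,\cdot)$ together with Lemma \ref{nonlin} (the bounds that place one derivative on the middle argument and the $\bL^4$--$\bL^4$ product estimate), one bounds $\Lambda_h(\e_t)$ by a constant times $\|\bu_H-\bu_h\|_1\,\|\e_t\|\,\big(\|\bu_H\|_2^{1/2}\|\bu_H\|_1^{1/2}+\|\bu_h\|_2^{1/2}\|\bu_h\|_1^{1/2}\big)$ or a similar product; the point is that $\|\e_t\|$ appears linearly and to the first power, so one absorbs $\varepsilon\sigma_1(t)\|\e_t\|^2$ into the left-hand side by Young's inequality and is left with $K\sigma_1(t)\|\bu_H-\bu_h\|_1^2\,\big(\|\bu_H\|_2+\|\bu_h\|_2\big)$. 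Invoking $(\tau^*(t))^{1/2}\|\bu_H\|_2\le K$ and $(\tau^*(t))^{1/2}\|\bu_h\|_2\le K$ from Lemmas \ref{est.uh} and \ref{est.uH}, this reduces to $K(\tau^*(t))^{3/2}e^{2\alpha t}\|\bu_H-\bu_h\|_1^2$, which after dropping the extra powers of $\tau^*\le 1$ is $\le Ke^{2\alpha t}\|\bu_H-\bu_h\|_1^2$.

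After integrating in time and multiplying by $e^{-2\alpha t}$, the task is therefore to control $e^{-2\alpha t}\int_0^t e^{2\alpha s}\|(\bu_H-\bu_h)(s)\|_1^2\,ds$. I would split $\bu_H-\bu_h=(\bu-\bu_H)-(\bu-\bu_h)$ and handle each piece by the $\bL^2(\bH^1)$-type estimate (\ref{rem.05}) from Lemma \ref{low.est} (applied once at level $H$ and once at level $h$), giving a bound of $K(t)H^2$. This is the step I expect to be the main obstacle: the $\bH^1$-in-time-integrated control of the velocity error is exactly what (\ref{rem.05}) provides, but one must be careful that it holds with the coarse parameter $H$ (which it does, since Lemma \ref{low.est} is a statement about the generic mesh parameter), and one must check that the $H^2$ rate — rather than $H^4$ — is still enough. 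It is: combining everything gives $(\tau^*(t))^2\|\e(t)\|_1^2+e^{-2\alpha t}\int_0^t\sigma_1(s)\|\e_s(s)\|^2\,ds \le K(t)H^2$ at first pass, but iterating once more using the now-available $\|\e\|_1$-bound together with the sharper $\bL^2(\bL^2)$ estimate (\ref{neg1}) (which is $O(H^4)$) in the nonlinear term — i.e. bounding $\|\bu_H-\bu_h\|$ in $\bL^2$ rather than $\|\bu_H-\bu_h\|_1$ wherever Lemma \ref{nonlin} permits trading a derivative onto $\e_t$ or onto the smooth factor — upgrades the right-hand side to $K(t)H^4$, as claimed. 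The delicate bookkeeping is thus in choosing, term by term in $\Lambda_h(\e_t)$, the variant of Lemma \ref{nonlin} that leaves $\|\bu_H-\bu_h\|$ in $\bL^2$ (where we have the $H^4$ rate from Lemma \ref{neg}) rather than in $\bH^1$ (where we only have $H^2$).
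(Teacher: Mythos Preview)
Your overall strategy---test (\ref{err}) with $\sigma_1(t)\e_t$ and control the right-hand side---is the right one, and the bookkeeping on $\sigma_{1,t}$ via Lemma \ref{pee} is fine. The gap is in your treatment of the nonlinear term $\Lambda_h(\e_t)$.

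A direct bound on $\Lambda_h(\e_t)$ cannot produce a term with \emph{both} $\|\e_t\|$ (no gradient) \emph{and} $\|\bu_H-\bu_h\|$ (no gradient): the trilinear form $b(\cdot,\cdot,\cdot)$ carries one explicit spatial derivative, and after any integration by parts in space this derivative must sit on one of the two ``rough'' factors. If it sits on $\e_t$, you get $\|\e_t\|_1$, which you cannot absorb because the left-hand side of your energy identity only contains $\sigma_1(t)\|\e_t\|^2$, not $\sigma_1(t)\|\e_t\|_1^2$. If it sits on $\bu_H-\bu_h$, you are forced to use $\|\bu_H-\bu_h\|_1$, for which only an $O(H^2)$ time-integrated bound is available (via (\ref{rem.05})); your proposed ``iteration'' cannot repair this, because there is simply no variant of Lemma \ref{nonlin} that sheds the gradient from both factors simultaneously. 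So the direct route stalls at $O(H^2)$.

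The missing device is integration by parts \emph{in time} on the nonlinear term: write
\[
\sigma_1(t)\Lambda_h(\e_t)=\frac{d}{dt}\Big\{\sigma_1(t)\big(b(\bu_H,\bu_H-\bu_h,\e)+b(\bu_H-\bu_h,\bu_h,\e)\big)\Big\}-\text{(remainder)},
\]
where the remainder involves $\e$ (not $\e_t$) paired with $\bu_{H,t},\bu_{h,t},\bu_{H,t}-\bu_{h,t}$, together with a $\sigma_{1,t}$-term. Now the spatial derivative can go on $\e$, giving $\|\e\|_1$, which is already $O(H^4)$ under $\int_0^t\sigma(s)\,ds$ by Lemma \ref{pee}; this pairs (after Young) with $\|\bu_H-\bu_h\|$ and $\|\bu_{H,t}-\bu_{h,t}\|$ in $\bL^2$, both controlled at $O(H^4)$ by (\ref{err.l2l2}) and Theorem \ref{errest.t}. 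The boundary term from the total derivative is handled by one more use of Young's inequality and absorbed into $\frac{\nu}{2}\sigma_1(t)\|\e\|_1^2$ on the left, using Theorem \ref{errest} for the pointwise $\|\bu_H-\bu_h\|$. This is what closes the estimate at $O(H^4)$.
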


\begin{proof}
Choose $\bphi_h=\sigma_1(t)\e_t(t)$ in (\ref{err}) to obtain
\begin{align}\label{eeu01}
2\sigma_1(t)\|\e_t\|^2+\nu\frac{d}{dt}\big\{\sigma_1(t)\|\e\|_1^2\big\}= \nu\sigma_{1,t}(t)\|\e\|_1^2+2\sigma_1(t)\Lambda_h(\e_t).
\end{align}
Integrate (\ref{eeu01}) with respect to time. 
\begin{align}\label{eeu04}
2\int_0^t \sigma_1(s)\|\e_s(s)\|^2 ds +\nu\sigma_1(t)\|\e(t)\|_1^2 & \le
C\int_0^t \sigma(s)\|\e(s)\|_1^2 ds+2\int_0^t \sigma_1(s)\Lambda_h(\e_s)~ds.
\end{align}
We rewrite the non-linear terms as follows:
\begin{align*}
\Lambda_h(\e_t) &=b(\bu_H,\bu_H-\bu_h,\e_t)+b(\bu_H-\bu_h,\bu_h,\e_t) \\
&= \frac{d}{dt}\big\{b(\bu_H,\bu_H-\bu_h,\e)+b(\bu_H-\bu_h,\bu_h,\e)\big\}
-b(\bu_{H,t},\bu_H-\bu_h,\e) \\
&-b(\bu_H,\bu_{H,t}-\bu_{h,t},\e)-b(\bu_{H,t}-\bu_{h,t},\bu_h,\e)
-b(\bu_H-\bu_h,\bu_{h,t},\e).
\end{align*}
And hence
\begin{align}\label{eeu05}
\sigma_1(t)\Lambda_h(\e_t) &= \sigma_1(t)\big( b(\bu_H,\bu_H-\bu_h,\e_t) +b(\bu_H-\bu_h,\bu_h,\e_t)\big) \nonumber \\
&= \frac{d}{dt}\big\{\sigma_1(t)\big(b(\bu_H,\bu_H-\bu_h,\e)+b(\bu_H-\bu_h,\bu_h,\e)
\big)\big\} \nonumber \\
&-\sigma_{1,t}(t)\big(b(\bu_H,\bu_H-\bu_h,\e)+b(\bu_H-\bu_h,\bu_h,\e)
\big)-\sigma_1(t)b(\bu_{H,t},\bu_H-\bu_h,\e) \nonumber \\
&-\sigma_1(t)\big(b(\bu_H,\bu_{H,t}-\bu_{h,t},\e)+b(\bu_{H,t}-\bu_{h,t},\bu_h,\e)
+b(\bu_H-\bu_h,\bu_{h,t},\e)\big).
\end{align}
As seen earlier, we have
\begin{eqnarray*}
&&-b(\bu_{H,t},\bu_H-\bu_h,\e)-b(\bu_H-\bu_h,\bu_{h,t},\e) \\
& \le & C\|\bu_H-\bu_h\|\|\e\|_1\big\{\|\bu_{H,t}\|_1^{1/2}\|\bu_{H,t}\|_2^{1/2}
+\|\bu_{h,t}\|_1^{1/2}\|\bu_{h,t}\|_2^{1/2}\big\}
\end{eqnarray*}
Use Lemmas \ref{est1.uh} and \ref{est1.uH} to conclude that
\begin{equation}\label{eeu06}
-b(\bu_{H,t},\bu_H-\bu_h,\e)-b(\bu_H-\bu_h,\bu_{h,t},\e) \le K(\tau^*(t))^{-3/2} \|\bu_H-\bu_h\|\|\e\|_1.
\end{equation}
Similarly
\begin{equation}\label{eeu07}
-b(\bu_H,\bu_H-\bu_h,\e)-b(\bu_H-\bu_h,\bu_h,\e) \le K(\tau^*(t))^{-1/2}
\|\bu_H-\bu_h\|\|\e\|_1
\end{equation}
and
\begin{equation}\label{eeu08}
-b(\bu_H,\bu_{H,t}-\bu_{h,t},\e)-b(\bu_{H,t}-\bu_{h,t},\bu_h,\e) \le K(\tau^*(t))^{-1/2}
\|\bu_{H,t}-\bu_{h,t}\|\|\e\|_1.
\end{equation}
Incorporate (\ref{eeu06})-(\ref{eeu08}) in (\ref{eeu05}) and integrate with respect to time. Re-use (\ref{eeu07}) to find
\begin{align}\label{eeu09}
\int_0^t \sigma_1(s)\Lambda_h(\e_s) &~ds \le \sigma_1(t)\big(b(\bu_H,\bu_H-\bu_h,\e) +b(\bu_H-\bu_h,\bu_h,\e)\big) \nonumber \\
& +K\int_0^t e^{2\alpha s} \big\{(\tau^*(s))^{1/2} \|(\bu_H-\bu_h)(s)\|+(\tau^*(s))^{3/2}
\|(\bu_{H,s}-\bu_{h,s})(s)\|\big\}\|\e(s)\|_1 ds \nonumber \\
\le & Ke^{2\alpha t}(\tau^*(t))^{3/2}\|\bu_H-\bu_h\|\|\e\|_1+C\int_0^t \sigma(s)\|\e(s)\|_1^2 ds \nonumber \\
& +K\int_0^t e^{2\alpha s} \big\{\|(\bu_H-\bu_h)(s)\|^2+(\tau^*(s))^2\|(\bu_{H,s}-\bu_{h,s})(s)\|^2\big\}
~ds \nonumber \\
\le & \frac{\nu}{2}\sigma_1(t)\|\e\|_1^2+K\sigma(t)\|\bu_H-\bu_h\|^2+C\int_0^t \sigma(s) \|\e(s)\|_1^2 ds \nonumber \\
& +K\int_0^t e^{2\alpha s} \big\{\|(\bu_H-\bu_h)(s)\|^2+(\tau^*(s))^2\|(\bu_{H,s}-\bu_{h,s})(s)\|^2\big\}~ds.
\end{align}
Put (\ref{eeu09}) in (\ref{eeu04}) to obtain
\begin{align}\label{eeu10}
2\int_0^t \sigma_1(s)\|\e_s(s)\|^2 ds +\frac{\nu}{2}\sigma_1(t) &\|\e\|_1^2 \le
C\int_0^t \sigma(s)\|\e(s)\|_1^2 ds+K\sigma(t)\|\bu_H-\bu_h\|^2 \nonumber \\
& +K\int_0^t e^{2\alpha s}\big\{\|(\bu_H-\bu_h)(s)\|^2+(\tau^*(s))^2 \|(\bu_{H,s}-\bu_{h,s})(s)\|^2\big\}~ds.
\end{align}
Now, use Lemma \ref{pee}. For the remaining part, rewrite $\bu_H-\bu_h$ as $\bu-\bu_h-(\bu-\bu_H)$. Then use (\ref{err.l2l2}) and Theorems \ref{errest} and \ref{errest.t} to complete the rest of the proof.
\end{proof}

\begin{lemma}\label{prs} 
Under the assumptions of Lemma \ref{neg} and additionally that the assumption $({\bf B2}')$
holds, we have
\begin{equation}\label{prs1}
\|p_h-p^h\| \le K(t)(\tau^*(t))^{-1}H^2.
\end{equation}
\end{lemma}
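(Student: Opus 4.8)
The plan is to subtract the momentum equations (\ref{dwfh}) and (\ref{2lvlH2}), which gives the pressure error identity
$$(p_h-p^h,\nabla\cdot\bphi_h)=(\e_t,\bphi_h)+\nu\,a(\e,\bphi_h)-\Lambda_h(\bphi_h)\qquad\forall\,\bphi_h\in\bH_h,$$
with $\Lambda_h$ as in (\ref{lamb}), and then to recover $\|p_h-p^h\|_{L^2/\R}$ from the inf--sup condition $({\bf B2}')$. Since $\bu_h,\bu^h\in\bJ_h$, one has $\e(t)\in\bJ_h$ and hence $\e_t(t)\in\bJ_h$; moreover $(p_h-p^h,\nabla\cdot P_h\bphi_h)=0$ because $P_h\bphi_h\in\bJ_h$ while $p_h,p^h\in L_h$. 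I would therefore apply the identity with $\bphi_h$ replaced by $\bphi_h-P_h\bphi_h$: the time-derivative term then drops out entirely, because $\e_t\in\bJ_h$ is $L^2$-orthogonal to $\bphi_h-P_h\bphi_h$, leaving
$$(p_h-p^h,\nabla\cdot\bphi_h)=\nu\,a(\e,\bphi_h-P_h\bphi_h)-\Lambda_h(\bphi_h-P_h\bphi_h).$$
By the $H^1$-stability of $P_h$ (already used in the proof of Lemma \ref{E.neg}), $\|\bphi_h-P_h\bphi_h\|_1\le C\|\bphi_h\|_1$, so it suffices to control $\|\e\|_1$ and $\Lambda_h(\bphi)$ for an arbitrary $\bphi\in\bH_0^1$.

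The diffusion term is immediate: $\nu\,a(\e,\bphi_h-P_h\bphi_h)\le\nu\|\e\|_1\|\bphi_h-P_h\bphi_h\|_1\le K(t)(\tau^*(t))^{-1}H^2\|\bphi_h\|_1$ by Lemma \ref{eeu}. For the nonlinear functional I would insert the exact velocity,
$$\Lambda_h(\bphi)=\big[b(\bu_H,\bu_H,\bphi)-b(\bu,\bu,\bphi)\big]-\big[b(\bu_h,\bu_h,\bphi)-b(\bu,\bu,\bphi)\big],$$
and expand each bracket by bilinearity. This produces the two quadratic error terms $b(\bu-\bu_H,\bu-\bu_H,\bphi)$ and $b(\bu-\bu_h,\bu-\bu_h,\bphi)$, plus the combination $b(\bw,\bu,\bphi)+b(\bu,\bw,\bphi)$ with $\bw:=\bu_h-\bu_H$. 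The quadratic terms are bounded by $C\|\bu-\bu_H\|_1^2\|\bphi\|_1$ and $C\|\bu-\bu_h\|_1^2\|\bphi\|_1$, which by Theorem \ref{errest} (used on the coarse and on the fine mesh) are $\le K(t)(\tau^*(t))^{-1}H^2\|\bphi\|_1$ and $\le K(t)(\tau^*(t))^{-1}h^2\|\bphi\|_1$ respectively, and $h<H$. In $b(\bw,\bu,\bphi)$ the factor $\bw$ already carries no derivative, and in $b(\bu,\bw,\bphi)$ the incompressibility $\nabla\cdot\bu=0$ lets me integrate by parts to rewrite it as $-(\bu\cdot\nabla\bphi,\bw)$, again with $\bw$ undifferentiated; using the two-dimensional embeddings $\|\nabla\bu\|_{\bL^4(\Omega)}+\|\bu\|_{\bL^\infty(\Omega)}\le C\|\bu\|_2$ one gets $b(\bw,\bu,\bphi)+b(\bu,\bw,\bphi)\le C\|\bw\|\,\|\bu\|_2\|\bphi\|_1$. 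Now $\|\bw(t)\|\le\|(\bu-\bu_h)(t)\|+\|(\bu-\bu_H)(t)\|\le K(t)t^{-1/2}H^2$ by Theorem \ref{errest}, while $\|\bu(t)\|_2\le K(\tau^*(t))^{-1/2}$ by (\ref{est.u3}); the product of these singular factors is $\le K(t)(\tau^*(t))^{-1}H^2$ in both the regime $t\le1$ and $t>1$. Collecting the bounds gives $\Lambda_h(\bphi)\le K(t)(\tau^*(t))^{-1}H^2\|\bphi\|_1$ for every $\bphi\in\bH_0^1$, in particular for $\bphi_h-P_h\bphi_h$, and $({\bf B2}')$ then yields the asserted estimate for $\|p_h-p^h\|_{L^2/\R}$.

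The main obstacle is to extract the full second order $H^2$ in the bound for $\Lambda_h$ rather than merely $H$: the discrepancy $\bw=\bu_h-\bu_H$ is only $O(H)$ in $\bH^1$ but $O(H^2)$ in $\bL^2$, so every occurrence of $\nabla\bw$ must be eliminated through the antisymmetry of $b$ together with the exact relation $\nabla\cdot\bu=0$ — which is precisely why I insert $\bu$ and peel off $\bw$ rather than working with $\bu_H$ and $\bu_h$ directly, whose divergences do not vanish pointwise. Simultaneously one has to keep careful track of the time weights, so that the product of $\|\bu\|_2\sim(\tau^*)^{-1/2}$ with the $t^{-1/2}$ coming from the non-smooth-data $\bL^2$-error estimates of Theorem \ref{errest} collapses to exactly $(\tau^*)^{-1}$; this is the source of the $(\tau^*(t))^{-1}$ singularity in the final bound. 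An equivalent but longer route would retain the $(\e_t,\bphi_h)$ term and prove $\|\e_t\|_{-1,h}\le K(t)(\tau^*(t))^{-1}H^2$ from the error equation (\ref{err}) in the manner of Lemma \ref{E.neg}; the $P_h$-orthogonality step above merely bypasses that.
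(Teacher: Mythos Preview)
Your argument is correct, and it is genuinely simpler than the route taken in the paper. The paper retains the time-derivative term: it bounds $(p_h-p^h,\nabla\cdot\bphi_h)$ directly by $\|\e_t\|_{-1,h}+\|\e\|_1+\Lambda_h$-terms, then passes to $\|\e_t\|_{-1}$, and estimates the latter by writing $(\e_t,\bphi)=(\e_t,P_h\bphi)+(\e_t,\bphi-P_h\bphi)$, bounding the last piece crudely by $Ch\|\e_t\|\|\bphi\|_1$; this residual $h\|\e_t\|$ is what forces the auxiliary Lemma~\ref{subop} ($\|\e_t\|\le K(\tau^*)^{-1}H$). Your observation that $\e_t\in\bJ_h$ makes $(\e_t,\bphi_h-P_h\bphi_h)$ vanish identically removes this detour entirely, so Lemma~\ref{subop} is not needed for the pressure bound. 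Your treatment of $\Lambda_h$ is also slightly different: where the paper stays with the discrete pair $\bu_H,\bu_h$ and invokes the discrete $\|\cdot\|_2$-bounds from Lemmas~\ref{est.uh} and~\ref{est.uH}, you insert the exact $\bu$ so that $\nabla\cdot\bu=0$ can be used to shift the derivative off $\bw=\bu_h-\bu_H$, buying the full $H^2$ through $\|\bw\|$ rather than $\|\bw\|_1$. Both approaches give the same final order $(\tau^*)^{-1}H^2$; yours is shorter and makes the role of the divergence-free constraint more transparent, while the paper's route has the side benefit of actually establishing the bound on $\|\e_t\|$ (Lemma~\ref{subop}), which may be of independent interest.
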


\begin{proof}
The LBB condition $({\bf B2}')$ tells us that, for $t>0$
\begin{equation}\label{prs01}
\|p_h-p^h\|_{L^2/\R} \le K_0 \sup_{0\neq \bphi_h\in\bH_h} \frac{(p_h-p^h, \nabla\cdot\bphi_h)}{\|\bphi_h\|_1}.
\end{equation}
From (\ref{dwfh}) and (\ref{2lvlH2}), we have, for $\bphi_h\in \bH_h$
\begin{align*}
(p_h-p^h, \nabla \cdot \bphi_h) &=(\e_t,\bphi_h) +\nu a(\e,\bphi_h)-\Lambda_h(\bphi_h) \\
&\le C\|\bphi_h\|_1\big\{\|\e_t\|_{-1,h}+\|\e\|_1\big\}+K(\tau^*(t))^{-1/4} \|\bu_H-\bu_h\|\|\bphi_h\|_1.
\end{align*}
We have estimated $\Lambda_h$ as in (\ref{pee.nonlin}) and have used Lemmas \ref{est.uH} and \ref{est.2uh}.
Using (\ref{prs01}), we obtain, thanks to Lemma \ref{errest} and \ref{eeu}
\begin{align}\label{prs02}
\|p_h-p^h\|_{L^2/\R} \le C\|\e_t\|_{-1,h}+K(\tau^*(t))^{-1}H^2.
\end{align}
Following  (\ref{neg.h}), we observe
\begin{align*}
\|\e_t\|_{-1,h} \le \|\e_t\|_{-1}.
\end{align*}
Keeping in mind that, for $\bphi\in\bH_0^1, P_h\bphi\in\bJ_h$, we use (\ref{err}) to write $(\e_t,\bphi)$ as in (\ref{E.neg01}) and similar to estimates in (\ref{E.neg01a}), we find
\begin{align}\label{prs03}
(\e_t,\bphi) \le C\|\e\|_1\|\bphi\|_1+C\|\bu_H-\bu_h\|\{\|\bu_H\|_1^{1/2}\|\bu_H\|_2^{1/2}
+\|\bu_h\|_1^{1/2}\|\bu_h\|_2^{1/2}\}\|\bphi\|_1+Ch\|\e_t\|\|\bphi\|_1.
\end{align}
Use Lemmas \ref{est.uH}, \ref{est.2uh} and \ref{eeu}. We again rewrite $\bu_H-\bu_h$ as $\bu-\bu_h-(\bu-\bu_H)$ and use Theorem \ref{errest}. Now, for $\bphi\neq 0$, we divide (\ref{prs03}) by $\bphi$ to finally obtain
\begin{align*}
\|\e_t\|_{-1} \le K(\tau^*(t))^{-1}H^2+Ch\|\e_t\|.
\end{align*}
And hence, from (\ref{prs02}), we find
\begin{align*}
\|p_h-p^h\|_{L^2/\R} \le CH\|\e_t\|+K(\tau^*(t))^{-1}H^2.
\end{align*}
Now the following lemma completes the proof.
\end{proof}

\begin{lemma}\label{subop}
Under the assumptions of Lemma \ref{neg}, the following estimate holds, for $t>0.$
$$ \|\e_t\| \le K(\tau^*(t))^{-1}H. $$
\end{lemma}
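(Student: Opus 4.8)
The plan is to estimate $\|\e_t\|$ directly by testing the error equation \eqref{err} with a suitable multiple of $\e_t$ and absorbing the time-derivative of the nonlinear term by a time-integration argument, exactly in the spirit of the proof of Lemma \ref{eeu}. First I would differentiate \eqref{err} in time, or equivalently work with the $\e_t$-equation obtained by formally setting $\bphi_h=\sigma_2(t)\e_t$ with a weight like $\sigma_2(t)=(\tau^*(t))^3 e^{2\alpha t}$ (one power of $\tau^*$ higher than in Lemma \ref{eeu}, to absorb the extra singularity coming from $\bu_{H,t}$, $\bu_{h,t}$). This produces an identity of the form
\begin{align*}
\frac{d}{dt}\big\{\sigma_2(t)\|\e_t\|^2\big\}+2\nu\sigma_2(t)\|\e_t\|_1^2 = \sigma_{2,t}(t)\|\e_t\|^2+2\sigma_2(t)\Lambda_{h,t}(\e_t),
\end{align*}
where $\Lambda_{h,t}$ is the time-derivative of $\Lambda_h$ from \eqref{lamb}, namely $\Lambda_{h,t}(\bphi_h)= b(\bu_{H,t},\bu_H-\bu_h,\bphi_h)+b(\bu_H,\bu_{H,t}-\bu_{h,t},\bphi_h)+b(\bu_{H,t}-\bu_{h,t},\bu_h,\bphi_h)+b(\bu_H-\bu_h,\bu_{h,t},\bphi_h)$.

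Next I would estimate each of the four terms in $\Lambda_{h,t}(\e_t)$ using Lemma \ref{nonlin}, pulling a factor of $\|\e_t\|_1$ (or $\|\e_t\|^{1/2}\|\e_t\|_1^{1/2}$) out so it can be absorbed by the $\nu\sigma_2(t)\|\e_t\|_1^2$ term via Young's inequality and a kickback argument, and bounding the remaining factors by Lemmas \ref{est.uh}, \ref{est1.uh}, \ref{est.uH} and \ref{est1.uH}. This leaves a right-hand side controlled by $K\sigma(t)\|\e_t\|^2$ plus terms involving $\|\bu_H-\bu_h\|^2$ and $(\tau^*)^2\|\bu_{H,t}-\bu_{h,t}\|^2$ with appropriate weights. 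Integrating in time, applying Gronwall's Lemma, and then feeding in the already-proven bounds --- Lemma \ref{eeu} for $e^{-2\alpha t}\int_0^t\sigma_1(s)\|\e_s(s)\|^2\,ds\le KH^4$, the identity \eqref{err.l2l2}, and Theorems \ref{errest} and \ref{errest.t} after rewriting $\bu_H-\bu_h = (\bu-\bu_h)-(\bu-\bu_H)$ and $\bu_{H,t}-\bu_{h,t}$ similarly --- should yield $(\tau^*(t))^3\|\e_t(t)\|^2\le K(t)H^4$, i.e.\ the claimed $\|\e_t(t)\|\le K(\tau^*(t))^{-1}H$ once one checks the power of $\tau^*$ matches (a weight $\sigma_2$ with $(\tau^*)^3$ gives exactly $\tau^{-3/2}\cdot H^2$ under the square root, hence $\tau^{-1}H$ after noting the extra half-power is not needed; more carefully one takes the weight so that the final exponent is $(\tau^*)^2$, giving $\tau^{-1}H$ directly).

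The main obstacle I expect is controlling the two nonlinear terms containing the time-derivatives $\bu_{H,t}$ and $\bu_{h,t}$, since these quantities are only bounded after multiplication by $(\tau^*)^{1/2}$ in $\bH^1$ and by $(\tau^*)^{3/2}$ in $\bH^2$ (Lemmas \ref{est1.uh}, \ref{est1.uH}); getting the bookkeeping of the $\tau^*$-powers right so that the singularity at $t=0$ is no worse than $(\tau^*)^{-1}$ in the final estimate, while still leaving enough of the coercive term $\sigma_2(t)\|\e_t\|_1^2$ to absorb the kickback, is the delicate point. A secondary technical issue is justifying the differentiation of \eqref{err} in time (legitimacy of $\e_{tt}$), which can be handled by working with difference quotients or by the standard regularity of the Galerkin ODE systems, exactly as in the proofs of Lemmas \ref{est.u} and \ref{est1.uh}.
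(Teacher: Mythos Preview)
Your proposal does not prove the stated lemma; it proves a different estimate and then conflates the two. With the weight $\sigma_2(t)=(\tau^*)^3e^{2\alpha t}$ and the $L^2$-based bounds you invoke (Lemma~\ref{eeu}, \eqref{err.l2l2}, Theorem~\ref{errest.t}), the best you can reach is $(\tau^*(t))^3\|\e_t(t)\|^2\le K H^4$, i.e.\ $\|\e_t\|\le K(\tau^*)^{-3/2}H^2$. This is \emph{not} the same as $\|\e_t\|\le K(\tau^*)^{-1}H$: neither implies the other uniformly in $t$ and $H$, and your sentence ``hence $\tau^{-1}H$ after noting the extra half-power is not needed'' is simply wrong. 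Moreover, if you switch to the weight $\sigma_1=(\tau^*)^2e^{2\alpha t}$ while still aiming for $H^4$ on the right, the argument does not close: the term $\sigma_{1,t}(t)\|\e_t\|^2$ produces $\int_0^t\sigma(s)\|\e_s(s)\|^2\,ds$, for which no $O(H^4)$ bound is available (Lemma~\ref{eeu} only controls $\int\sigma_1\|\e_s\|^2$).

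The paper's proof is deliberately \emph{sub-optimal in $H$} in order to be sharper in $t$. It uses the weight $\sigma_1$ (not $\sigma_2$) and, crucially, bounds $\Lambda_{h,t}(\e_t)$ in terms of the $H^1$-norms $\|\bu_H-\bu_h\|_1$ and $\|\bu_{H,t}-\bu_{h,t}\|_1$ rather than the $L^2$-norms. These are then controlled by the lower-order estimates \eqref{rem.05} and Lemma~\ref{low.est}, which give only $O(H^2)$ for the squared quantities. A cascade of auxiliary tests ($\bphi_h=\sigma(t)\e_t$ and $\bphi_h=e^{2\alpha t}\e$ in \eqref{err}) is needed to bound $\int\sigma\|\e_s\|^2$ and $\int e^{2\alpha s}\|\e\|_1^2$ by $KH^2$. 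The outcome is $\sigma_1(t)\|\e_t\|^2\le KH^2$, which is exactly $\|\e_t\|\le K(\tau^*)^{-1}H$. The paper's own Remark following the lemma explains that your $\sigma_2$-route yields $O(t^{-3/2}H^2)$ and was avoided precisely because it worsens the singularity in the subsequent pressure estimate.
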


\begin{proof}
Differentiate the error equation (\ref{err}) with respect to time.
\begin{align}\label{errt}
(\e_{tt},\bphi_h) +\nu a(\e_t,\bphi_h)= \Lambda_{h,t}(\bphi_h)~~\forall \bphi_h \in \bJ_h,
\end{align}
where
\begin{align}\label{lambt}
\Lambda_{h,t}(\bphi_h)=& b(\bu_{H,t},\bu_H-\bu_h,\bphi_h)+b(\bu_H,\bu_{H,t}-\bu_{h,t},
\bphi_h)+b(\bu_{H,t}-\bu_{h,t},\bu_h,\bphi_h) \nonumber \\
&+b(\bu_H-\bu_h,\bu_{h,t},\bphi_h).
\end{align}
Choose $\bphi_h=\sigma_1(t)\e_t$ in (\ref{errt}) to find
\begin{align}\label{subop01}
\frac{d}{dt}\big\{\sigma_1(t)\|\e_t\|^2\big\}+2\nu\sigma_1(t) \|\e_t\|_1^2 \le 
\sigma_{1,t}(t)\|\e_t\|^2+2\sigma_1(t)\Lambda_{h,t}(\e_t).
\end{align}
Similar to (\ref{eeu06})-(\ref{eeu08}), we obtain
\begin{align}\label{subop02}
2\sigma_1(t)\Lambda_{h,t}(\e_t) \le C\sigma_1(t)\|\e_t\|_1\big\{\|\bu_{H,t}\|_1 +\|\bu_{h,t}\|_1\big\}\|\bu_H-\bu_h\|_1 \nonumber \\
+C\sigma_1(t)\|\e_t\|_1\big\{\|\bu_H\|_1+\|\bu_h\|_1\big\}\|\bu_{H,t}-\bu_{h,t}\|_1.
\end{align}
Incorporate (\ref{subop02}) in (\ref{subop01}). Use kickback argument and then integrate with respect to time.
\begin{align}\label{subop02a}
\sigma_1(t)\|\e_t(t)\|^2+\nu\int_0^t \sigma_1(s)\|\e_t\|_1^2 \le C\int_0^t \sigma(s)
\|\e_t(s)\|^2 ds+K\int_0^t \sigma_1(s)\|(\bu_{H,s}-\bu_{h,s})(s)\|_1^2 ds \nonumber \\
+KH^2\int_0^t \sigma(s)\big(\|\bu_{H,s}(s)\|_1^2+\|\bu_{h,s}(s)\|_1^2\big)~ds
\end{align}
Use Lemmas \ref{est.u} and \ref{est.2uh} to bound the last term of (\ref{subop02a}). For the second last term, we rewrite $\bu_{H,s}-\bu_{h,s}$ as $\bu_s-\bu_{h,s}-(\bu_s-\bu_{H,s})$
and use Lemma \ref{low.est}.
\begin{equation}\label{subop03}
\sigma_1(t)\|\e_t(t)\|^2+\nu\int_0^t \sigma_1(s)\|\e_t\|_1^2 \le C\int_0^t \sigma(s)
\|\e_t(s)\|^2 ds+KH^2.
\end{equation}
To estimate the first term on the right hand-side of (\ref{subop03}), we put $\bphi_h=\sigma(t)\e_t$ in (\ref{err}).
\begin{align}\label{subop04}
\sigma(t)\|\e_t\|^2+\frac{\nu}{2}\frac{d}{dt}\{\sigma(t)\|\e\|_1^2\}=\sigma_t(t)\|\e\|_1^2
+\sigma(t)\Lambda_h(\e_t).
\end{align}
We take care of the non-linear term as earlier.
\begin{align*}
\Lambda_h(\e_t) &=b(\bu_H,\bu_H-\bu_h,\e_t)+b(\bu_H-\bu_h,\bu_h,\e_t) \\
&\le C\|\e_t\|\|\bu_H-\bu_h\|_1\big(\|\bu_H\|^{1/2}\|\bu_H\|_2^{1/2}+\|\bu_h\|^{1/2}
\|\bu_h\|_2^{1/2}\big) \\
&+C\|\e_t\|\|\bu_H-\bu_h\|^{1/2}\|\bu_H-\bu_h\|_1^{1/2}\big(\|\bu_H\|_1^{1/2}
\|\bu_H\|_2^{1/2}+\|\bu_h\|_1^{1/2}\|\bu_h\|_2^{1/2}\big) \\
&\le K\|\e_t\|\|\bu_H-\bu_h\|_1\big(\|\bu_H\|_2^{1/2}+\|\bu_h\|_2^{1/2}\big).
\end{align*}
We have used Lemmas \ref{est.uH} and \ref{est.uh}. Incorporate in (\ref{subop04}) and after kickback argument, integrate with respect to time.
\begin{align*}
\int_0^t \sigma(s)\|\e_s(s)\|^2 ds+\sigma(t)\|\e(t)\|_1^2 \le C\int_0^t e^{2\alpha s}
\|\e(s)\|_1^2 ds+K\int_0^t e^{2\alpha s}\|\E(s)\|_1^2 ds.
\end{align*}
Use (\ref{rem.05}) to obtain
\begin{align}\label{subop05}
\int_0^t \sigma(s)\|\e_s(s)\|^2 ds+\sigma(t)\|\e(t)\|_1^2 \le C\int_0^t e^{2\alpha s}
\|\e(s)\|_1^2 ds+KH^2.
\end{align}
To estimate the first term on the right hand-side of (\ref{subop05}), we choose $\bphi_h=
e^{2\alpha t}\e$ in (\ref{err}) to find
\begin{align*}
\frac{1}{2}\frac{d}{dt}\|\he\|^2+\big(\nu-\frac{\alpha}{\lambda_1}\big)\|\he\|_1^2 \le
e^{2\alpha t}\Lambda_h(\e) \le K\|\he\|_1\big\{e^{\alpha t}\|\bu_H-\bu_h\|_1\}.
\end{align*}
Use kickback argument and then integrate with respect to time.
\begin{align*}
e^{2\alpha t}\|\e(t)\|^2+\int_0^t e^{2\alpha s}\|\e(s)\|_1^2 ds \le K\int_0^t e^{2\alpha s}
\|\E(s)\|_1^2 ds.
\end{align*}
We recall that $\e(0)=0.$ Apply (\ref{rem.05}) to get
\begin{equation}\label{subop06}
e^{2\alpha t}\|\e(t)\|^2+\int_0^t e^{2\alpha s}\|\e(s)\|_1^2 ds \le KH^2.
\end{equation}
Now, from (\ref{subop05}), we have
\begin{align}\label{subop07}
\int_0^t \sigma(s)\|\e_s(s)\|^2 ds+\sigma(t)\|\e(t)\|_1^2 \le KH^2.
\end{align}
And from (\ref{subop03})
\begin{equation}\label{subop08}
\sigma_1(t)\|\e_t(t)\|^2+\nu\int_0^t \sigma_1(s)\|\e_t\|_1^2 \le KH^2.
\end{equation}
And this completes the rest of the proof.
\end{proof}

\begin{remark}
In the Lemma \ref{subop}, we have desired and obtained a lower order estimate, which is of $O(H)$ instead of $O(H^2)$. Note that in the proof, we have avoided $O(H^2)$ estimates. For example, for the second last term (\ref{subop02a}), we do not follow the argument that we have used to bound the last term of (\ref{eeu10}). Instead, we have used Lemma \ref{low.est}. An optimal estimate of $\|\e_t\|$ would read $O(t^{-3/2}H^2)$ and which could be achieved by choosing $\bphi_h=\sigma_2(t)\e_t$ in (\ref{errt}), instead of $\sigma_1(t)\e_t$ as in the proof of Lemma \ref{subop}. But this would lead to a higher order singularity of pressure error estimate at $t=0$. Our pressure error estimate (\ref{prs1}) and velocity error estimate (\ref{eeu1}) have same order of singularity at $t=0$, which is not the case in \cite{He04}. Even for smooth initial data, we can obtain error estimates for both velocity and pressure, that have same order of singularity at $t=0$, i.e, of $O(H^2)$, by following our proof technique. This improves the pressure error estimate of \cite{He04}. To be precise, in \cite{He04}, to estimate the pressure error, $\|\e_t\|$ is used (see ($5.39$) from \cite{He04}). Taking advantage of conforming finite element, we have used $\|\e_t\|_{-1}$ instead. And we are awarded with a better pressure error estimate.
\end{remark}

{\small  e-mails: deepjyotig@gmail.com and pddamazio@ufpr.br}

\end{document}